\newtheorem{teor}{Theorem}
\newtheorem*{teor*}{Theorem}
\newtheorem{lemma}[teor]{Lemma}
\newtheorem{prop}[teor]{Proposition}
\newtheorem{corol}[teor]{Corollary}
\theoremstyle{definition}
\theoremstyle{remark}
\newtheorem{rmk}[teor]{Remark}
\newcommand{\Z}{\mathbb{Z}}
\newcommand{\Q}{\mathbb{Q}}
\newcommand{\R}{\mathbb{R}}
\newcommand{\N}{\mathbb{N}}
\newcommand{\ent}{\mathcal{O}}
\newcommand{\keywords}[1]{\noindent \textbf{Keywords:}\quad #1}
\newcommand{\ead}[1]{\noindent \textbf{Email adress:}\quad #1}
\newcommand{\msc}[1]{\textbf{2010 Mathematics Subject Classification:}\quad #1}
\newcommand\blfootnote[1]{\begingroup\renewcommand\thefootnote{}\footnote{#1}
	\addtocounter{footnote}{-1}\endgroup}
\title{Poincaré series of double coset representatives of Coxeter groups}
\author{Gianmarco Chinello}
\affil{Sapienza Università di Roma\\Dipartimento di Matematica G. Castelnuovo}
\date{}
\begin{document}
\maketitle
\begin{abstract}
	Let $(W,S)$ be a Coxeter system of finite rank and let $J,K\subset S$. 
	We study the rationality of the Poincaré series of the 
	set of representatives of minimal length of $(W_J,W_K)$-double cosets 
	of $W$: we conclude that it depends mostly on the rationality of the 
	Poincaré series of the normalizers of finite parabolic subgroups of $W$. 
	For affine Weyl groups, we prove that all these series are rational and we 
	give some explicit examples.
\end{abstract}
\blfootnote{\msc{20F55, 05E15}}
\blfootnote{\keywords{Coxeter groups, Poincaré series, Growth series, Double 
cosets.}}
\blfootnote{\ead{chinellogianmarco[at]gmail.com}}

\section*{Introduction}
The aim of this paper is to investigate the rationality of certain growth 
series (or Poincaré series) of some subsets of a Coxeter group. 

\medskip
Let $(W,S)$ be a Coxeter system of finite rank and let $\ell$ be its length 
function. 
For a subset $X$ of $W$, its \emph{Poincaré series} is 
\[X(t)=\sum_{x\in X}t^{\ell(x)}\in\Z[[t]].\]
It measures its growth relative to the generating set $S$.
For many subsets of $W$, this series is actually a rational function.
Actually, the well-know formula (see \cite[\S5.12]{Hum})
\begin{equation}\label{eq:Wt}
\sum_{J\subset S}(-1)^{|J|}\;\frac{W(t)}{W_J(t)}=
\left\{
\begin{array}{ll}
t^{\ell(w_0)}& \text{if } W \text{ is finite, }w_0 \text{ is the 
maximal length element in }W\\
0& \text{if } W \text{ is infinite}
\end{array}
\right.
\end{equation}
permits to prove that the 
Poincaré series of the group $W$ and of its parabolic 
subgroups $W_J=\langle J \rangle$ where $J\subset S$ are rational functions.
Moreover, if $W^J$ (resp. ${}^JW$) is the set of representatives of minimal 
length of left (resp. right) $W_J$-cosets of $W$, then 
\begin{equation}\label{eq:WJt}
W^J(t)={}^JW(t)=\frac{W(t)}{W_J(t)}
\end{equation}
and so it is a rational function.
In  \cite{Vis} we can find other examples of rational Poincaré series.

\medskip
The purpose of the present paper is to obtain generalizations of (\ref{eq:Wt})
and to study the rationality of the Poincaré series of the 
set ${}^JW^K={}^JW\cap W^K$ of representatives of minimal length of 
$(W_J,W_K)$-double cosets 
of $W$ where $J,K\subset S$.
The main difference from the case of a single coset, in which (\ref{eq:WJt}) 
holds, is that in our case there is not such a formula. 
So, even if we know that $W_J(t)$ and $W_K(t)$ are rational we can not conclude 
that ${}^JW^K(t)$ is rational.

\medskip
The idea is to consider the sets
\[p^{S'}_{Q,J,K}=\{x\in{}^J W_{S'}^K\,|\,K\cap 
x^{-1}Jx=Q\}\]
for every $Q,J,K\subset S'\subset S$. They form a partition of  ${}^JW_{S'}^K$ 
and so the study of rationality of ${}^JW_{S'}^K(t)$ is reduced to that of 
$p^{S'}_{Q,J,K}(t)$. 
Investigating the relations between these series, we simplify the problem 
obtaining the following theorem.
\begin{teor}
	The series $p^{S'}_{Q,J,K}(t)$ is a rational function for every 
	$Q,J,K\subset S'\subset S$ if and only if 
	$\{x\in	W_{S'}\,|\, xLx^{-1}=M\}(t)$ is 
	a rational function for every $L,M\subset S'$ with $W_L$ finite. 
\end{teor}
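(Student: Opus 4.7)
The plan is to establish the equivalence via a partition of each $p$-set together with a length-additive factorization through a finite parabolic subgroup. The starting observation is that for $x \in p^{S'}_{Q,J,K}$, the conjugate $L := xQx^{-1}$ is a subset of simple reflections contained in $J$: this is a consequence of the standard description of the parabolic $W_J \cap xW_Kx^{-1}$ for $x$ a minimal-length double coset representative, and one has $W_L \cong W_Q$. This matches the hypothesis ``$W_L$ finite'' in the statement and yields a partition
\[p^{S'}_{Q,J,K} = \bigsqcup_{L\subseteq J}p^{S'}_{Q,J,K}(L),\qquad p^{S'}_{Q,J,K}(L) := \{x\in p^{S'}_{Q,J,K}\mid xQx^{-1}=L\}\subseteq T^{S'}_{Q,L},\]
where $T^{S'}_{L,M}:=\{x\in W_{S'}\mid xLx^{-1}=M\}$ denotes the transporter.

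For the forward direction (``$p$-series rational $\Rightarrow$ transporter rational''), fix $L,M\subset S'$ with $W_L$ finite; the transporter is empty unless $W_M\cong W_L$ (both finite). Using the unique length-additive factorization $W_{S'}=W_M\cdot{}^MW_{S'}$, write each $x\in T^{S'}_{L,M}$ as $x=w\cdot y$ with $w\in W_M$, $y\in{}^MW_{S'}$, and $\ell(x)=\ell(w)+\ell(y)$. The double coset $W_MyW_L$ collapses to the single left coset $W_My$ (since $yW_Ly^{-1}=W_M$), forcing $y\in{}^MW_{S'}^L$; and the standard equality $M\cap yLy^{-1}=M$ combined with $|L|=|M|$ forces $yLy^{-1}=M$ as a set of simple reflections. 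Hence $y\in p^{S'}_{L,M,L}$, and one obtains
\[T^{S'}_{L,M}(t)=W_M(t)\cdot p^{S'}_{L,M,L}(t),\]
which is rational by hypothesis.

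For the backward direction, assume all transporter series (with $W_L$ finite) are rational and fix $Q,J,K\subset S'$. When $W_Q$ is finite I would carve $p^{S'}_{Q,J,K}(L)$ out of $T^{S'}_{Q,L}$ in two steps: first extract the ${}^JW_{S'}^K$ part by iterated length-additive factorizations through $W_L$ on the left and through $W_Q$ on the right, passing from minimality in $W_Lx$ and $xW_Q$ to minimality in $W_Jx$ and $xW_K$; then enforce the exact equality $K\cap x^{-1}Jx=Q$ by M\"obius inversion over the interval $\{Q'\mid Q\subseteq Q'\subseteq K\}$ in the subset lattice of $K$. Summing over $L\subseteq J$ with $W_L\cong W_Q$ gives rationality of $p^{S'}_{Q,J,K}(t)$. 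When $W_Q$ is infinite, an additional reduction via M\"obius inversion over subset lattices inside $S'$ expresses the series in terms of $p$-series for smaller ambient Coxeter systems, handled by induction on $|S'|$.

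The technical heart, and the main obstacle, is the interplay between the partition by $L=xQx^{-1}$---which places $x$ inside a transporter $T^{S'}_{Q,L}$ built from the small sets $Q$ and $L$---and the minimality/exactness conditions defining $p^{S'}_{Q,J,K}$, which refer to the larger parabolics $W_J$ and $W_K$. Making M\"obius inversion mesh with the length-additive factorization through $W_L$ is precisely what requires the hypothesis that $W_L$ is finite: only then does the factorization $W_{S'}=W_L\cdot{}^LW_{S'}$ translate to a product of Poincar\'e series. The infinite-$W_Q$ case in the backward direction requires the most delicate inductive argument, reducing eventually to the finite case.
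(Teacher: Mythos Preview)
Your forward direction is correct and matches the paper: when $|L|=|M|$ one has $p^{S'}_{L,M,L}=\{x\in W_{S'}\mid xLx^{-1}=M\}\cap{}^MW_{S'}$, whence $T^{S'}_{L,M}(t)=W_M(t)\cdot p^{S'}_{L,M,L}(t)$.

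The backward direction, however, has a real gap, and it is precisely the obstacle you name in your last paragraph. The transporter $T^{S'}_{Q,L}$ and its factorization $W_L\cdot p^{S'}_{Q,L,Q}$ involve only the \emph{small} parabolics $W_Q,W_L$, whereas the conditions $x\in{}^JW_{S'}^K$ and $K\cap x^{-1}Jx=Q$ involve the \emph{large} ones $W_J,W_K$. Your proposed step ``pass from minimality in $W_Lx$ to minimality in $W_Jx$'' has no mechanism: if you factor an element of $T^{S'}_{Q,L}$ further through $W_J$ on the left, the ${}^JW_{S'}$-part no longer lies in $T^{S'}_{Q,L}$, because conjugation by an element of $W_J$ does not preserve $L$ as a set of simple reflections. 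Likewise, your M\"obius inversion over $\{Q'\mid Q\subseteq Q'\subseteq K\}$ replaces the exactness condition by $xQ'x^{-1}\subseteq J$, but you are still left with series of sets $\{x\in{}^JW_{S'}^K\mid xQ'x^{-1}=L'\}$, which are not transporters because of the ambient ${}^JW^K$-constraint. The infinite-$W_Q$ case is only gestured at.

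The paper avoids this head-on computation by a different reduction. The bridge between the small $Q$ and the large $K$ is the longest element $w(K,Q)$ of $W_K^Q$: one shows that right-multiplying $p_{Q',J,K}$ by $w(K,Q')$ (where $Q'=w(K,Q)Qw(K,Q)^{-1}$) yields exactly the subset of $p_{Q,J,Q}$ with $K\setminus Q\subseteq D_R$. This produces an alternating-sum identity
\[
\sum_{Q\subseteq H\subseteq K}\sum_{Q\subseteq R\subseteq H}(-1)^{|H|-|Q|}\,p_{R,J,H}(t)=
\begin{cases}
t^{\ell(w(K,Q))}\,p_{Q',J,K}(t)&\text{if }W_K^Q\text{ is finite},\\
0&\text{otherwise},
\end{cases}
\]
which allows an induction on $|K|$ and then on $|K|-|Q|$ reducing every $p^{S'}_{Q,J,K}$ to the diagonal case $Q=K$; a dual longest-element identity then reduces the resulting $h$-series to the case $|J|=|K|$, where one is finally computing a genuine transporter. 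The infinite case drops out automatically (right-hand side zero), and Deodhar's description of $N_W(W_J)$ then reduces to spherical $L,M$. None of these longest-element identities appear in your sketch, and they are the missing ingredient.
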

\noindent
This result does not prove, in general, the rationality of 
${}^JW^K(t)$, but simplifies the question considerably. 
In particular, the problem is essentially reduced to the study of the 
rationality of the Poincaré series of the normalizers $N_W(W_L)$ of finite 
parabolic subgroups
$W_L$ of $W$ since $\{x\in	W\,|\, xLx^{-1}=L\}(t)=N_W(W_L)(t)/W_L(t)$.
A way to prove that $N_W(W_L)(t)$ is rational, would be to use the results in 
\cite{D82,BH} about the normalizers of parabolic subgroups.

\medskip
In the last part of this paper, we investigate the case of affine Weyl groups.
Let $(\mathcal{W},\mathcal{S})$ be a finite Coxeter system and let 
$\widetilde{\mathcal{W}}$ be the associated affine Weyl group. 
Using the fact that $\widetilde{\mathcal{W}}$ is the semidirect product of 
$\mathcal{W}$ with a normal subgroup, we 
prove the following theorem.
\begin{teor}
	The series $p_{Q,J,K}(t)$, ${}^J \widetilde{\mathcal{W}}^K(t)$ and 
	$N_{\widetilde{\mathcal{W}}}(\widetilde{\mathcal{W}}_J)(t)$ are rational 
	functions for every $Q,J,K\subset \mathcal{S}$.
\end{teor}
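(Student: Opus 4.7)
The approach is to apply the first theorem to the affine system $(\widetilde{\mathcal{W}}, \widetilde{\mathcal{S}})$. It reduces the rationality of $p_{Q,J,K}(t)=p^{\widetilde{\mathcal{S}}}_{Q,J,K}(t)$ to the rationality of $\{x \in \widetilde{\mathcal{W}}_{S'} : xLx^{-1}=M\}(t)$ for all $L,M \subset S' \subset \widetilde{\mathcal{S}}$ with $\widetilde{\mathcal{W}}_L$ finite. Since removing any node of the (connected) affine Dynkin diagram produces a finite-type diagram, $\widetilde{\mathcal{W}}_{S'}$ is already finite whenever $S' \subsetneq \widetilde{\mathcal{S}}$, and the corresponding series is then a polynomial; the only substantive case is $S' = \widetilde{\mathcal{S}}$.

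The key idea is to use the semidirect product $\widetilde{\mathcal{W}} = \mathcal{W} \ltimes T$, where $T$ is the translation lattice, and to write each element uniquely as $u\tau_\lambda$ with $u \in \mathcal{W}$ and $\lambda \in T$ (I use $\tau_\lambda$ for the translation by $\lambda$ to avoid clashing with the series variable $t$). A short calculation in the semidirect product gives $x s x^{-1} = (usu^{-1})\tau_{u(s^{-1}\lambda-\lambda)}$ for any $s \in \mathcal{W}$, so demanding $xLx^{-1} \subset \widetilde{\mathcal{S}} \cap \mathcal{W}$ forces $s\lambda = \lambda$ for all $s \in L$. For the case $L, M \subset \mathcal{S}$ this yields
\[
\bigl\{x \in \widetilde{\mathcal{W}} : xLx^{-1}=M\bigr\} = \bigl\{u\tau_\lambda : u \in \mathcal{W},\; uLu^{-1}=M,\; \lambda \in T^{\mathcal{W}_L}\bigr\},
\]
and the analogous computation yields $N_{\widetilde{\mathcal{W}}}(\widetilde{\mathcal{W}}_J) = N_{\mathcal{W}}(\mathcal{W}_J) \cdot T^{\mathcal{W}_J}$; the remaining cases where $L$ or $M$ contains the affine simple reflection $s_0$ are reduced to this one by first conjugating the finite parabolic $\widetilde{\mathcal{W}}_L$ inside $\widetilde{\mathcal{W}}$ to a standard parabolic of $\mathcal{W}$ and transferring the computation.

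The decisive step is then to verify that $\sum_{\lambda \in \Lambda} t^{\ell(u\tau_\lambda)}$ is a rational function in $t$, for each fixed $u \in \mathcal{W}$ and each sublattice $\Lambda = T^{\mathcal{W}_L}$ of $T$. The Iwahori--Matsumoto length formula writes $\ell(u\tau_\lambda) = \sum_{\alpha \in \Phi^+} \bigl|\langle \lambda,\alpha\rangle + \varepsilon(u,\alpha)\bigr|$ for suitable $\varepsilon(u,\alpha) \in \{0,1\}$; as a function of $\lambda$ this is piecewise $\Z$-linear, with domains of linearity equal to the intersections of $\Lambda \otimes_\Z \R$ with finitely many rational polyhedral cones. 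Ehrhart/Brion-style generating-function results for lattice points in rational polyhedra then give rationality of the series on each cone, and summing the finitely many pieces preserves rationality. Adding the rational series over the finite set of admissible $u$'s gives rationality of $\{x : xLx^{-1}=M\}(t)$ and of $N_{\widetilde{\mathcal{W}}}(\widetilde{\mathcal{W}}_J)(t)$. The first theorem then delivers rationality of every $p_{Q,J,K}(t)$, and rationality of ${}^J\widetilde{\mathcal{W}}^K(t)=\sum_Q p_{Q,J,K}(t)$ follows by the finite sum. I expect the principal obstacle to be precisely the piecewise-linear cone analysis: $\ell$ does not restrict linearly to $T^{\mathcal{W}_L}$, so one must perform the decomposition into cones of linearity and carry out the Ehrhart-type computation carefully, keeping track of the $\mathcal{W}$-factor throughout.
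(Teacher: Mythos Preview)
Your approach is sound but genuinely different from the paper's. The paper explicitly does \emph{not} invoke Theorem~1 for the affine case. Instead it reduces $p_{Q,J,K}(t)$ (with $J,K\subset\mathcal{S}$) first to $p_{Q',J,\mathcal{S}}(t)$ via Proposition~\ref{prop:razionale2}, and then to $p_{Q'',\mathcal{S},\mathcal{S}}(t)$ via a direct analysis of ${}^{\mathcal{S}}\widetilde{\mathcal{W}}^{\mathcal{S}}$ (Corollary~\ref{corol:razionale2}). The point is that every $x\in{}^{\mathcal{S}}\widetilde{\mathcal{W}}^{\mathcal{S}}$ has the shape $w_0w_Q\lambda$ with $\lambda$ a \emph{dominant} translation and $C_{\mathcal{S}}(\lambda)=Q$ (Proposition~\ref{prop:pQSS}); consequently $p_{Q,\mathcal{S},\mathcal{S}}(t)$ is, up to a monomial factor, the generating function of the integer points in a \emph{single} open rational simplicial cone (formulas (\ref{eq:pQSS})--(\ref{eq:fQ}) and Lemma~\ref{lemma:fQ2}). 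This buys explicit closed formulas, as exploited in Section~\ref{sec:examples}. Your route through Theorem~1 and a piecewise-linear Iwahori--Matsumoto analysis is more conceptual and would in principle cover $J,K\subset\widetilde{\mathcal{S}}$ as well, at the cost of decomposing into many cones and losing the tidy expressions.

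One step in your outline needs care. Your ``reduction by conjugation'' for the case $s_0\in L$ or $s_0\in M$ does not transfer the Poincar\'e series: if $gLg^{-1}=L'\subset\mathcal{S}$ then $\{x:xLx^{-1}=M\}=\{y:yL'y^{-1}=M\}\cdot g$, but $\ell(yg)\neq\ell(y)+\ell(g)$ in general, so the series for $L$ is not a monomial times the series for $L'$. This is easily repaired in two ways. First, the direct semidirect-product analysis you give for $L,M\subset\mathcal{S}$ works verbatim in the general case: writing $s=r_s\tau_{\nu_s}$ for each $s\in L$, the condition $xLx^{-1}=M$ still forces $u$ into a finite subset of $\mathcal{W}$ and, for each admissible $u$, confines $\lambda$ to a coset of a sublattice; your cone argument then applies unchanged. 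Second, and more to the point for the theorem as stated, since $J,K\subset\mathcal{S}$ you may bypass the full equivalence of Theorem~1 and invoke Propositions~\ref{prop:QJK} and~\ref{prop:KJK} directly: these only require rationality of $h_{H,H,K}(t)=\{x:xKx^{-1}=H\}(t)$ for $H,K\subset\mathcal{S}$, so the $s_0$ case never arises.
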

\noindent
Furthermore, we give an 
explicit method to calculate these rational functions and we give some examples.

\medskip
An important consequence of Theorem 2 will appear in a later paper 
\cite{CCW}. 
We summarize it briefly: let $F$ be a local field with residue field of 
cardinality $q=p^f$, let $G$ be the group of $F$-points of a split semisimple 
simply connected algebraic group over $F$ and let $\widetilde{\mathcal{W}}$ be 
the associated affine Weyl group. 
We can associate to some open compact subgroup $\ent$ of $G$, the 
$\zeta$-function
$$\zeta_{G,\ent}(s)=\sum_{x\in \ent\backslash G/\ent}[\ent:\ent\cap x\ent 
x^{-1}]^{-s}$$ where $s\in\mathbb{C}$.
In \cite{CCW}, we found an explicit expression of this series as a 
function of $p_{Q,J,K}(q^{-s})$ and, using Theorem 2, it turns 
out to be meromorphic (and holomorphic if $\ent$ is a pro-$p$-group).
Furthermore, using this expression, we prove that $\zeta_{G,\ent}(-1)$ is 
closely related with the Euler-Poincaré measure (or characteristic) of $G$ (see 
\cite[\S3]{Serre} for the definition). 
Other evidences of the connection between values of $\zeta$-functions and 
Euler-Poincaré characteristics can be found in \cite{Serre, Harder, Brown00} 
and in \cite[Ch.IX \S8]{Brown82}.

\medskip
The paper is organized as follows: in section \ref{sec:Coxeter} we introduce 
some preliminaries about Coxeter systems. 
Section \ref{section:mainthm} is devoted to
the proof of the main theorem (Theorem \ref{teor:equivalenze}) in the case of 
general Coxeter groups of finite rank.
In section \ref{section:coni} we recall some results about rational cones in 
order to use them in section \ref{section:affine} where we investigate the case 
of affine Weyl groups, and we prove that $p_{Q,J,K}(t)$, ${}^J 
\widetilde{\mathcal{W}}^K(t)$ and 
$N_{\widetilde{\mathcal{W}}}(\widetilde{\mathcal{W}}_J)(t)$ are rational 
functions whenever $Q,J, K\subset 
\mathcal{S}$ (Theorem \ref{teor:riassunto}).
Finally, in section \ref{sec:examples}, we give some explicit examples of the 
computation of these rational functions.

\numberwithin{teor}{section}
\numberwithin{equation}{section}

\section{Coxeter systems}\label{sec:Coxeter}
We recall that a \emph{Coxeter group} $W$ is a group with a presentation
\[W=\langle s\in S\,|\, (st)^{m(s,t)}=1 \text{ for every }s,t\in 
S\rangle\]
where $m(s,t)\in \Z_{>0}\cup\{\infty\}$, $m(s,t)=m(t,s)$ and $m(s,t)=1$ if and 
only if $s=t$. We call \emph{Coxeter system} the pair $(W,S)$ and \emph{rank} 
of $W$ the cardinality of $S$. 

\medskip
Throughout this section, let $(W,S)$ be a Coxeter system of finite rank.
The group $W$ can be represented faithfully as a reflection group on a real 
vector space $V$ with basis $\Sigma$ in bijective correspondence with $S$. We 
write $s_\alpha\in S$ the generator of $W$ corresponding to $\alpha\in\Sigma$, 
while if $J\subset S$ we write $\hat{J}$ for the corresponding subset of 
$\Sigma$.
The set $\bm\Phi=W\Sigma$ is called \emph{root system} of $W$ and we have 
$\bm\Phi=\bm\Phi^+\sqcup \bm\Phi^-$ where the elements of $\bm\Phi^+$ are 
nonnegative linear combinations of elements of $\Sigma$ and 
$\bm\Phi^-=-\bm\Phi^+$.

\medskip
Let $\ell:W\longrightarrow \N$ be the length function of $W$ with respect to 
$S$. For every $w\in W$ we have $\ell(w)=|N(w)|$ where $N(w)=\bm\Phi^+\cap 
w^{-1}\bm\Phi^-=\{\alpha\in\bm\Phi^+\,|\,w\alpha\in\bm\Phi^-\}$. In particular, 
we have 
\[
\ell(ws_\alpha)=\left\{
\begin{array}{ll}
\ell(w)-1 & \text{if } \alpha\in N(w)\\
\ell(w)+1 & \text{if } \alpha\notin N(w)
\end{array}
\right.
\]
for every $w\in W$ and $\alpha\in\Sigma$. The following is a well known result.

\begin{lemma}\label{lemma:rightdivisor}
	Let $u,w\in W$. Then $\ell(wu)=\ell(w)-\ell(u)$ if and only if 
	$N(u^{-1})\subset N(w)$.
\end{lemma}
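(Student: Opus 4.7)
The plan is to compute $N(wu)$ directly in terms of $N(w)$ and $N(u^{-1})$, deduce a length formula, and then read off the equivalence.

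First I would fix $\alpha\in\bm\Phi^+$ and split into cases according to the sign of $u\alpha$. If $u\alpha\in\bm\Phi^+$ (that is, $\alpha\notin N(u)$), then $\alpha\in N(wu)$ iff $u\alpha\in N(w)$. If instead $u\alpha\in\bm\Phi^-$ (that is, $\alpha\in N(u)$), then $\alpha\in N(wu)$ iff $-u\alpha\notin N(w)$. This gives a partition of $N(wu)$ into two explicit pieces.

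Next I would translate both pieces to the positive side via $u$. The map $\alpha\mapsto u\alpha$ is a bijection $\bm\Phi^+\setminus N(u)\to\bm\Phi^+\setminus N(u^{-1})$, while $\alpha\mapsto -u\alpha$ is a bijection $N(u)\to N(u^{-1})$ (a standard check using $N(u^{-1})=\bm\Phi^+\cap u\bm\Phi^-$). Applying these bijections rewrites the two pieces as $N(w)\setminus N(u^{-1})$ and $N(u^{-1})\setminus N(w)$ respectively, so
\[
\ell(wu)=|N(wu)|=|N(u^{-1})\triangle N(w)|=\ell(u)+\ell(w)-2\,|N(u^{-1})\cap N(w)|,
\]
using $\ell(u)=\ell(u^{-1})=|N(u^{-1})|$.

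From this formula the claim is immediate: $\ell(wu)=\ell(w)-\ell(u)$ is equivalent to $|N(u^{-1})\cap N(w)|=\ell(u)=|N(u^{-1})|$, which by finiteness of $N(u^{-1})$ means exactly $N(u^{-1})\subset N(w)$.

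The only delicate point is keeping track of the sign bookkeeping when translating the two cases through $u$; once the bijections $\alpha\mapsto u\alpha$ and $\alpha\mapsto -u\alpha$ are set up correctly, everything else is a one-line count. An alternative route would be induction on $\ell(u)$, writing $u=u's_\alpha$ with $\ell(u)=\ell(u')+1$ and invoking the displayed length-change formula for simple reflections together with $N(u^{-1})=\{u'\hat\alpha\}\sqcup u'N((u')^{-1})u'^{-1}$-type identities, but the direct $N$-set computation is shorter and avoids the induction hypothesis entirely.
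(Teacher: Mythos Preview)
Your argument is correct. The case split on the sign of $u\alpha$, the two bijections $\alpha\mapsto u\alpha$ and $\alpha\mapsto -u\alpha$, and the resulting identification of $|N(wu)|$ with $|N(w)\triangle N(u^{-1})|$ all check out; the final count $\ell(wu)=\ell(w)+\ell(u)-2|N(w)\cap N(u^{-1})|$ gives the equivalence immediately.

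There is nothing to compare against: the paper does not prove this lemma at all, introducing it only as ``a well known result''. Your direct $N$-set computation is a standard and clean way to establish it, and the formula you derive (the symmetric-difference length identity) is in fact stronger than what is stated, since it handles all $u,w$ at once rather than just the extremal case $\ell(wu)=\ell(w)-\ell(u)$. The inductive alternative you mention at the end would also work but, as you note, is less economical here.
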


For every $w\in W$, let
\[A_L(w):=\{s\in S\,|\,\ell(sw)>\ell(w)\} \quad \text{and}\quad A_R(w):=\{s\in 
S\,|\,\ell(ws)>\ell(w)\}\]
be the left and right \emph{ascent set} of $w$ and let $D_L(w):=S\setminus 
A_L(w)$ and 
$D_R(w):=S\setminus A_R(w)$ be the left and right \emph{descent set} of $w$.

\medskip 
For every $J\subset S$, the subgroup $W_J$ of $W$ generated by $J$ is called 
\emph{(standard) parabolic subgroup} of $W$. Then $(W_J,J)$ is a Coxeter system 
and we denote $\bm\Phi_J=W_J\hat{J}$ the 
corresponding root system. 
We consider the sets
\begin{align*}
	{}^JW&:=\{w\in W\,|\,J\subset A_L(w)\}=\{w\in W\,|\,\ell(sw)>\ell(w) 
	\,\forall\, 
	s\in J\},\\
	W^J&:=\{w\in W\,|\,J\subset A_R(w)\}=\{w\in W\,|\,\ell(ws)>\ell(w) 
	\,\forall\, 
	s\in J\}.
\end{align*}
We remark that $w\in {}^JW$ if and only if $w^{-1}\in W^J$.
According to \cite[\S2.3.2]{AB}, the elements of ${}^JW$ (resp. $W^J$) 
are the representatives of minimal length of right (resp. left) $W_J$-cosets of 
$W$. 
Furthermore, every element $w\in W$ can be written uniquely as 
$w=w_1w_2=w_3w_4$ with $w_1,w_4\in W_J$, $w_2\in {}^JW$ and $w_3\in W^J$ and we 
have
\begin{equation}\label{eq:lunghezze}
	\ell(w)=\ell(w_1)+\ell(w_2)=\ell(w_3)+\ell(w_4).
\end{equation}

A subset $K\subset S$ is said to be \emph{spherical} if $W_K$ is finite (or 
equivalently if $\bm\Phi_K$ is finite) and it is said to be \emph{irreducible} 
if $W_K$ cannot be nontrivially expressed as a direct product of smaller 
standard parabolic subgroups.
An arbitrary parabolic subgroup $W_J$ of $W$ can be written as a direct product 
of parabolic subgroups corresponding to irreducible subsets of $J$; we call 
these subsets the \emph{components} of $J$.

\medskip
For every $J,K\subset S$ we denote $J\perp K$ if $J\cap K=\emptyset$ and 
$jk=kj$ for every $j\in J$ and $k\in K$. We also denote $J^\perp=\{s\in 
S\setminus J\,|\,sj=js\;\forall j\in J\}$.
Clearly, if $J_1$ and $J_2$ are two components of $J$, then $J_1\perp J_2$.

\medskip
For every $X,Y\subset W$, we denote by  
$C_X(Y)=\{x\in X|\,xy=yx\;\forall\,y\in Y \}$ 
the centralizer of $Y$ in $X$ and by $N_X(Y)=\{x\in X|\, xY=Yx\}$ 
the normalizer of $Y$ in $X$.

\begin{rmk}\label{rmk:wHJ}
If $K\subset S$ is spherical, there exists a unique element $w_K$ of maximal 
length in $W_K$ and $N(w_K)=\bm\Phi_K^+$. 
More generally, if we take $J\subset H\subset S$ such that $W_H^{J}$ is finite 
(or equivalently $\bm\Phi_H\setminus \bm\Phi_J$ is finite), 
then there exists a unique element $w(H,J)\in W_H^J$ of maximal 
length: if $H_1$ is the union of components of $H$ 
which intersect $H\setminus J$ and $J_1=J\cap H_1$ then $H_1$ is spherical and 
$w(H,J)=w(H_1,J_1)=w_{H_1}w_{J_1}$ (see \cite{BH}).
In particular, we have $N(w(H,J))=\bm\Phi_H^+\setminus \bm\Phi_J^+$, 
$J'=w(H,J)Jw(H,J)^{-1}$ is a subset of $H$, 
$w(H,J)^{-1}=w(H,J')$,
$D_R(w(H,J))=H\setminus J$ and 
$D_L(w(H,J))=H\setminus J'$.
\end{rmk}

\subsection{Double cosets}
Throughout this paragraph, let $J,K\subset S$. We state some properties about 
the set of representatives of minimal length 
of $(W_J,W_K)$-double cosets of $W$.

\medskip
The elements of ${}^JW^K:={}^JW\cap W^K$ form a set of representatives 
of $(W_J,W_K)$-double cosets of $W$ and, by \cite[Proposition 2.23]{AB}, every 
$x\in 
{}^JW^K$ is the unique element of 
minimal length in $W_JxW_K$. 
Furthermore, if $x\in {}^JW^K$ then by \cite[Lemma 2.25]{AB} we have
\begin{equation}\label{eq:lemma2.25}
	W_K\cap x^{-1} W_J x=W_{K\cap x^{-1} J x} .
\end{equation}

\begin{lemma}\label{lemma:compatibilita}\mbox{}
	\begin{enumerate}[(a)]
		\item If $w\in {}^JW$ and $s\in S$ then there are three possibilities:
		\begin{enumerate}[(i)]
			\item $\ell(ws)=\ell(w)+1$ and $ws\in{}^JW$;
			\item $\ell(ws)=\ell(w)+1$ and $ws=s'w$ with $s'\in J$;
			\item $\ell(ws)=\ell(w)-1$. In this case we have $ws\in{}^J W$.
		\end{enumerate}
		\item If $x\in {}^JW$, $v\in W_J$ and $H\subset J$ then $vx\in {}^HW$ 
		if and only if $v\in{}^HW$.
		\item If $x\in {}^JW^K$ and $u\in W_K$ then $xu\in {}^JW$ if and only 
		if $u\in{}^{K\cap x^{-1}Jx}W$.
	\end{enumerate}
\end{lemma}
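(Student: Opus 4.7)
My plan is to dispatch the three parts in order, with (b) essentially immediate from length additivity, (a) requiring a short root-theoretic argument to separate cases (i) and (ii), and (c) reducing via (\ref{eq:lemma2.25}) to a root computation combined with a standard inversion-set fact.

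For (a), I first handle the case $\ell(ws)=\ell(w)-1$: using $\ell(s'w)=\ell(w)+1$ for every $s'\in J$ (from $w\in{}^JW$) and the fact that $\ell(s'ws)$ differs from both $\ell(s'w)$ and $\ell(ws)$ by exactly $1$, the only consistent value is $\ell(s'ws)=\ell(w)=\ell(ws)+1$, so $ws\in{}^JW$, giving case (iii). When $\ell(ws)=\ell(w)+1$, either no $s'\in J$ causes $\ell(s'ws)<\ell(ws)$, which is case (i), or there exists such an $s'$; in the latter subcase I translate the condition to the root picture: $\ell(s'ws)<\ell(ws)$ means $sw^{-1}\alpha_{s'}\in\bm\Phi^-$. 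Since $w\in{}^JW$ forces $w^{-1}\alpha_{s'}\in\bm\Phi^+$, the only positive root that the simple reflection $s$ sends to a negative one is $\alpha_s$ itself, so $w^{-1}\alpha_{s'}=\alpha_s$. This yields $s'=wsw^{-1}\in J$ and $ws=s'w$, which is case (ii).

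For (b), since $v,sv\in W_J$ for every $s\in H\subset J$ and $x\in{}^JW$, equation (\ref{eq:lunghezze}) gives the two length additivities $\ell(vx)=\ell(v)+\ell(x)$ and $\ell(svx)=\ell(sv)+\ell(x)$. Subtracting, $\ell(svx)>\ell(vx)$ iff $\ell(sv)>\ell(v)$, and intersecting over $s\in H$ yields the stated equivalence.

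For (c), set $K'=K\cap x^{-1}Jx$, so $W_{K'}=W_K\cap x^{-1}W_Jx$ by (\ref{eq:lemma2.25}). I translate $xu\in{}^JW$ into the root condition that $u^{-1}\beta_s\in\bm\Phi^+$ for every $s\in J$, where $\beta_s:=x^{-1}\alpha_s\in\bm\Phi^+$ (using $x\in{}^JW$). Split each $s\in J$ by whether $\beta_s$ lies in $\bm\Phi_K$: if $\beta_s\notin\bm\Phi_K$, then $\beta_s\in\bm\Phi^+\setminus\bm\Phi_K^+$, a set preserved by $u\in W_K$, so the condition $u^{-1}\beta_s>0$ is automatic; if $\beta_s\in\bm\Phi_K^+$, then $s_{\beta_s}=x^{-1}sx\in W_K\cap x^{-1}W_Jx=W_{K'}$, forcing $\beta_s\in\bm\Phi_{K'}^+$. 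Conversely, every simple root $\alpha_{s'}\in\hat{K'}$ arises as $\beta_t$ for $t=xs'x^{-1}\in J$. Hence $xu\in{}^JW$ is equivalent to $u^{-1}\gamma\in\bm\Phi^+$ for every $\gamma$ in a set sandwiched between $\hat{K'}$ and $\bm\Phi_{K'}^+$, and I conclude by invoking the standard fact that $u\in{}^{K'}W$ iff $u^{-1}(\bm\Phi_{K'}^+)\subset\bm\Phi^+$ iff $u^{-1}(\hat{K'})\subset\bm\Phi^+$. The main obstacle is precisely this last equivalence: without it, the root-theoretic translation only pins down the condition on an intermediate root subset, and the argument needs the classical extension from simple roots of $K'$ to all of $\bm\Phi_{K'}^+$ to identify it with membership in ${}^{K'}W$.
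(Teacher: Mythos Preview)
Your proof is correct. Part (b) matches the paper exactly, but parts (a) and (c) follow genuinely different routes.

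For (a), the paper simply cites properties (E) and (F) from \cite{AB}; you instead give a self-contained argument, using a parity/length-constraint count for case (iii) and the root-theoretic observation that $s$ negates only $\alpha_s$ to pin down case (ii). This is slightly longer but makes the lemma independent of the external reference.

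For (c), the paper argues by induction on $\ell(u)$: one direction is immediate (if $s\in Q=K\cap x^{-1}Jx$ with $\ell(su)<\ell(u)$, then $s'=xsx^{-1}\in J$ shortens $xu$), and the converse is obtained by writing $u=u's$ with $\ell(u')<\ell(u)$, applying the induction hypothesis to $u'$, and then using part (a) to rule out the possibility $xu\notin{}^JW$. Your approach bypasses the induction entirely: you translate $xu\in{}^JW$ to the positivity of $u^{-1}(x^{-1}\hat{J})$, split $x^{-1}\hat{J}$ according to membership in $\bm\Phi_K$, observe that the part outside $\bm\Phi_K$ is handled automatically since $W_K$ permutes $\bm\Phi^+\setminus\bm\Phi_K^+$, and show the remaining part is sandwiched between $\hat{K'}$ and $\bm\Phi_{K'}^+$. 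The step you flag as the main obstacle---that $u\in{}^{K'}W$ iff $u^{-1}\hat{K'}\subset\bm\Phi^+$ iff $u^{-1}\bm\Phi_{K'}^+\subset\bm\Phi^+$---is indeed the standard characterization of minimal coset representatives (equivalent to $N(u^{-1})\cap\bm\Phi_{K'}=\emptyset$), so invoking it is legitimate. Your argument is more conceptual and avoids the case analysis inherent in the inductive use of (a); the paper's proof, on the other hand, stays entirely within the length-function formalism and does not need the fact that $W_K$ stabilizes $\bm\Phi^+\setminus\bm\Phi_K^+$.
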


\begin{proof}
	The point $(a)$ is a consequence of properties (E) and (F) at page 79 of 
	\cite{AB}, while $(b)$ follows by (\ref{eq:lunghezze}) since 
	$\ell(svx)-\ell(vx)=\ell(sv)-\ell(v)$ for every $s\in H$.
	It remains to prove $(c)$. We denote $Q=K\cap x^{-1}Jx$. If there exists 
	$s=x^{-1}s'x\in Q$ 	such that $\ell(su)<\ell(u)$ then 
	$\ell(s'xu)=\ell(xsu)=\ell(x)+\ell(su)<\ell(xu)$.
	On the other hand, we prove by induction on $\ell(u)$ that 
	$u\in{}^{Q}W$ implies $xu\in {}^JW$. If $\ell(u)=1$ then $u$ 
	belong to $K$ but not to $x^{-1}Jx$ and so by $(a)$ we have $xu\in {}^JW$.
	Now, let $u\in{}^{Q}W$ with $\ell(u)>1$ and let $u'=us\in W_K$ with 
	$\ell(u')<\ell(u)$. 
	Then by $(a)$ we have $u'\in {}^{Q}W$ and so by induction 
	hypothesis $xu'\in {}^JW$. Now, if we suppose by contradiction that 
	$xu's\notin {}^JW$, then by $(a)$ there exists 
	$s'\in J$ such that $xu's=s'xu'$. We obtain $y=x^{-1}s'x\in W_K\cap 
	x^{-1}W_Jx=W_Q$ and $\ell(yu)=\ell(u')<\ell(u)$ which contradicts $u'\in 
	{}^{Q}W$.
\end{proof}

\begin{lemma}\label{lemma:decwtilde}
	Every element $w$ of $W$ can be written in a unique way as $w=yxz$ with 
	$y\in W_J$, $x\in{}^JW^K$ and  $z\in {}^{K\cap x^{-1}Jx}W_K$. Moreover, we 
	have $\ell(w)=\ell(y)+\ell(x)+\ell(z)$.
\end{lemma}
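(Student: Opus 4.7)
The plan is to combine the standard $(W_J,W_K)$-double coset decomposition with the single-coset decomposition from (\ref{eq:lunghezze}), using the identification $W_K\cap x^{-1}W_Jx=W_Q$ from (\ref{eq:lemma2.25}), where $Q=K\cap x^{-1}Jx$.

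For existence, I would start with the fact, recalled just after (\ref{eq:lemma2.25}), that every element of $W$ lies in a unique double coset $W_J x W_K$ with $x\in{}^JW^K$. Writing $w=axb$ with $a\in W_J$ and $b\in W_K$, I would apply (\ref{eq:lunghezze}) to the Coxeter system $(W_K,K)$ and the parabolic subgroup $W_Q$ to decompose $b=vz$ with $v\in W_Q$ and $z\in{}^QW_K$. Because $v\in W_Q\subseteq x^{-1}W_Jx$ by (\ref{eq:lemma2.25}), there exists $y'\in W_J$ with $xv=y'x$, so setting $y:=ay'\in W_J$ gives the required factorization $w=yxz$.

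For uniqueness, suppose $w=y_1x_1z_1=y_2x_2z_2$ with $y_i\in W_J$, $x_i\in{}^JW^K$ and $z_i\in{}^{Q_i}W_K$ where $Q_i=K\cap x_i^{-1}Jx_i$. Since the $W_JxW_K$ partition $W$, we get $x_1=x_2=:x$ and hence $Q_1=Q_2=:Q$. Rearranging yields $y_2^{-1}y_1\,x=x\,z_2z_1^{-1}$, so $z_2z_1^{-1}=x^{-1}(y_2^{-1}y_1)x\in W_K\cap x^{-1}W_Jx=W_Q$ by (\ref{eq:lemma2.25}). Thus $z_1$ and $z_2$ lie in the same left $W_Q$-coset of $W_K$; being both in ${}^QW_K$, they are the minimal-length representative of that coset, hence $z_1=z_2$, and then $y_1=y_2$ follows by cancellation.

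For the length identity, the key observation is that $xz\in{}^JW$ by Lemma \ref{lemma:compatibilita}(c), so applying (\ref{eq:lunghezze}) to the factorization $w=y\cdot(xz)$ with $y\in W_J$ gives $\ell(w)=\ell(y)+\ell(xz)$. On the other hand, $x\in W^K$ means $x$ is of minimal length in $xW_K$, so applying (\ref{eq:lunghezze}) once more to $xz\in xW_K$ forces the $W^K$-part to be $x$ and yields $\ell(xz)=\ell(x)+\ell(z)$. Combining the two gives $\ell(w)=\ell(y)+\ell(x)+\ell(z)$.

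I expect no serious obstacle; the only subtle point is handling uniqueness cleanly, and the engine that makes this work is (\ref{eq:lemma2.25}), which converts the a priori complicated stabilizer $W_K\cap x^{-1}W_Jx$ into the standard parabolic $W_Q$ and thereby allows the single-coset decomposition (\ref{eq:lunghezze}) inside $W_K$ to control the $z$-factor.
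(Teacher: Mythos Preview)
Your proof is correct and follows essentially the same route as the paper: both use the double coset partition to fix $x$, reduce via (\ref{eq:lemma2.25}) to the parabolic $W_Q$ inside $W_K$, and then invoke Lemma \ref{lemma:compatibilita}(c) for the length additivity. The only cosmetic difference is that the paper compresses your existence step into the single line $W_JxW_K=W_JxW_Q\,{}^QW_K=W_Jx\,{}^QW_K$, whereas you spell out the absorption of $v\in W_Q$ explicitly.
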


\begin{proof}
	Let $w\in W$. Since  
	\[W=\bigsqcup_{x\in{}^JW^K}W_JxW_K,\]
	there exists a unique $x\in{}^JW^K$ such that $w\in W_JxW_K$. If we denote 
	$Q=K\cap x^{-1}Jx$, we have
	$W_JxW_K=W_JxW_Q{}^QW_K=W_Jx{}^QW_K$.	
	Now, if $w=y_1xz_1=y_2xz_2$ with $y_1,y_2\in W_J$ and 
	$z_1,z_2\in {}^{Q}W_K$, then the element
	$z_2z_1^{-1}=x^{-1}y_2^{-1}y_1x$ belongs to $W_K\cap x^{-1} W_J x=W_{Q}$ 
	and so $z_1=z_2$. It also follows that $y_1=y_2$.
	Now, by Lemma \ref{lemma:compatibilita}$(c)$ we have $xz_1\in {}^JW$ and 
	then
	$\ell(w)=\ell(y_1xz_1)=\ell(y_1)+\ell(xz_1)=\ell(y_1)+\ell(x)+\ell(z_1)$.
\end{proof}

One has to be careful, because $\ell(yxz)=\ell(y)+\ell(x)+\ell(z)$ 
does not hold for 
every $y\in W_J$, $x\in {}^JW^K$ and $z\in W_K$ (see Lemma 
\ref{lemma:compatibilita}$(c)$).

\section{The main theorem}\label{section:mainthm}
Throughout this section, let $(W,S)$ be a Coxeter system of finite rank.
We introduce some subsets of 
$W$ that form a partition of 
${}^J W^K$ and whose Poincaré series will allow us to investigate the 
rationality of ${}^J W^K(t)$.

\medskip 
For every $Q,J,K\subset S'\subset S$ we define the set
\begin{equation}\label{eq:defpQJK}
	p^{S'}_{Q,J,K}=\{x\in{}^J W_{S'}^K\,|\,K\cap 
	x^{-1}Jx=Q\}=\{x\in{}^J W_{S'}^K\,|\,\hat{K}\cap 
	x^{-1}\hat{J}=\hat{Q}\}
\end{equation}
and we write $p^{S}_{Q,J,K}=p_{Q,J,K}$. We are interested in the Poincaré 
series $p^{S'}_{Q,J,K}(t)$.

\medskip
We list some properties of these objects.
\begin{itemize}
	\item if $Q\not\subset K$ then $p^{S'}_{Q,J,K}=\emptyset$;
	\item if $|J|<|Q|$ then $p^{S'}_{Q,J,K}=\emptyset$;
	\item if $K=S'$ then $p^{S'}_{Q,J,S'}$ is equal to $\{1\}$ if 
	$Q=J$ and $\emptyset$ otherwise;
	\item if $J=S'$ then $p^{S'}_{Q,S',K}$ is equal to $\{1\}$ if 
	$Q=K$ and $\emptyset$ otherwise;
	\item if $K=\emptyset$ then $p^{S'}_{Q,J,\emptyset}$ is equal to 
	${}^JW_{S'}$ if $Q=\emptyset$ and $\emptyset$ otherwise;
	\item if $J=\emptyset$ then $p^{S'}_{Q,\emptyset,K}$ is equal to 
	$W^K_{S'}$ if $Q=\emptyset$ and $\emptyset$ otherwise;
	\item if $Q=J=K$ then 
	$p^{S'}_{K,K,K}=
	\{x\in{}^KW_{S'}^K\,|\,x^{-1}W_Kx=W_K\}=
	W_{S'}^K\cap 	N_{W_{S'}}(W_K)$
	and so \[\displaystyle{p^{S'}_{K,K,K}(t)= 
	\frac{N_{W_{S'}}(W_K)(t)}{W_K(t)}};\]
	\item by definition we have 
	\begin{equation}\label{eq:JWK}
		{}^J W_{S'}^K=\bigsqcup_{Q\subset K}p^{S'}_{Q,J,K};
	\end{equation} 
	\item by Lemma 
	\ref{lemma:decwtilde} we have
	\begin{equation}\label{eq:Wtilde}
		W_{S'}(t)=\sum_{x\in {}^JW_{S'}^K} 
		\frac{W_J(t)W_K(t)}{W_{K\cap x^{-1}Jx}(t)}\;t^{\ell(x)}
		=\sum_{Q\subset K}\frac{W_J(t)W_K(t)}{W_{Q}(t)}\;p^{S'}_{Q,J,K}(t).
	\end{equation}	
\end{itemize}

In order to study the rationality of $p^{S'}_{Q,J,K}(t)$, in the following 
paragraphs we will find some relations between these series (see Proposition 
\ref{prop:razionale2} and Corollary \ref{coroll:riduzione}). 

\subsection{First relation}

\begin{lemma}\label{lemma:razionale1}
	Let $J,K,K'\subset S'\subset S$ with $K\subset K'$.
	Then every element $x\in {}^JW_{S'}^K$ can be written in a unique way as 
	$x=yz$ with $y\in{}^JW_{S'}^{K'}$ and $z\in {}^{K'\cap y^{-1}Jy}W_{K'}^K$. 
\end{lemma}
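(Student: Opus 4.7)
The plan is to obtain the decomposition by applying Lemma \ref{lemma:decwtilde} to the Coxeter system $(W_{S'},S')$ with parabolic subgroups $W_J$ and $W_{K'}$, and then to show that the leftmost factor is trivial and that the rightmost factor actually lies in ${}^{K'\cap y^{-1}Jy}W_{K'}^K$ rather than just in ${}^{K'\cap y^{-1}Jy}W_{K'}$.

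More precisely, given $x\in{}^JW_{S'}^K$, Lemma \ref{lemma:decwtilde} applied inside $W_{S'}$ to the pair $(J,K')$ produces a unique decomposition $x=y'yz$ with $y'\in W_J$, $y\in{}^JW_{S'}^{K'}$ and $z\in{}^{K'\cap y^{-1}Jy}W_{K'}$, together with additivity of lengths. The first step is to show that $y'=1$. For this, I would apply Lemma \ref{lemma:compatibilita}$(c)$ (taking the pair $(J,K')$ there): since $z\in{}^{K'\cap y^{-1}Jy}W_{K'}$ and $y\in{}^JW_{S'}^{K'}$, one gets $yz\in{}^JW$. But $x\in{}^JW_{S'}$ as well, so the uniqueness of the decomposition $W=W_J\cdot{}^JW$ forces $y'=1$, which gives $x=yz$.

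The second step is to upgrade $z\in{}^{K'\cap y^{-1}Jy}W_{K'}$ to $z\in{}^{K'\cap y^{-1}Jy}W_{K'}^K$, that is, to check the right descent condition $K\subset A_R(z)$. Because $y\in W_{S'}^{K'}$ is a minimal length left coset representative and $z,zs\in W_{K'}$ for $s\in K\subset K'$, the identity (\ref{eq:lunghezze}) gives $\ell(yz)=\ell(y)+\ell(z)$ and $\ell(yzs)=\ell(y)+\ell(zs)$. Comparing these, $\ell(xs)-\ell(x)=\ell(zs)-\ell(z)$, so the hypothesis $x\in W_{S'}^K$ (i.e.\ $K\subset A_R(x)$) transfers directly to $K\subset A_R(z)$.

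For uniqueness, suppose $x=y_1z_1=y_2z_2$ with $y_i\in{}^JW_{S'}^{K'}$ and $z_i\in W_{K'}$. Then $x$ lies in both $W_Jy_1W_{K'}$ and $W_Jy_2W_{K'}$; since ${}^JW_{S'}^{K'}$ is a set of representatives of $(W_J,W_{K'})$-double cosets in $W_{S'}$, these double cosets coincide and $y_1=y_2$, whence $z_1=z_2$. The whole argument is essentially bookkeeping using Lemmas \ref{lemma:compatibilita} and \ref{lemma:decwtilde}; the only mildly delicate point is the transfer of the right descent condition from $x$ to $z$ in the second step, which is why I would make the length additivity explicit there.
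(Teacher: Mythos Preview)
Your proof is correct and follows essentially the same route as the paper. The paper applies Lemma~\ref{lemma:decwtilde} to obtain $x=yz$ (leaving the verification $y'=1$ implicit) and then invokes Lemma~\ref{lemma:compatibilita}$(b)$ to transfer the condition $K\subset A_R(x)$ to $z$; your second step simply unpacks the length computation that proves Lemma~\ref{lemma:compatibilita}$(b)$, so the two arguments coincide.
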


\begin{proof}
	Let $x\in {}^JW_{S'}^K$. By Lemma \ref{lemma:decwtilde} we can write $x=yz$ 
	with $y\in {}^JW_{S'}^{K'}$ and $z\in {}^{K'\cap y^{-1}Jy}W_{K'}$. 
	By Lemma \ref{lemma:compatibilita}$(b)$, since $y\in W_{S'}^{K'}$ and 
	$yz=x\in W_{S'}^K$ we have $z\in W_{S'}^K$.
\end{proof}

\begin{prop}\label{prop:razionale2}
	Let $Q,J,K,K'\subset S'\subset S$ with $K\subset K'$.
	Then
	\begin{equation}\label{eq:relazionep}
		p^{S'}_{Q,J,K}(t)=\sum_{Q'\subset K'} 
		p^{K'}_{Q,Q',K}(t)\;p^{S'}_{Q',J,K'}(t).
	\end{equation}
\end{prop}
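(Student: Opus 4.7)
The plan is to lift Lemma~\ref{lemma:razionale1} to the refined sets $p^{S'}_{Q,J,K}$. Every $x\in p^{S'}_{Q,J,K}\subset {}^JW_{S'}^K$ has a unique factorization $x=yz$ with $y\in {}^JW_{S'}^{K'}$ and $z\in {}^{K'\cap y^{-1}Jy}W_{K'}^K$; writing $Q'=K'\cap y^{-1}Jy$, the first factor automatically lies in $p^{S'}_{Q',J,K'}$. So it will suffice to show that the condition $K\cap x^{-1}Jx=Q$ defining $p^{S'}_{Q,J,K}$ translates, given this $y$ and the induced $Q'$, into the single condition $z\in p^{K'}_{Q,Q',K}$; then summing $t^{\ell(x)}$ over the induced partition of $p^{S'}_{Q,J,K}$ indexed by $Q'\subset K'$ produces exactly the right-hand side.

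The core calculation is an intersection identity. Since $K\subset K'$ and $z\in W_{K'}$ (so that $z^{-1}W_{K'}z=W_{K'}$), one computes
\[
W_K\cap x^{-1}W_Jx \;=\; W_K\cap W_{K'}\cap z^{-1}y^{-1}W_Jyz \;=\; W_K\cap z^{-1}\bigl(W_{K'}\cap y^{-1}W_Jy\bigr)z \;=\; W_K\cap z^{-1}W_{Q'}z.
\]
The element $z$ is a minimal-length $(W_{Q'},W_K)$-double-coset representative inside the Coxeter system $(W_{K'},K')$, so applying (\ref{eq:lemma2.25}) to this subsystem gives $W_K\cap z^{-1}W_{Q'}z=W_{K\cap z^{-1}Q'z}$. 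Hence $K\cap x^{-1}Jx=K\cap z^{-1}Q'z$, and the condition $K\cap x^{-1}Jx=Q$ becomes exactly $z\in p^{K'}_{Q,Q',K}$.

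The factorization is length-additive: applying Lemma~\ref{lemma:decwtilde} to $x$ inside $W_{S'}$ for the pair $(J,K')$, the assumption $x\in{}^JW_{S'}$ forces the $W_J$-factor of the decomposition to be trivial, yielding $\ell(x)=\ell(y)+\ell(z)$. Summing $t^{\ell(x)}=t^{\ell(y)}t^{\ell(z)}$ over the resulting disjoint union of pairs $(y,z)$ indexed by $Q'\subset K'$ then gives the claimed product identity. The delicate step is the intersection calculation above: one has to use $K\subset K'$ to insert $W_{K'}$ without changing the intersection, then use $z\in W_{K'}$ to pull $z^{\pm 1}$ through $W_{K'}$, and finally apply (\ref{eq:lemma2.25}) to the \emph{parabolic subsystem} $(W_{K'},K')$ rather than $(W,S)$, which is legitimate because $(W_{K'},K')$ is itself a Coxeter system with the compatible length function.
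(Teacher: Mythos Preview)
Your argument is essentially the paper's own proof: both factor $x=yz$ via Lemma~\ref{lemma:razionale1}, set $Q'=K'\cap y^{-1}Jy$, and carry out the same intersection computation
\[
W_K\cap x^{-1}W_Jx=W_K\cap z^{-1}(W_{K'}\cap y^{-1}W_Jy)z=W_K\cap z^{-1}W_{Q'}z=W_{K\cap z^{-1}Q'z}
\]
using (\ref{eq:lemma2.25}) twice.

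The one step you leave implicit is the converse direction of the bijection. Lemma~\ref{lemma:razionale1} only says that every $x\in{}^JW_{S'}^K$ admits a unique factorization $x=yz$; it does \emph{not} say that every product $yz$ with $y\in p^{S'}_{Q',J,K'}$ and $z\in p^{K'}_{Q,Q',K}$ lands back in ${}^JW_{S'}^K$. Without this, your ``induced partition'' only yields the coefficientwise inequality $p^{S'}_{Q,J,K}(t)\le\sum_{Q'}p^{K'}_{Q,Q',K}(t)\,p^{S'}_{Q',J,K'}(t)$. The paper closes this gap in its second paragraph by invoking Lemma~\ref{lemma:compatibilita} (parts $(b)$ and $(c)$) to check $yz\in{}^JW_{S'}^K$; once that is established, your intersection identity gives $K\cap(yz)^{-1}J(yz)=Q$ and the bijection is complete.
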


\begin{proof}
	We can take $Q\subset K$ and $|Q|\leq |J|$.
	In order to prove the proposition, we have to show
	\begin{equation}\label{eq:proof1.5}
		p^{S'}_{Q,J,K}=
		\bigsqcup_{Q'\subset K'}
		\bigsqcup_{\substack{y\in{}^JW_{S'}^{K'}\\ K'\cap y^{-1}Jy=Q'}}
		\bigsqcup_{\substack{z\in {}^{Q'}W_{K'}^K\\ K\cap z^{-1}Q'z=Q}}
		\{yz\}.
	\end{equation}
	Let $x\in p^{S'}_{Q,J,K}$. By Lemma 
	\ref{lemma:razionale1} we can write in a unique way $x=yz$ with 
	$y\in{}^JW_{S'}^{K'}$ and $z\in {}^{K'\cap y^{-1}Jy}W_{K'}^K$  
	and we have $Q=K\cap z^{-1}y^{-1}Jyz$.
	We have to prove $K\cap z^{-1}Q'z=Q$ where $Q'=K'\cap y^{-1}Jy$.
	Since $yz=x\in {}^JW_{S'}^K$, by (\ref{eq:lemma2.25}) we have 
	\[
	W_{K\cap z^{-1}y^{-1}Jyz}=W_K\cap z^{-1}y^{-1}W_Jyz
	=W_K\cap W_{K'}\cap z^{-1}y^{-1}W_Jyz\\
	=W_K\cap z^{-1}(W_{K'}\cap y^{-1}W_Jy)z
	\]
	Since $y\in {}^JW_{S'}^{K'}$, it is equal to
	$W_K\cap z^{-1}W_{K'\cap y^{-1}Jy}z=W_K\cap z^{-1}W_{Q'}z$ and since $z\in {}^{Q'}W_{K'}^{K}$ it is equal to $W_{K\cap z^{-1}Q'z}$.
	We obtain $Q=K\cap z^{-1}Q'z$ and so the first inclusion in 
	(\ref{eq:proof1.5}).\\
	Now, if we take $Q'\subset K'$, $y\in p^{S'}_{Q',J,K'}$ and $z\in 
	p^{K'}_{Q,Q',K}$ then we have $x=yz\in {}^JW_{S'}^K$ by Lemma 
	\ref{lemma:compatibilita}.
	Furthermore, we have just proved above that $W_{K\cap 
	z^{-1}y^{-1}Jyz}=W_{K\cap 
	z^{-1}Q'z}$ and so $K\cap x^{-1}Jx=Q$ which proves (\ref{eq:proof1.5}) and 
	so the lemma.
\end{proof}

In the sum of (\ref{eq:relazionep}) we can take $Q'$ such that $|Q|\leq 
|Q'|\leq |J|$, otherwise the summand is $0$.
We observe that if we take $Q=K=\emptyset$ in (\ref{eq:relazionep}) we 
obtain (\ref{eq:Wtilde}).

\begin{rmk}\label{rmk:M}
For every $K\subset S'\subset S$, consider the matrix 
\[M_{K,S'}:=\Big(p^{S'}_{Q,J,K}(t)\Big)_{Q\subset K,J\subset S'}\in 
\mathrm{Mat}_{2^{|K|}\times 2^{|S'|}}(\Z[[t]]).\]
Proposition \ref{prop:razionale2} says that $M_{K,S'}=M_{K,K'}M_{K',S'}$ for 
every 
$K\subset K'\subset S'\subset S$.
This allows us to reduce the problem of studying the rationality of 
$p^{S'}_{Q,J,K}(t)$ to the case where $|K|=|S'|-1$.
\end{rmk}

\subsection{Second relation}

\begin{lemma}\label{lemma:AB}
	Let $A\subset \Sigma$ and $w\in W$ such that $wA\subset \Sigma$. Then for 
	every 
	$B\subset N(w)\cap\Sigma$ we have 
	$\bm\Phi^+_{A\cup B}\setminus\bm\Phi^+_{A}\subset N(w)$.
\end{lemma}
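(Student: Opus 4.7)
The plan is to fix an arbitrary $\alpha\in\bm\Phi^+_{A\cup B}\setminus\bm\Phi^+_A$ and show directly that $w\alpha\in\bm\Phi^-$, which by definition means $\alpha\in N(w)$.

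First I would establish that $A$ and $B$ are disjoint: if some $s\in A\cap B$, then $ws\in\Sigma\subset\bm\Phi^+$ (because $wA\subset\Sigma$) and simultaneously $ws\in\bm\Phi^-$ (because $s\in B\subset N(w)$), which is impossible. Since $A$ and $B$ are now disjoint subsets of $\Sigma$, the element $\alpha$ decomposes uniquely as $\alpha=\alpha_A+\alpha_B$, where $\alpha_A\in\R_{\geq 0}\hat{A}$ and $\alpha_B\in\R_{\geq 0}\hat{B}$. The hypothesis $\alpha\notin\bm\Phi^+_A$ forces $\alpha_B\neq 0$.

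The heart of the argument is a sign analysis of $w\alpha=w\alpha_A+w\alpha_B$ in the basis $\Sigma$. The summand $w\alpha_A$ is a non-negative combination of elements of $wA\subset\Sigma$, hence a non-negative combination of simple roots supported entirely on $wA$. The summand $w\alpha_B$ is a non-negative combination of elements of $wB\subset\bm\Phi^-$, so each of its coefficients in the simple-root basis is $\leq 0$. Combining the two, for every $s\in\Sigma\setminus wA$ the $s$-coefficient of $w\alpha$ is $\leq 0$. Since $w\alpha$ is a root, the dichotomy $\bm\Phi=\bm\Phi^+\sqcup\bm\Phi^-$ tells us that its simple-root coefficients are either all $\geq 0$ or all $\leq 0$. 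If $w\alpha\in\bm\Phi^+$, then the coefficients outside $wA$ are both $\geq 0$ and $\leq 0$, hence zero, so $w\alpha\in\bm\Phi^+_{wA}$; applying $w^{-1}$, which restricts to a linear bijection $wA\leftrightarrow A$ on simple roots and thus carries $\bm\Phi^+_{wA}$ bijectively onto $\bm\Phi^+_A$, yields $\alpha\in\bm\Phi^+_A$, contradicting the choice of $\alpha$. Therefore $w\alpha\in\bm\Phi^-$, as required.

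There is no real obstacle: once the clean decomposition $\alpha=\alpha_A+\alpha_B$ is in place, the argument is just careful sign-bookkeeping in the simple-root basis, using twice the standard dichotomy $\bm\Phi=\bm\Phi^+\sqcup\bm\Phi^-$. The only step that deserves a moment's care is the identification $w^{-1}\bm\Phi^+_{wA}=\bm\Phi^+_A$, which follows immediately from the linearity of $w^{-1}$ and the bijection $\hat{wA}\leftrightarrow\hat{A}$ it induces.
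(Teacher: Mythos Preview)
Your argument is correct and is essentially the same as the paper's. Both decompose $\alpha$ into its $A$-part and $B$-part, observe that $w\alpha_A$ is supported on $wA$ while $w\alpha_B$ has only non-positive simple-root coefficients, and conclude via the dichotomy $\bm\Phi=\bm\Phi^+\sqcup\bm\Phi^-$; the paper phrases the last step as ``there exists $d\in\Sigma\setminus wA$ with $\mu_d\neq 0$, and this $\mu_d$ must be negative'', whereas you phrase it as a contradiction, but the content is identical.
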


\begin{proof}
	Let $\alpha=\sum_{a\in A}\lambda_a a+\sum_{b\in B}\lambda_b b\in 
	\bm\Phi^+_{A\cup B}\setminus\bm\Phi^+_{A}$ and $w\alpha=\sum_{c\in 
	\Sigma}\mu_c c$. Then there exists $d\in \Sigma\setminus wA$ such that 
	$\mu_d\neq 
	0$, otherwise $\alpha=\sum_{c\in wA}\mu_c (w^{-1}c)\in\bm\Phi_A$. 
	Since $wb\in\bm\Phi^-$ for every $b\in B$, the coefficient $\mu_d$ must be 
	a negative linear combination of $\lambda_b$ and so $w\alpha\in\bm\Phi^-$.
\end{proof}
	
\begin{lemma}\label{lemma:uguaglianzainsiemi}
	Let $J,K\subset S$ and $Q\subset K$ with $W_K^Q$ finite. Let $w(K,Q)$ be 
	the element of maximal length in $W_K^Q$ and let $Q'=w(K,Q)Qw(K,Q)^{-1}$. 
	Then we 
	have
	\[\{xw(K,Q')\,|\,x\in p_{Q,J,K}\}=\{y\in p_{Q',J,Q'}\,|\, K\setminus 
	Q'\subset 
	D_R(y)\}.\]
\end{lemma}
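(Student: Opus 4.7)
The plan is to realize the claimed identity as a restriction of the bijection $\phi: w \mapsto w\cdot w(K,Q')$ of $W$ onto itself, whose inverse is $\psi: w \mapsto w\cdot w(K,Q)$ since $w(K,Q')^{-1}=w(K,Q)$ by Remark \ref{rmk:wHJ}. It then suffices to check the two inclusions $\phi(\text{LHS})\subset\text{RHS}$ and $\psi(\text{RHS})\subset\text{LHS}$; since the two maps are mutual inverses, these imply set equality.

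For the forward inclusion, let $x\in p_{Q,J,K}$ and put $y=xw(K,Q')$. Since $x\in W^K$ and $w(K,Q')\in W_K$, equation (\ref{eq:lunghezze}) applied to $y$ and to $ys$ for $s\in K$, together with the descent data $D_R(w(K,Q'))=K\setminus Q'$ from Remark \ref{rmk:wHJ}, yields both $K\setminus Q'\subset D_R(y)$ and $y\in W^{Q'}$. Moreover, the identity $w(K,Q')Q'w(K,Q')^{-1}=Q$ gives $yQ'y^{-1}=xQx^{-1}\subset J$, so $Q'\cap y^{-1}Jy=Q'$. For $y\in{}^JW$ I argue with roots: for $\alpha\in\hat J$, $y^{-1}\alpha=w(K,Q)x^{-1}\alpha$ with $x^{-1}\alpha\in\bm\Phi^+$ (from $x\in{}^JW$); if $x^{-1}\alpha$ were in $N(w(K,Q))=\bm\Phi_K^+\setminus\bm\Phi_Q^+$, then $x^{-1}s_\alpha x\in W_K\cap x^{-1}W_J x=W_Q$ by (\ref{eq:lemma2.25}), forcing $x^{-1}\alpha\in\bm\Phi_Q$, a contradiction.

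For the backward inclusion, let $y$ lie in the RHS and set $x=yw(K,Q)$. I apply Lemma \ref{lemma:AB} with $w=y$, $A=\hat{Q'}$ (so $yA\subset\hat J\subset\Sigma$ thanks to $yQ'y^{-1}\subset J$ and $y\in W^{Q'}$), and $B=\hat{K\setminus Q'}\subset N(y)\cap\Sigma$: the conclusion $\bm\Phi_K^+\setminus\bm\Phi_{Q'}^+\subset N(y)$ is exactly $N(w(K,Q'))\subset N(y)$, so Lemma \ref{lemma:rightdivisor} gives $\ell(x)=\ell(y)-\ell(w(K,Q))$. This length formula immediately gives $x\in{}^JW$, because $\ell(sx)\ge\ell(sy)-\ell(w(K,Q))=\ell(x)+1$ for $s\in J$. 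The verifications $x\in W^K$ and $K\cap x^{-1}Jx\subset Q$ proceed by splitting $\alpha\in\hat K$ into two cases: if $\alpha\in\hat Q$ then $w(K,Q)\alpha\in\hat{Q'}$ and $x\alpha\in y\hat{Q'}\subset\hat J\subset\bm\Phi^+$; if $\alpha\in\hat{K\setminus Q}$ then $\beta:=-w(K,Q)\alpha$ lies in $\bm\Phi_K^+\setminus\bm\Phi_{Q'}^+\subset N(y)$ (using $\alpha\notin\bm\Phi_Q$), whence $x\alpha=-y\beta\in\bm\Phi^+$. The opposite inclusion $Q\subset K\cap x^{-1}Jx$ follows from $xQx^{-1}=yQ'y^{-1}\subset J$.

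The main obstacle is ruling out, in the case $\alpha\in\hat{K\setminus Q}$, the possibility that $x\alpha=\alpha_t$ is the simple root of some $t\in J$: such a coincidence would give both $x\alpha=-y\beta$ and $x\alpha=\alpha_t$, whence $\beta=-y^{-1}\alpha_t$; but $y\in{}^JW$ forces $y^{-1}\alpha_t\in\bm\Phi^+$, so $\beta\in\bm\Phi^-$, contradicting $\beta\in\bm\Phi_K^+$. The subtlety of the proof thus lies in coordinating two positivity constraints on $y$: one from Lemma \ref{lemma:AB} placing $N(w(K,Q'))$ inside $N(y)$, the other from $y\in{}^JW$ keeping $y^{-1}\hat J$ in $\bm\Phi^+$.
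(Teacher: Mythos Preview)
Your proof is correct and follows essentially the same strategy as the paper's: both argue the two inclusions via the mutually inverse bijections $x\mapsto xw(K,Q')$ and $y\mapsto yw(K,Q)$, and both rely on Lemma~\ref{lemma:AB} together with Lemma~\ref{lemma:rightdivisor} to obtain the crucial length identity $\ell(yw(K,Q))=\ell(y)-\ell(w(K,Q))$ in the backward direction. The only noteworthy difference is local: where the paper invokes Lemma~\ref{lemma:compatibilita}$(c)$ (for $y\in{}^JW$ in the forward direction, and for $K\cap x^{-1}Jx\subset Q$ in the backward direction), you instead argue directly with roots, exploiting $N(w(K,Q))=\bm\Phi_K^+\setminus\bm\Phi_Q^+$ and the constraint $y^{-1}\hat J\subset\bm\Phi^+$. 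Both routes are equally short; yours is slightly more self-contained, while the paper's keeps the combinatorics packaged in the preparatory lemma.
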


\begin{proof}
	Let $v=w(K,Q)$. By remark \ref{rmk:wHJ} we have $v^{-1}=w(K,Q')$, 
	$D_R(v)=K\setminus Q$ and 
	$D_L(v)=K\setminus Q'$.
	If we take $x\in p_{Q,J,K}$  then $y=xv^{-1}$ is in ${}^JW$ by Lemma 
	\ref{lemma:compatibilita}$(c)$ and we have 
	$yQ'y^{-1}=xv^{-1} Q' vx^{-1}=xQx^{-1}$ which is contained in $J$. 
	This implies $Q'\cap y^{-1}Jy=Q'$ and so $y\in p_{Q',J,Q'}$. 
	Moreover, if $s\in K\setminus Q'$ then 
	$\ell(ys)=\ell(xv^{-1}s)=\ell(x)+\ell(v^{-1}s)= 
	\ell(x)+\ell(v^{-1})-1=\ell(y)-1$.
	Now, take $y\in p_{Q',J,Q'}$ such that $K\setminus Q'\subset D_R(y)$.
	By Lemma \ref{lemma:AB} we have 
	$\bm\Phi_{K}^+\setminus\bm\Phi_{Q'}^+\subset N(y)$ and so by
	Lemma \ref{lemma:rightdivisor} we have $\ell(yv)=\ell(y)-\ell(v)$ which 
	implies $x=yv\in {}^JW$ by Lemma \ref{lemma:compatibilita}$(a)$.
	Now, since $x\hat{Q}=yv\hat{Q}=y\hat{Q}'\subset J\subset \bm\Phi^+$, we 
	have 
	$\ell(xs)=\ell(x)+1$ for every $s\in Q$, while if $s\in K\setminus Q$ then
	$\ell(xs)=\ell(yvs)\geq\ell(y)-\ell(vs)=\ell(y)-\ell(v)+1=\ell(x)+1$.  
	Hence, $x\in {}^JW^{K}$ and so it remains to prove $K\cap x^{-1}Jx=Q$.
	Since $x\in {}^JW^K$ and $y=xv^{-1}\in {}^JW$, by Lemma 	
	\ref{lemma:compatibilita}$(c)$ we have $v^{-1}\in {}^{K\cap x^{-1}Jx}W$ 
	and then $K\cap x^{-1}Jx\subset A_L(v^{-1})\cap K=Q$. 
	Moreover, since $Q'=Q'\cap y^{-1}Jy$, we obtain $Q=Q\cap x^{-1}Jx\subset 
	K\cap 
	x^{-1}Jx$.
\end{proof}

\begin{corol}\label{coroll:riduzione}
	Let $J,K\subset S$ and $Q\subset K$. We have 
	\begin{equation}\label{eq:induzione}
	\sum_{Q\subset H\subset 
		K}\sum_{Q\subset 
		R\subset H}(-1)^{|H|-|Q|} p_{R,J,H}(t)=
	\left\{
	\begin{array}{ll}
	t^{\ell(w(K,Q'))}p_{Q',J,K}(t)& \text{if } W_K^Q \text{ is finite}\\
	0& \text{otherwise }
	\end{array}
	\right.
	\end{equation}
	where $Q'=w(K,Q)Qw(K,Q)^{-1}$.
\end{corol}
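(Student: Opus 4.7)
The plan is to swap the order of summation on the LHS and apply M\"{o}bius/inclusion-exclusion on the Boolean interval $\{H : Q \subset H \subset K\}$.

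For fixed $Q \subset H \subset K$, the inner sum $\sum_{Q \subset R \subset H} p_{R,J,H}(t)$ is the Poincar\'e series of $\{x \in {}^JW^H : Q \subset x^{-1}Jx\}$; since $W^H = \{x \in W : D_R(x) \cap H = \emptyset\}$, this set is
\[ \{x \in {}^JW : xQx^{-1} \subset J \text{ and } D_R(x) \cap H = \emptyset\}. \]
After exchanging sums in the LHS, the coefficient of $t^{\ell(x)}$ at a fixed $x \in {}^JW$ with $xQx^{-1} \subset J$ becomes
\[ \sum_{\substack{Q \subset H \subset K \\ H \cap D_R(x) = \emptyset}} (-1)^{|H|-|Q|}, \]
and a standard alternating-sum argument on the Boolean lattice shows this vanishes unless $K \setminus D_R(x) = Q$, i.e. $D_R(x) \cap K = K \setminus Q$, in which case it equals $1$. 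Hence the LHS is the Poincar\'e series of
\[ E := \{x \in {}^JW : xQx^{-1} \subset J,\ D_R(x) \cap K = K \setminus Q\} = \{x \in p_{Q,J,Q} : K \setminus Q \subset D_R(x)\}, \]
where the last equality uses that $D_R(x) \cap Q = \emptyset$ gives $x \in W^Q$, which together with $x \in {}^JW$ and $Q \cap x^{-1}Jx = Q$ forces $x \in p_{Q,J,Q}$.

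To identify $E(t)$ when $W_K^Q$ is finite, I would apply Lemma \ref{lemma:uguaglianzainsiemi} with $Q'$ in place of its $Q$. By Remark \ref{rmk:wHJ} we have $w(K,Q')^{-1} = w(K,Q)$ and conjugation by $w(K,Q')$ sends $Q'$ back to $Q$, and $W_K^{Q'}$ is finite iff $W_K^Q$ is, so the lemma yields a length-shifting bijection $p_{Q',J,K} \to E$ sending $x \mapsto xw(K,Q)$ with $\ell(xw(K,Q)) = \ell(x) + \ell(w(K,Q'))$; therefore $E(t) = t^{\ell(w(K,Q'))} p_{Q',J,K}(t)$, which is the claimed formula.

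When $W_K^Q$ is infinite, I would show $E = \emptyset$ directly: for any $x \in p_{Q,J,Q}$ one has $x\hat{Q} \subset \hat{J} \subset \Sigma$, so if also $K \setminus Q \subset D_R(x)$, Lemma \ref{lemma:AB} applies with $A = \hat{Q}$ and $B = \hat{K} \setminus \hat{Q} \subset N(x) \cap \Sigma$ and gives $\bm\Phi_K^+ \setminus \bm\Phi_Q^+ \subset N(x)$. Since $|N(x)| = \ell(x) < \infty$ while $\bm\Phi_K \setminus \bm\Phi_Q$ is infinite (equivalent to $W_K^Q$ being infinite, by Remark \ref{rmk:wHJ}), this is a contradiction, so $E$ is empty and the LHS is $0$. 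The main subtlety is the $Q/Q'$ bookkeeping in the penultimate step: Lemma \ref{lemma:uguaglianzainsiemi} must be fed $Q'$ (not $Q$) in order to land in $E$, relying on the involutive identity $w(K,Q')^{-1} = w(K,Q)$.
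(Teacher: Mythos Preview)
Your proof is correct and follows essentially the same approach as the paper's: rewrite the inner union as $\{x\in{}^JW^H\mid xQx^{-1}\subset J\}$, swap the order of summation, apply inclusion--exclusion over the Boolean interval $[Q,K]$ to reduce to the Poincar\'e series of $E=\{x\in p_{Q,J,Q}\mid K\setminus Q\subset D_R(x)\}$, then invoke Lemma~\ref{lemma:uguaglianzainsiemi} (in the finite case) and Lemma~\ref{lemma:AB} (in the infinite case). Your handling of the $Q/Q'$ swap when applying Lemma~\ref{lemma:uguaglianzainsiemi} is in fact more explicit than the paper's, which simply asserts the resulting set equality.
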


\begin{proof}
	Since for every $Q\subset H\subset K$ we have
	$\bigsqcup_{Q\subset R\subset H}p_{R,J,H}=
	\{x\in{}^JW^H\,|\,xQx^{-1}\subset J\}$,
	we obtain
	\begin{align*}
	\sum_{Q\subset H\subset 
		K}(-1)^{|H|-|Q|}\sum_{Q\subset 
		R\subset H} p_{R,J,H}(t)&=
	\sum_{Q\subset H\subset 
		K}(-1)^{|H|-|Q|}
	\sum_{\substack{x\in {}^JW\\ A_R(x)\supset H\\ xQx^{-1}\subset J}}
	t^{\ell(x)}\\
	&=\sum_{\substack{x\in {}^JW\\ xQx^{-1}\subset J}}
	\sum_{Q\subset H\subset K\cap A_R(x)}
	(-1)^{|H|-|Q|}t^{\ell(x)}
	\end{align*}
	Now, $\sum_{Q\subset H\subset K\cap A_R(x)}
	(-1)^{|H|-|Q|}$ is $1$ if $K\cap 
	A_R(x)=Q$ and $0$ otherwise.
	Hence, the left-hand size of (\ref{eq:induzione}) is the Poincaré series of 
	\[\{x\in{}^JW\,|\, 
	xQx^{-1}\subset J, K\cap A_R(x)=Q\}=\{x\in p_{Q,J,Q}\,|\,K\setminus 
	Q\subset D_R(x)\}.\]
	Now, if $W_K^Q$ is finite then by Lemma 
	\ref{lemma:uguaglianzainsiemi} this set is equal to $\{xw(K,Q')\,|\,x\in 
	p_{Q',J,K}\}$ while if $W_K^Q$ is infinite then it is empty by Lemma 
	\ref{lemma:AB} because $N(w)$ must be finite for every $w\in W$.  
\end{proof}

We remark that if in (\ref{eq:induzione}) we take $Q=J=\emptyset$ and $K=S$, we 
obtain formula (\ref{eq:Wt}) of the introduction.

\begin{prop}\label{prop:QJK}
	Let $(W,S)$ be a Coxeter system and $J,K\subset S$. 
	If $p_{H,J,H}(t)$ is a rational function for every $H\subset K$, then 
	$p_{Q,J,K}(t)$ is a rational function for every $Q\subset K$. 
\end{prop}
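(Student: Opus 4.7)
My plan is a double induction: an outer induction on $|K|$, and within each outer step a reverse induction on $|Q|$, with Corollary \ref{coroll:riduzione} providing the crucial identity.

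For the outer induction, the base case $K=\emptyset$ is immediate since $p_{\emptyset,J,\emptyset}(t)$ is assumed rational. For the step, fix $K$ and assume the proposition for every $K''\subsetneq K$. The hypothesis of the proposition for such a $K''$ is inherited from that for $K$ (just restrict to $H\subset K''\subset K$), so the outer inductive hypothesis yields that $p_{R,J,H}(t)$ is rational for every $H\subsetneq K$ and every $R\subset H$. It remains to prove that $p_{Q,J,K}(t)$ is rational for every $Q\subset K$.

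I argue by reverse induction on $|Q|$. The base case $Q=K$ is the hypothesis. Fix $Q\subsetneq K$ and assume $p_{R,J,K}(t)$ is rational for every $R\subset K$ with $|R|>|Q|$. Applying Corollary \ref{coroll:riduzione} with this $Q$, I split the right-hand side into the piece $H=K$ and the piece $H\subsetneq K$; the second is rational by the outer hypothesis. Inside the piece $H=K$, I further split $R=Q$ off from $R\supsetneq Q$; the latter is rational by the inner hypothesis. Gathering all rational parts into a single $r_Q(t)\in\Q(t)$, I obtain
\[
t^{\ell(w(K,Q))}\,p_{Q',J,K}(t) \;=\; (-1)^{|K|-|Q|}\,p_{Q,J,K}(t) + r_Q(t)
\]
with $Q'=w(K,Q)Qw(K,Q)^{-1}$ whenever $W_K^Q$ is finite, and $0=(-1)^{|K|-|Q|}p_{Q,J,K}(t)+r_Q(t)$ otherwise. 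The infinite case gives rationality of $p_{Q,J,K}(t)$ directly.

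When $W_K^Q$ is finite, Remark \ref{rmk:wHJ} gives $w(K,Q)^{-1}=w(K,Q')$, so $Q\mapsto Q'$ is an involution on $\{Q\subset K:W_K^Q\text{ finite}\}$ preserving $|Q|$. If $Q'=Q$ the identity becomes $(t^{\ell(w(K,Q))}-(-1)^{|K|-|Q|})\,p_{Q,J,K}(t)=r_Q(t)$, with nonzero coefficient since $Q\subsetneq K$ forces $\ell(w(K,Q))\geq 1$. If $Q'\neq Q$, running the same argument starting from $Q'$ yields a symmetric twin equation, and $p_{Q,J,K}(t)$ and $p_{Q',J,K}(t)$ satisfy a $2\times 2$ linear system with determinant $1-t^{2\ell(w(K,Q))}\neq 0$ in $\Q(t)$; solving it expresses both as rational functions. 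The main obstacle lies precisely in this pairing: since the inner induction moves one $|Q|$-layer at a time, one must arrange for $Q$ and $Q'$ to be solved for simultaneously in the same layer, which is exactly what the involutive structure of Remark \ref{rmk:wHJ} makes possible.
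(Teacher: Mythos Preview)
Your proof is correct and follows essentially the same route as the paper's: a double induction (outer on $|K|$, inner by decreasing $|Q|$), with Corollary \ref{coroll:riduzione} supplying the recursion and the three cases ($W_K^Q$ infinite; $W_K^Q$ finite with $Q'=Q$; $W_K^Q$ finite with $Q'\neq Q$) handled exactly as in the paper, including the $2\times 2$ system with invertible determinant $1-t^{2\ell(w(K,Q))}$ in the last case. The only cosmetic difference is that you justify the base case $K=\emptyset$ directly from the hypothesis, whereas the paper invokes the known rationality of ${}^JW(t)$.
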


\begin{proof}
	We prove the assertion by induction on $|K|$. If $K=\emptyset$ then 
	$p_{\emptyset,J,\emptyset}(t)={}^JW(t)$ is rational. Now, we take $K\neq 
	\emptyset$, we suppose that 
	$p_{R,J,H}(t)$ is rational for every $H\subsetneq K$ and every $R\subset H$ 
	and we prove that $p_{Q,J,K}(t)$ is rational for every $Q\subset K$.
	We proceed by induction on $|K|-|Q|$: if $K=Q$, by hypothesis we have 
	$p_{K,J,K}(t)$ rational and so we can take $Q\subsetneq K$ and we can 
	suppose that $p_{T,J,K}(t)$ is rational for every $T\subset K$ with 
	$|T|>|Q|$. \\
	\textit{First case}: $W_K^Q$ finite and $Q'=w(K,Q)Qw(K,Q)^{-1}$ different 
	from $Q$.	
	Since $|Q|=|Q'|$ and $\ell(w(K,Q))=\ell(w(K,Q'))$, by Corollary 
	\ref{coroll:riduzione} we obtain
	\begin{align*}
	t^{\ell(w(K,Q))}p_{Q,J,K}(t)- (-1)^{|K|-|Q|}p_{Q',J,K}&=
	f_{Q'}(t)\\
	t^{\ell(w(K,Q))}p_{Q',J,K}(t) -(-1)^{|K|-|Q|}p_{Q,J,K}&=
	f_Q(t)	
	\end{align*}
	where, for $L\in\{Q,Q'\}$
	\[f_L(t)=\sum_{L\subset H\subsetneq K}\sum_{L\subset 
		R\subset H}(-1)^{|H|-|L|} p_{R,J,H}(t)+\sum_{L\subsetneq 
		R\subset K}(-1)^{|K|-|L|} p_{R,J,K}(t)\] is a rational function by 
		inductive hypothesis.
		We obtain
		\[p_{Q,J,K}(t)=\frac{t^{\ell(w(K,Q))}f_{Q'}(t)+(-1)^{|K|-|Q|}f_Q(t)}
		{t^{2\ell(w(K,Q))}-1}\]
		which is a rational function.\\
	\textit{Second case}: $W_K^Q$ finite and $Q=w(K,Q)Qw(K,Q)^{-1}$.	
	By Corollary \ref{coroll:riduzione} we have
	\[p_{Q,J,K}(t) =\frac{f_Q(t)}{t^{\ell(w_Kw_{Q})}-(-1)^{|K|-|Q|}}\]
	which is a rational function.\\
	\textit{Third case}: $W_K^Q$ infinite. By Corollary \ref{coroll:riduzione} 
	we obtain $p_{Q,J,K}(t) =(-1)^{|K|-|Q|+1}f_Q(t)$ which is a rational 
	function.
\end{proof}

\subsection{Further relations}\label{par:otherrelations}
Proposition \ref{prop:QJK} allows us to reduce the problem of studying the 
rationality of $p^{S'}_{Q,J,K}(t)$ to the case where $Q=K$.
In this paragraph, to further simplify the problem, we introduce a partition of 
$p^{S'}_{K,J,K}(t)$ and we study the Poincaré series of these subsets.

\medskip
For every $J,K\subset S'\subset S$ and $R\subset J$ we define the set
\[h^{S'}_{R,J,K}=\{x\in {}^JW_{S'}^K\,|\,xKx^{-1}=R\}=\{x\in 
{}^JW_{S'}\,|\,x\hat{K}=\hat{R}\}\] 
and we write $h^{S}_{R,J,K}=h_{R,J,K}$.
We observe that if $|R|\neq |K|$ then $h^{S'}_{R,J,K}=\emptyset$ and that 

\begin{equation}\label{eq:pKJK}
	p^{S'}_{K,J,K}= \{x\in {}^JW_{S'}^K\,|\,xKx^{-1}\subset J\} 
	=\bigsqcup_{R\subset J}h^{S'}_{R,J,K}.
\end{equation} 
In the rest of this paragraph, we show that the series $h^{S'}_{R,J,K}(t)$ 
satisfy similar relations to those proved in previous paragraphs for 
$p^{S'}_{Q,J,K}(t)$.

\begin{prop}\label{prop:razionale3}
	Let $R,J,K,J'\subset S'\subset S$ with $R\subset J\subset J'$.
	Then
	\begin{equation*}
		h^{S'}_{R,J,K}(t)=
		\sum_{R'\subset J'}h^{J'}_{R,J,R'}(t)\;h^{S'}_{R',J',K}(t).
	\end{equation*}
\end{prop}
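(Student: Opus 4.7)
The goal is to exhibit a length-preserving bijection
\[
\bigsqcup_{R' \subset J'} h^{J'}_{R,J,R'} \times h^{S'}_{R',J',K} \;\longrightarrow\; h^{S'}_{R,J,K}, \qquad (y,z) \longmapsto yz,
\]
from which the identity of Poincar\'e series follows immediately. The plan is to mimic the proof of Proposition~\ref{prop:razionale2}, replacing Lemma~\ref{lemma:razionale1} by its dual: for $J \subset J'$, every $x \in {}^JW^K_{S'}$ factors uniquely as $x = yz$ with $z \in {}^{J'}W^K_{S'}$, $y \in {}^JW \cap W_{J'}$ and $\ell(x) = \ell(y) + \ell(z)$, a statement I would derive by applying Lemma~\ref{lemma:razionale1} to $x^{-1} \in {}^KW^J_{S'}$ with intermediate parabolic $W_{J'}$ and then inverting.

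For the forward direction, I would take $(y,z) \in h^{J'}_{R,J,R'} \times h^{S'}_{R',J',K}$ and verify: $\ell(yz) = \ell(y) + \ell(z)$ by (\ref{eq:lunghezze}); $yz \in W^K$ by rewriting $yzs = yrz$ with $r := zsz^{-1} \in zKz^{-1} = R' \subset J'$ and combining $\ell(yr) = \ell(y)+1$ (since $y \in W_{J'}^{R'}$) with additivity $\ell(yrz) = \ell(yr) + \ell(z)$ (since $yr \in W_{J'}$, $z \in {}^{J'}W$); $yz \in {}^JW$ by the same trick with $s \in J \subset J'$; and $(yz)K(yz)^{-1} = yR'y^{-1} = R$ by direct computation.

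For the backward direction, I would start from $x \in h^{S'}_{R,J,K}$, apply the dual decomposition to obtain $x = yz$, and take $R' := zKz^{-1}$ as the parameter. From $x\hat{K} = \hat{R} \subset \hat{J'}$ together with $y \in W_{J'}$ (giving $y^{-1}\hat{R} \subset \bm\Phi_{J'}$) and $z \in W^K$ (giving $z\hat{K} \subset \bm\Phi^+$), the inclusion $z\hat{K} \subset \bm\Phi_{J'}^+$ is immediate.

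The hard part will be upgrading $z\hat{K} \subset \bm\Phi_{J'}^+$ to $z\hat{K} \subset \hat{J'}$, which is needed for $R' = zKz^{-1}$ to actually be a subset of simple reflections. I plan to argue by extremality of simple roots: for $\alpha \in \hat{K}$, expand $z\alpha = \sum_{\beta \in \hat{J'}} c_\beta\,\beta$ with $c_\beta \in \mathbb{Z}_{\geq 0}$ and apply $z^{-1}$ to get $\alpha = \sum_\beta c_\beta\,(z^{-1}\beta)$; since $z \in {}^{J'}W$ forces each $z^{-1}\beta$ to be a positive root, the simple root $\alpha$ appears as a nonnegative integer combination of positive roots, and comparing total height in the basis $\Sigma$ forces exactly one $c_{\beta_0}$ to equal $1$ with $z^{-1}\beta_0 = \alpha$. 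This gives $z\alpha = \beta_0 \in \hat{J'}$, hence $R' = zKz^{-1} \subset J'$. The equality $W_{R'} = zW_Kz^{-1}$ then identifies $R'$ with the set $M = J' \cap zKz^{-1}$ appearing in the dual decomposition (by uniqueness of simple reflections of a standard parabolic), so $y \in W_{J'}^{R'}$; together with $y \in {}^JW$ and $yR'y^{-1} = R$ this places $y \in h^{J'}_{R,J,R'}$ and $z \in h^{S'}_{R',J',K}$, with uniqueness of $(R',y,z)$ inherited from the uniqueness of the dual factorization.
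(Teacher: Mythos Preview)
Your overall strategy coincides with the paper's: both establish the same length-preserving bijection via the factorization $x=yz$ with $y\in{}^JW_{J'}$ and $z\in{}^{J'}W_{S'}^K$ (the paper writes $x=zy$ with the letters swapped, and invokes Lemma~\ref{lemma:razionale1} without spelling out the dualization that you make explicit). The forward direction is handled in the same way.

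The one divergence is in the backward step showing $zKz^{-1}\subset J'$, and here your argument has a small gap. In a general Coxeter root system the coefficients $c_\beta$ in $z\alpha=\sum_{\beta\in\hat{J'}}c_\beta\beta$ lie in $\mathbb{R}_{\geq 0}$, not $\mathbb{Z}_{\geq 0}$, and ``total height $\geq 1$'' for arbitrary positive roots is not a standard fact in that generality, so the comparison as written does not go through. The idea is easily repaired: for instance, from $s_{z\alpha}\in W_{J'}$ and $z\in{}^{J'}W$ one gets $\ell(s_{z\alpha})+\ell(z)=\ell(s_{z\alpha}z)=\ell(zs_\alpha)\leq\ell(z)+1$, forcing $\ell(s_{z\alpha})=1$ and hence $z\alpha\in\hat{J'}$. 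The paper avoids roots here altogether: it takes $R':=J'\cap zKz^{-1}$ straight from the decomposition, computes $W_{R'}=y^{-1}W_Ry$ via \eqref{eq:lemma2.25}, and then the cardinality squeeze $|R'|\leq|K|=|R|$ together with $zR'z^{-1}\supset R$ forces $R'=zKz^{-1}$. So the two proofs agree structurally, trading a root-system lemma for a counting argument at this single step.
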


\begin{proof}
	We can suppose $|K|=|R|$.
	In order to prove the proposition, we have to show
	\begin{equation*}
		h^{S'}_{R,J,K}=
		\bigsqcup_{R'\subset J'}
		\bigsqcup_{\substack{y\in{}^{J'}W_{S'}^{K}\\ yKy^{-1}=R'}}
		\bigsqcup_{\substack{z\in {}^{J}W_{J'}^{R'}\\ zR'z^{-1}=R}}
		\{zy\}.
	\end{equation*}
	Let $x\in h^{S'}_{R,J,K}$. By Lemma 
	\ref{lemma:razionale1} we can write in a unique way $x=zy$ with $y\in 
	{}^{J'}W_{S'}^K$ and $z\in{}^JW_{J'}^{R'}$ where $R'=J'\cap yKy^{-1}$.
	We have $W_{R'}=W_{J'}\cap yW_Ky^{-1}=z^{-1}(W_{J'}\cap xW_Kx^{-1})z 
	=z^{-1}W_Rz$.
	This implies $zR'z^{-1}=R$, $|R'|=|R|=|K|$ and so $yKy^{-1}=R'$. \\
	Now, if we take $R'\subset J'$, $y\in h^{S'}_{R',J',K}$ and $z\in 
	h^{J'}_{R,J,R'}$ then we have $x=zy\in {}^JW_{S'}^K$ by Lemma 
	\ref{lemma:compatibilita} and $xKx^{-1}=zyKy^{-1}z^{-1}=zR'z^{-1}=R$.
\end{proof}

\begin{rmk}\label{rmk:N}
	For every $J\subset S'\subset S$, consider the matrix 
	\[N_{J,S'}:=\Big(h^{S'}_{R,J,K}(t)\Big)_{R\subset J,K\subset S'}\in 
	\mathrm{Mat}_{2^{|J|}\times 2^{|S'|}}(\Z[[t]]).\]
	Proposition \ref{prop:razionale3} says that $N_{J,S'}=N_{J,J'}N_{J',S'}$ 
	for 
	every $J\subset J'\subset S'\subset S$.
\end{rmk}

\begin{lemma}\label{lemma:uguaglianzainsiemi2}
	Let $J,K\subset S$ and $R\subset J$ with $W_J^R$ finite. Let $w(J,R)$ be 
	the element of maximal length in $W_J^R$ and let $R'=w(J,R)Rw(J,R)^{-1}$. 
	Then we have 
	\[\{w(J,R)x\,|\,x\in h_{R,J,K}\}=\{y\in h_{R',R',K}\,|\, J\setminus 
	R'\subset D_L(y)\}.\]
\end{lemma}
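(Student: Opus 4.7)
The statement is the left-handed analogue of Lemma \ref{lemma:uguaglianzainsiemi}, and my plan is to prove it by the parallel argument, now multiplying on the left by $v:=w(J,R)$. By Remark \ref{rmk:wHJ} I have $v^{-1}=w(J,R')$, $N(v^{-1})=\bm\Phi_J^+\setminus\bm\Phi_{R'}^+$, $D_L(v)=J\setminus R'$ and $D_R(v)=J\setminus R$; I will also use the bijections $v\hat R=\hat{R'}$ and $v^{-1}\hat{R'}=\hat R$ on simple roots, which follow from the definition of $v$.

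For the inclusion $\subset$, take $x\in h_{R,J,K}$ and set $y=vx$. Since $v\in W_J$ and $x\in{}^JW$, lengths add: $\ell(y)=\ell(v)+\ell(x)$. The conjugation identity $yKy^{-1}=vRv^{-1}=R'$ is immediate. To see $y\in{}^{R'}W^K$, for $s\in R'$ I exploit $s\notin D_L(v)$ to compute $\ell(sy)=\ell(sv)+\ell(x)=\ell(y)+1$, while for $s\in K$ one has $y\hat s\in v\hat R=\hat{R'}\subset\Sigma$, so no simple root of $K$ lies in $N(y)$, i.e. $\ell(ys)=\ell(y)+1$. Finally, $J\setminus R'=D_L(v)$ gives $\ell(sy)=\ell(sv)+\ell(x)=\ell(y)-1$ for such $s$, so $J\setminus R'\subset D_L(y)$.

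For the inclusion $\supset$, take $y\in h_{R',R',K}$ with $J\setminus R'\subset D_L(y)$ and set $x=v^{-1}y$. Rephrasing descents in terms of roots, $\widehat{J\setminus R'}\subset N(y^{-1})$; combined with $y\hat K=\hat{R'}\subset\Sigma$ (so $y^{-1}\hat{R'}=\hat K\subset\Sigma$), I may apply Lemma \ref{lemma:AB} to $w=y^{-1}$ with $A=\hat{R'}$ and $B=\widehat{J\setminus R'}$, obtaining $\bm\Phi_J^+\setminus\bm\Phi_{R'}^+\subset N(y^{-1})$. Since this set equals $N(v^{-1})$, Lemma \ref{lemma:rightdivisor} yields $\ell(y^{-1}v)=\ell(y)-\ell(v)$, i.e. $\ell(x)=\ell(y)-\ell(v)$ and the factorization $y=vx$ is reduced. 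Then $xKx^{-1}=v^{-1}R'v=R$, and $x\in W^K$ follows from $x\hat K=v^{-1}\hat{R'}=\hat R\subset\Sigma$ exactly as in the first direction.

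The main technical point is verifying $x\in{}^JW$, i.e. $y^{-1}v\hat s\in\bm\Phi^+$ for every $s\in J$; I will split into cases. If $s\in R$ then $v\hat s\in\hat{R'}$, and $y\in{}^{R'}W$ forces $y^{-1}\hat{R'}\subset\bm\Phi^+$, so $y^{-1}v\hat s\in\bm\Phi^+$. If $s\in J\setminus R=D_R(v)$ then $v\hat s=-\beta$ with $\beta\in N(v^{-1})=\bm\Phi_J^+\setminus\bm\Phi_{R'}^+$; since that set lies in $N(y^{-1})$ by the step above, $y^{-1}\beta\in\bm\Phi^-$ and hence $y^{-1}v\hat s=-y^{-1}\beta\in\bm\Phi^+$. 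This yields $x\in h_{R,J,K}$ with $y=vx$, completing the proof.
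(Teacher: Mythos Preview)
Your proof is correct and follows essentially the same strategy as the paper's proof: establish the forward inclusion by multiplying on the left by $v=w(J,R)$ and checking membership in $h_{R',R',K}$ with the required left descents, then for the reverse inclusion use Lemma~\ref{lemma:AB} to obtain $\bm\Phi_J^+\setminus\bm\Phi_{R'}^+\subset N(y^{-1})$ and Lemma~\ref{lemma:rightdivisor} to get the reduced factorization $y=vx$. The only cosmetic differences are that the paper cites Lemma~\ref{lemma:compatibilita} to conclude $y\in{}^{R'}W^K$ where you verify it directly via roots, and for $s\in J\setminus R$ the paper uses the triangle inequality $\ell(y^{-1}vs)\geq\ell(y^{-1})-\ell(vs)=\ell(y^{-1}v)+1$ rather than your root computation via $N(v^{-1})\subset N(y^{-1})$.
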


\begin{proof}
	Let $v=w(J,R)$. We recall that $v^{-1}=w(J,R')$, $D_R(v)=J\setminus R$ and 
	$D_L(v)=J\setminus R'$.
	If we take $x\in h_{R,J,K}$  then $y=vx$ is in ${}^{R'}W^K$ by 
	Lemma \ref{lemma:compatibilita}.
	Moreover, we have $yKy^{-1}=vx Kx^{-1}v^{-1}=vRv^{-1}=R'$ and if 
	$s\in J\setminus R'$ then 
	$\ell(sy)=\ell(svx)=\ell(sv)+\ell(x)=\ell(y)-1$.
	On the other hand, take $y\in h_{R',R',K}$ such that $J\setminus 
	R'\subset D_L(y)$. Since $y^{-1}R'y=K$ and $J\setminus 
	R'\subset D_R(y^{-1})$, by Lemma \ref{lemma:AB} we have 
	$\bm\Phi_{J}^+\setminus\bm\Phi_{R'}^+\subset N(y^{-1})$  and so by
	Lemma \ref{lemma:rightdivisor} we have $\ell(y^{-1}v)=\ell(y)-\ell(v)$ 
	which implies $y^{-1}v\in {}^KW$ by Lemma \ref{lemma:compatibilita}$(a)$.
	Now, since $y^{-1}v\hat{R}=y\hat{R}'=K\subset \bm\Phi^+$, we have 
	$\ell(y^{-1}vs)=\ell(y^{-1}v)+1$ for every $s\in R$, while if $s\in 
	J\setminus R$ then
	$\ell(y^{-1}vs)\geq\ell(y^{-1})-\ell(vs)=\ell(y^{-1})-\ell(v)+1 
	=\ell(y^{-1}v)+1$.
	Hence, $y^{-1}v\in {}^KW^{J}$ and so $x=v^{-1}y\in {}^JW^{K}$.
	Finally, we have $xKx^{-1}=v^{-1}yKy^{-1}v=v^{-1}R'v=R$ and so $x\in 
	h_{R,J,K}$.
\end{proof}

\begin{corol}\label{coroll:riduzione2}
	Let $J,K\subset S$ and $R\subset J$. 
	We have 
	\begin{equation}\label{eq:induzione2}
		\sum_{R\subset H\subset J}(-1)^{|H|-|R|} h_{R,H,K}(t)=
		\left\{
		\begin{array}{ll}
		t^{\ell(w(J,R'))}h_{R',J,K}(t)& \text{if } W_J^R \text{ is finite}\\
		0& \text{otherwise }
		\end{array}
		\right.
	\end{equation}
	where $R'=w(J,R)Rw(J,R)^{-1}$.  
\end{corol}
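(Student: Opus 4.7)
The statement is formally parallel to Corollary \ref{coroll:riduzione}, with $h$ replacing $p$ and with Lemma \ref{lemma:uguaglianzainsiemi2} replacing Lemma \ref{lemma:uguaglianzainsiemi}, so I would mimic the earlier proof almost step by step.

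The plan is to expand the left-hand side combinatorially. By definition, $h_{R,H,K}$ consists of the $x\in W^K$ with $xKx^{-1}=R$ and $H\subset A_L(x)$. Note that the condition $xKx^{-1}=R$ together with $x\in W^K$ automatically forces $R\subset A_L(x)$: for $s\in K$ and $r=xsx^{-1}\in R$, the length-raising $\ell(xs)>\ell(x)$ rewrites as $\ell(rx)>\ell(x)$. So I can rewrite
\[
\sum_{R\subset H\subset J}(-1)^{|H|-|R|}h_{R,H,K}(t)
=\sum_{\substack{x\in W^K\\xKx^{-1}=R}}t^{\ell(x)}\sum_{R\subset H\subset J\cap A_L(x)}(-1)^{|H|-|R|}.
\]
The standard inclusion-exclusion collapse shows that the inner sum equals $1$ when $J\cap A_L(x)=R$ and $0$ otherwise, so the whole expression is the Poincaré series of
\[
\{x\in W^K\mid xKx^{-1}=R,\ J\cap A_L(x)=R\}
=\{y\in h_{R,R,K}\mid J\setminus R\subset D_L(y)\}.
\]

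The next step is to identify this set via Lemma \ref{lemma:uguaglianzainsiemi2}. The mild twist here is that the lemma parametrises the set in which $R$ plays the role of the \emph{image} of conjugation, so I would apply the lemma with input $R'$ in place of $R$: since $w(J,R)^{-1}=w(J,R')$ and the pair $(R,R')$ is swapped by this inversion, the lemma yields
\[
\{y\in h_{R,R,K}\mid J\setminus R\subset D_L(y)\}=\{w(J,R)^{-1}x\mid x\in h_{R',J,K}\}
\]
when $W_J^R$ (equivalently $W_J^{R'}$) is finite. Setting $v=w(J,R)$, the factorisation $v^{-1}x$ with $v^{-1}\in W_J$ and $x\in{}^JW$ satisfies $\ell(v^{-1}x)=\ell(v^{-1})+\ell(x)=\ell(w(J,R'))+\ell(x)$ by (\ref{eq:lunghezze}), giving the claimed $t^{\ell(w(J,R'))}h_{R',J,K}(t)$.

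For the case when $W_J^R$ is infinite, I would observe that the set $\{y\in h_{R,R,K}\mid J\setminus R\subset D_L(y)\}$ must be empty, because for such a $y$ one has $y^{-1}\hat R=\hat K\subset\Sigma$ and $\widehat{J\setminus R}\subset N(y^{-1})\cap\Sigma$ (recalling $s\in D_L(y)\iff \hat s\in N(y^{-1})$), so Lemma \ref{lemma:AB} forces the infinite set $\bm\Phi_J^+\setminus\bm\Phi_R^+\subset N(y^{-1})$, contradicting $|N(y^{-1})|=\ell(y^{-1})<\infty$. Hence the sum is $0$, which matches the second branch of the formula.

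I don't expect a serious obstacle: the only point that requires some care is the bookkeeping of the $R\leftrightarrow R'$ swap when invoking Lemma \ref{lemma:uguaglianzainsiemi2}, together with checking the length additivity $\ell(v^{-1}x)=\ell(v^{-1})+\ell(x)$, both of which are minor. The proof structure is a direct transcription of the one given for Corollary \ref{coroll:riduzione}, with left/right and $K$/$J$ interchanged wherever necessary.
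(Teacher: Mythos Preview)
Your proposal is correct and follows essentially the same route as the paper's proof: expand the alternating sum, swap the order of summation, collapse via inclusion--exclusion to the Poincar\'e series of $\{y\in h_{R,R,K}\mid J\setminus R\subset D_L(y)\}$, then invoke Lemma~\ref{lemma:uguaglianzainsiemi2} (applied with $R'$ in place of $R$, using $w(J,R')=w(J,R)^{-1}$) in the finite case and Lemma~\ref{lemma:AB} in the infinite case. Your explicit remarks on the $R\leftrightarrow R'$ swap and on the length additivity $\ell(w(J,R')x)=\ell(w(J,R'))+\ell(x)$ are exactly the details the paper leaves implicit.
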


\begin{proof}
	We have
	\begin{align*}
	\sum_{R\subset H\subset J}(-1)^{|H|-|R|} h_{R,H,K}(t)&=
	\sum_{R\subset H\subset J}(-1)^{|H|-|R|}
	\sum_{\substack{x\in W^K\\ A_L(x)\supset H\\ xKx^{-1}=R}}
	t^{\ell(x)}\\
	&=
	\sum_{\substack{x\in W^K\\ xKx^{-1}=R}}
	\sum_{R\subset H\subset J\cap A_L(x)}(-1)^{|H|-|R|}
	t^{\ell(x)}
	\end{align*}
	Now, $\sum_{R\subset H\subset J\cap A_L(x)}(-1)^{|H|-|R|}$ is $1$ if 
	$J\cap A_L(x)=R$ and $0$ otherwise.
	Hence, the left-hand size of (\ref{eq:induzione2}) is the Poincaré series 
	of 
	\[\{x\in {}^RW^K\,|\, 
	xKx^{-1}=R=J\cap A_L(x)\}=\{x\in h_{R,R,K}\,|\,J\setminus 
	R\subset D_L(x)\}.\]
	Now, if $W_J^R$ is finite then by Lemma 
	\ref{lemma:uguaglianzainsiemi2} this set is equal to $\{w(J,R')x\,|\,x\in 
	h_{R',J,K}\}$ while if $W_J^R$ is infinite then it is empty by Lemma 
	\ref{lemma:AB} because $N(w)$ must be finite for every $w\in W$. 
\end{proof}

\begin{prop}\label{prop:KJK}
	Let $(W,S)$ be a Coxeter system and $J,K\subset S$. 
	If $h_{H,H,K}(t)$ is a rational function for every $H\subset J$ with 
	$|H|=|K|$, then 
	$h_{R,J,K}(t)$ is a rational function for every $R\subset J$.
\end{prop}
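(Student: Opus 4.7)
The plan is to mimic the proof of Proposition \ref{prop:QJK}, with Corollary \ref{coroll:riduzione2} replacing Corollary \ref{coroll:riduzione}. I would proceed by two nested inductions: an outer induction on $|J|$ (with $K$ fixed) and an inner induction on $|J|-|R|$. The outer base $J=\emptyset$ is trivial since $h_{\emptyset,\emptyset,K}(t)$ is either $0$ or $1$. For the outer step, one observes that the hypothesis of the proposition restricts to every proper subset $H\subsetneq J$: any $H'\subset H$ with $|H'|=|K|$ is also a subset of $J$, hence $h_{H',H',K}(t)$ is rational. Applying the outer inductive hypothesis to each pair $(H,K)$ with $H\subsetneq J$ therefore gives that $h_{R,H,K}(t)$ is rational for every $H\subsetneq J$ and every $R\subset H$.

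For the inner induction on $|J|-|R|$, the base $R=J$ is immediate: when $|J|=|K|$ the series $h_{J,J,K}(t)$ is rational by the hypothesis of the proposition, and when $|J|\neq|K|$ the set $h_{J,J,K}$ is empty. For the inner step I suppose $h_{T,J,K}(t)$ is rational for every $T\subset J$ with $|T|>|R|$, and apply Corollary \ref{coroll:riduzione2} to $(R,J)$. Isolating the $H=J$ term in the left-hand sum yields $(-1)^{|J|-|R|}h_{R,J,K}(t)+f_R(t)$, where $f_R(t)=\sum_{R\subset H\subsetneq J}(-1)^{|H|-|R|}h_{R,H,K}(t)$ is a rational function by the outer inductive hypothesis.

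Exactly as in the proof of Proposition \ref{prop:QJK}, three cases arise. If $W_J^R$ is infinite, the corollary gives $h_{R,J,K}(t)=-(-1)^{|J|-|R|}f_R(t)$, which is rational at once. If $W_J^R$ is finite and $R'=w(J,R)Rw(J,R)^{-1}$ coincides with $R$, one solves $(t^{\ell(w(J,R))}-(-1)^{|J|-|R|})h_{R,J,K}(t)=-f_R(t)$; the denominator is a nonzero polynomial because $R\neq J$ forces $\ell(w(J,R))\geq 1$. If $W_J^R$ is finite and $R'\neq R$, I pair the identity for $(R,J)$ with the analogous one for $(R',J)$; using that the assignment $R\mapsto R'$ is an involution with $|R|=|R'|$ and $\ell(w(J,R))=\ell(w(J,R'))$, I obtain a $2\times 2$ linear system in the unknowns $h_{R,J,K}(t)$ and $h_{R',J,K}(t)$ with nonzero determinant $t^{2\ell(w(J,R))}-1$, and solving it simultaneously gives rationality for both. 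The main obstacle, exactly as in the proof of Proposition \ref{prop:QJK}, is this last case: one must check that the pairing $R\leftrightarrow R'$ closes up at the same level of the inner induction (a fact that rests on $|R|=|R'|$ and on the involutive nature of the conjugation by $w(J,R)$), so that the $2\times 2$ system does not appeal to any value of $h_{T,J,K}(t)$ not already covered by the inductive hypotheses.
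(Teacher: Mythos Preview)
Your argument is correct and follows the paper's approach. Two small remarks are in order.

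First, in the base case $J=\emptyset$ one has $h_{\emptyset,\emptyset,\emptyset}=W$, so $h_{\emptyset,\emptyset,K}(t)$ equals $W(t)$ (not $1$) when $K=\emptyset$; this is still rational, so nothing is affected.

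Second, the inner induction on $|J|-|R|$ is superfluous here, and the paper does without it. The point is that, unlike Corollary~\ref{coroll:riduzione}, the left-hand side of Corollary~\ref{coroll:riduzione2} is a \emph{single} sum over $H$: isolating the term $H=J$ produces only the one unknown $(-1)^{|J|-|R|}h_{R,J,K}(t)$, and your remainder $f_R(t)=\sum_{R\subset H\subsetneq J}(-1)^{|H|-|R|}h_{R,H,K}(t)$ involves solely values with $H\subsetneq J$, already rational by the outer induction. Consequently the ``main obstacle'' you flag---making the pairing $R\leftrightarrow R'$ close up at the same level of an inner induction---does not actually arise: in the case $R'\neq R$ both $f_R(t)$ and $f_{R'}(t)$ are rational straight from the outer hypothesis, and the $2\times 2$ system is solved directly. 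The paper accordingly treats $R=J$ versus $R\subsetneq J$ as a plain case distinction rather than an induction.
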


\begin{proof}
	We prove the assertion by induction on $|J|$. If $J=\emptyset$ then 
	$h_{\emptyset,\emptyset,K}(t)$ is $W(t)$ if 
	$K=\emptyset$ and $0$ otherwise. 
	Now, we take $J\neq \emptyset$, we suppose that 
	$h_{T,H,K}(t)$ is rational for every $H\subsetneq J$ and every $T\subset H$ 
	and we prove that $h_{R,J,K}(t)$ is rational for every $R\subset J$.
	If $R=J$, by hypothesis $h_{J,J,K}(t)$ is rational while if $R\subsetneq J$ 
	we can use Corollary \ref{coroll:riduzione2}.\\
	\textit{First case:} $W_J^R$ finite and $R'=w(J,R)Rw(J,R)^{-1}$ different 
	from $R$.
	Since $|R|=|R'|$ and $\ell(w(J,R))=\ell(w(J,R'))$, by Corollary 
	\ref{coroll:riduzione2} we obtain
	\begin{align*}
	t^{\ell(w(J,R))}h_{R,J,K}(t)- (-1)^{|J|-|R|}h_{R',J,K}(t)&=
	f_{R'}(t)\\
	t^{\ell(w(J,R))}h_{R',J,K}(t)- (-1)^{|J|-|R|}h_{R',J,K}(t)&=
	f_{R}(t)\\	
	\end{align*}
	where, for $L\in\{R,R'\}$
	\[f_L(t)=\sum_{L\subset H\subsetneq J}(-1)^{|H|-|L|} h_{L,H,K}(t)\] is a 
	rational function by inductive hypothesis.
	We obtain
	\[h_{R,J,K}(t)=\frac{t^{\ell(w(J,R))}f_{R'}(t)+(-1)^{|J|-|R|}f_R(t)}
	{t^{2\ell(w_Jw_{R})}-1}\]
	which is a rational function.\\
	\textit{Second case}: $W_J^R$ finite and $R=w(J,R)Rw(J,R)^{-1}$.
	By Corollary \ref{coroll:riduzione2} we have
	\[h_{R,J,K}(t) 
	=\frac{f_R(t)}{t^{\ell(w(J,R))}-(-1)^{|J|-|R|}}\]
	which is a rational function.\\
	\textit{Third case}: $W_J^R$ infinite.
	By Corollary \ref{coroll:riduzione2} 
	we obtain $h_{R,J,K}(t) =(-1)^{|J|-|R|+1}f_R(t)$ which is a rational 
	function.
\end{proof}

Proposition \ref{prop:KJK} allows us to reduce the problem of studying the 
rationality of $p^{S'}_{K,J,K}(t)$ to the case where $|J|=|K|$.
Actually, given $J,K\subset S$, if there exists $x\in W$ such that  
$x\hat{K}=\hat{J}$ then  $|J|=|K|$ and $x\in {}^JW^K$. 
Hence, we have
\[p_{K,J,K}=h_{J,J,K}=\{x\in W\,|\, xKx^{-1}=J\}.\]
We observe that if $J=K$ then $p_{J,J,J}$ is a group and we have 
$N_W(W_J)=W_J\rtimes p_{J,J,J}$.
These sets are well studied in \cite{D82} (see also \cite{BH,B98}): let's 
summarize their results.

\begin{prop}\label{prop:Deod1}
	Let $R\subset J\subset S$ such that $W_J^R$ is finite. Then $R$ contains 
	every non-spherical component of $J$. 
\end{prop}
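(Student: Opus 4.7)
The plan is to argue by contrapositive: assuming some non-spherical component $J_1$ of $J$ is not contained in $R$, I show that $W_J^R$ is infinite. The approach has two pieces — an orthogonal-component reduction to a single irreducible component, and a short appeal to the structural description of the longest element of $W_H^J$ given in Remark \ref{rmk:wHJ}.

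For the reduction, I exploit the perpendicularity of distinct components of $J$. Set $R_1 = R \cap J_1$; by hypothesis $R_1 \subsetneq J_1$. Since $J_1 \perp (J\setminus J_1)$, the subsets $\hat{J}_1$ and $\hat{J}\setminus\hat{J}_1$ span orthogonal $W_J$-invariant subspaces of $V$, so
\[\bm\Phi_J = \bm\Phi_{J_1}\sqcup \bm\Phi_{J\setminus J_1}, \qquad \bm\Phi_R = \bm\Phi_{R_1}\sqcup \bm\Phi_{R\setminus R_1},\]
and consequently $\bm\Phi_J\setminus \bm\Phi_R \supseteq \bm\Phi_{J_1}\setminus \bm\Phi_{R_1}$. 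By Remark \ref{rmk:wHJ}, finiteness of $W_J^R$ is equivalent to finiteness of $\bm\Phi_J\setminus\bm\Phi_R$, so it suffices to prove that $\bm\Phi_{J_1}\setminus \bm\Phi_{R_1}$ is infinite.

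For this last step I argue by contradiction. If $\bm\Phi_{J_1}\setminus \bm\Phi_{R_1}$ were finite, then $W_{J_1}^{R_1}$ would be finite with a maximal-length element $w(J_1,R_1)$, and Remark \ref{rmk:wHJ} would force the union of those components of $J_1$ which meet $J_1\setminus R_1$ to be spherical. But $J_1$ is irreducible and $J_1\setminus R_1 \neq \emptyset$, so that union equals $J_1$ itself, which would therefore be spherical — contradicting the hypothesis that $J_1$ is non-spherical. The main obstacle is really identifying the right reduction: once the problem is localized inside the irreducible component $J_1$, Remark \ref{rmk:wHJ} delivers the contradiction immediately, bypassing any direct root-system enumeration.
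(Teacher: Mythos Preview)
Your argument is formally valid, but be aware that it is essentially circular relative to the paper: the step where you invoke Remark~\ref{rmk:wHJ} to conclude that the union of components of $J_1$ meeting $J_1\setminus R_1$ is spherical is precisely the statement of the proposition (applied to the pair $R_1\subset J_1$). Indeed, the sphericity clause in Remark~\ref{rmk:wHJ} --- ``$H_1$ is spherical'' --- unwound for a general pair $R\subset J$, says exactly that every non-spherical component of $J$ lies in $R$. In the paper this clause is not proved but cited from \cite{BH}, and the paper's own proof of the proposition is likewise just a citation to \cite{D82,BH}. So your argument and the paper's rest on the same external input; you have simply routed it through the Remark rather than citing the sources directly.

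Two further remarks. Your reduction to a single irreducible component is correct (the decomposition $\bm\Phi_J=\bm\Phi_{J_1}\sqcup\bm\Phi_{J\setminus J_1}$ and the equivalence ``$W_J^R$ finite $\Leftrightarrow$ $\bm\Phi_J\setminus\bm\Phi_R$ finite'' are fine), but it is unnecessary: applying the sphericity clause of Remark~\ref{rmk:wHJ} directly to the pair $R\subset J$ already yields the proposition in one line. And if you wanted a genuinely self-contained proof, the content you would need is that for an infinite irreducible $W_{J_1}$ and any proper $R_1\subsetneq J_1$ one has $\bm\Phi_{J_1}\setminus\bm\Phi_{R_1}$ infinite (equivalently $[W_{J_1}:W_{R_1}]=\infty$); this is exactly what the citations to \cite{D82,BH} supply and what neither the Remark nor your argument establishes independently.
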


\begin{proof}
	It is a consequence of the proof of \cite[Proposition 4.2]{D82} (see also 
	\cite[Proposition 2.2]{BH}).
\end{proof}

\begin{rmk}\label{rmk:Deod}
	Given $H\subset S$ and $s\in S\setminus H$, we recall that 
	$v:=w(H\cup\{s\},H)$ 
	exists if $W_{H\cup\{s\}}^H$ is finite.
	In this case, if $H'\cup\{s\}$ is the component of $H\cup\{s\}$ containing 
	$s$, 
	then it is spherical by Proposition \ref{prop:Deod1} and we have 
	$v=w(H'\cup\{s\},H')\in W_{H'\cup\{s\}}$.
	Hence, since  $(H'\cup\{s\}) \perp (H\setminus H')$, we have $v\in 
	W_{(H\setminus H')^\perp}$ 
	and in particular $v$ fixes all not-spherical components of $H$.
\end{rmk}

\begin{prop}[Proposition 5.5 of \cite{D82}]\label{prop:Deod2}
	Let $x\in W$ and $J,K\subset S$ such that $xKx^{-1}=J$. 
	Then there exists $s_1,\dots, s_m\in S$ and $J_0,J_1,\dots,J_m\subset 
	S$ such that 
	\begin{enumerate}[(i)]
		\item $J_0=K$ and $J_m=J$;
		\item $s_i\notin J_{i-1}$ and $w_i:=w(J_{i-1}\cup \{s_i\},J_{i-1})$ 
		exists 		for every $i=1,\dots, m$;
		\item $J_i=w_iJ_{i-1}w_i^{-1}$;
		\item $x=w_m\cdots w_1$ and $\ell(x)=\ell(w_1)+\cdots+\ell(w_m)$.
	\end{enumerate}
\end{prop}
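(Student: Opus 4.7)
The plan is to induct on $\ell(x)$, peeling off a last factor $w_1$ coming from a right descent of $x$. The base case $\ell(x)=0$ is trivial: then $x=1$, $J=K$, and $m=0$ works. For the inductive step, suppose $\ell(x)>0$. A crucial preliminary observation is that $x\in W^K$: since $x\hat{K}=\hat{J}\subset \Sigma\subset\bm\Phi^+$, no simple root $\hat{s}$ with $s\in K$ lies in $N(x)$, so $\ell(xs)>\ell(x)$ for every $s\in K$. Consequently $D_R(x)\cap K=\emptyset$, and because $\ell(x)>0$ we can choose some $s_1\in D_R(x)$; by the previous sentence, $s_1\notin K$.

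Next I build $w_1$. Apply Lemma \ref{lemma:AB} with $A=\hat{K}$, $w=x$, and $B=\{\hat{s}_1\}$: the hypotheses hold since $xA=\hat{J}\subset \Sigma$ and $\hat{s}_1\in N(x)\cap \Sigma$. The lemma yields $\bm\Phi^+_{K\cup\{s_1\}}\setminus \bm\Phi^+_K\subset N(x)$, which is finite. Hence $W_{K\cup\{s_1\}}^K$ is finite and $w_1:=w(K\cup\{s_1\},K)$ exists; by Remark \ref{rmk:wHJ}, $N(w_1)=\bm\Phi^+_{K\cup\{s_1\}}\setminus \bm\Phi^+_K\subset N(x)$. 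Lemma \ref{lemma:rightdivisor} applied to $x=(xw_1^{-1})w_1$ then gives $\ell(xw_1^{-1})=\ell(x)-\ell(w_1)$, and $\ell(w_1)>0$ because $s_1\notin K$.

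Now set $x'=xw_1^{-1}$ and $J_1=w_1Kw_1^{-1}$. By Remark \ref{rmk:wHJ}, $J_1\subset K\cup\{s_1\}\subset S$ and $|J_1|=|K|$, and one computes $x'J_1x'^{-1}=xKx^{-1}=J$. Since $\ell(x')<\ell(x)$, the inductive hypothesis applied to $x'$ with the pair $(J_1,J)$ produces $s_2,\dots,s_m\in S$ and $J_1,J_2,\dots,J_m=J$ satisfying conditions $(i)$--$(iv)$ for $x'$. Prepending the pair $(K,w_1)$ and renumbering with $J_0=K$ yields $x=w_m\cdots w_2w_1$ together with $\ell(x)=\ell(w_1)+\cdots+\ell(w_m)$, which is the required decomposition.

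The main obstacle is the existence of $w_1$: one must find a right descent $s_1$ of $x$ (necessarily outside $K$) for which $W_{K\cup\{s_1\}}^K$ is finite so that $w(K\cup\{s_1\},K)$ is defined. This is precisely where Lemma \ref{lemma:AB} does the real work, converting the hypothesis $x\hat{K}\subset \Sigma$ into the inclusion $\bm\Phi^+_{K\cup\{s_1\}}\setminus \bm\Phi^+_K\subset N(x)$ and thereby the required finiteness. Once $w_1$ is in place, the remaining verifications (that $J_1\subset S$, that $x'J_1x'^{-1}=J$, and that lengths add) are routine consequences of Remark \ref{rmk:wHJ} and Lemma \ref{lemma:rightdivisor}.
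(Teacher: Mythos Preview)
The paper does not supply its own proof of this proposition: it is stated as a citation of Deodhar's result (Proposition~5.5 of \cite{D82}) and left unproven. Your argument is correct and gives a clean self-contained proof using only tools already available in the paper. The induction on $\ell(x)$ with a right descent $s_1\in D_R(x)\setminus K$, the use of Lemma~\ref{lemma:AB} to force $\bm\Phi^+_{K\cup\{s_1\}}\setminus\bm\Phi^+_K\subset N(x)$ and hence finiteness of $W_{K\cup\{s_1\}}^K$, and the invocation of Lemma~\ref{lemma:rightdivisor} via $N(w_1)\subset N(x)$ to get $\ell(xw_1^{-1})=\ell(x)-\ell(w_1)$ are all sound; Remark~\ref{rmk:wHJ} then guarantees $J_1=w_1Kw_1^{-1}\subset S$ and $\ell(w_1)\geq 1$ (since $D_R(w_1)=\{s_1\}$), so the induction terminates. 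This is in fact the standard strategy used by Deodhar, so your proof is essentially the original one rephrased in the paper's language.
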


\begin{rmk}\label{rmk:Deod3}
	Let $x\in W$ and $J,K\subset S$ be such that $xKx^{-1}=J$. 
	Let $J_0,J_1,\dots,J_m$ be as in Proposition \ref{prop:Deod2}.
	By Remark \ref{rmk:Deod}, for every $i\in\{1,\dots,m\}$ all not-spherical 
	components of $J_{i-1}$ and $J_i$ coincide and, if $H$ is the union of all 
	these non-spherical components, then $x\in W_{H^\perp}$.
\end{rmk}

\begin{rmk}\label{rmk:K'J'K'}
Let $J,K\subset S$ such that $|J|=|K|$ and $p_{K,J,K}\neq\emptyset$.
Let $H$ be the union of all non-spherical components of $J$, that by Remark
\ref{rmk:Deod3} coincide with that of $K$.  
Then $J'=J\setminus H$ and $K'=K\setminus H$ are the unions of spherical 
components of $J$ and $K$, respectively. 
Given $x\in W$, Remark \ref{rmk:Deod3} implies that $xKx^{-1}=J$ if and 
only if $x\in W_{H^\perp}$ and $xK'x^{-1}=J'$, which means 
$p_{K,J,K}=p^{H^\perp}_{K',J',K'}.$
\end{rmk}

\begin{lemma}
	Let $K\subset S$ such that every irreducible component of $K$ is 
	not-spherical. 
	Then  the series $h_{R,J,K}(t)$ and $p_{K,J,K}(t)$ are rational function 
	for every $R\subset J\subset S$.
\end{lemma}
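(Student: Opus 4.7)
The plan is to compute $h_{H,H,K}(t)$ directly for $H\subset S$ with $|H|=|K|$, then invoke Proposition~\ref{prop:KJK} to lift the conclusion to arbitrary $h_{R,J,K}(t)$, and finally use the decomposition (\ref{eq:pKJK}) to conclude rationality of $p_{K,J,K}(t)$.

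For the first step I would observe that $h_{H,H,K}=\{x\in{}^HW^K\mid xKx^{-1}=H\}$. Since every component of $K$ is non-spherical, Remark~\ref{rmk:Deod3} applied to any $x$ with $xKx^{-1}=H$ asserts that the non-spherical components of $H$ and $K$ coincide, forcing $K\subset H$; together with $|H|=|K|$ this yields $H=K$. Hence $h_{H,H,K}=\emptyset$ whenever $H\neq K$. For $H=K$, Remark~\ref{rmk:Deod3} gives the inclusion $\{x\in W\mid xKx^{-1}=K\}\subset W_{K^\perp}$, and the reverse inclusion is immediate since each generator of $W_{K^\perp}$ commutes with every element of $K$. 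Moreover, reduced expressions of $x\in W_{K^\perp}$ use only generators in $K^\perp$, so no element of $K$ can be a left or right descent of $x$; hence $W_{K^\perp}\subset{}^KW^K$. We therefore obtain $h_{K,K,K}=W_{K^\perp}$, whose Poincar\'e series $W_{K^\perp}(t)$ is rational by~(\ref{eq:Wt}).

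By Proposition~\ref{prop:KJK}, the rationality of $h_{H,H,K}(t)$ for every $H\subset J$ with $|H|=|K|$ implies the rationality of $h_{R,J,K}(t)$ for every $R\subset J$. The partition (\ref{eq:pKJK}) then gives $p_{K,J,K}(t)=\sum_{R\subset J}h_{R,J,K}(t)$, a finite sum of rational functions and hence rational. The only delicate point in this argument is the identification $\{x\in W\mid xKx^{-1}=K\}=W_{K^\perp}$: a priori the normalizer of $W_K$ could contain nontrivial ``Deodhar twists'' $w(H\cup\{s\},H)$, but the non-spherical hypothesis on $K$ rules these out through Remark~\ref{rmk:Deod3}, making the normalizer coincide with the manifestly rational parabolic $W_{K^\perp}$.
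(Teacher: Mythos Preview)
Your proof is correct and follows essentially the same approach as the paper. The paper invokes Remark~\ref{rmk:K'J'K'} (which itself is a consequence of Remark~\ref{rmk:Deod3}) to conclude that $p_{K,H,K}=h_{H,H,K}$ is empty for $H\neq K$ and equals $W_{K^\perp}$ for $H=K$, then applies Proposition~\ref{prop:KJK} and (\ref{eq:pKJK}); you simply unpack Remark~\ref{rmk:K'J'K'} by appealing directly to Remark~\ref{rmk:Deod3}, arriving at the same identification $h_{K,K,K}=W_{K^\perp}$ and the same conclusion.
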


\begin{proof}
	Given $H\subset S$ such that $|H|=|K|$, by Remark \ref{rmk:K'J'K'} we have
	$p_{K,H,K}\neq \emptyset$ if and only if $H=K$ and 
	$p_{K,K,K}(t)=W_{K^\perp}(t)$. Hence, by Proposition 
	\ref{prop:KJK} and (\ref{eq:pKJK}), the series $h_{R,J,K}(t)$ and 
	$p_{K,J,K}(t)$ are rational function for every 
	$R\subset J\subset S$.
\end{proof}

\begin{lemma}
	Let $J,K\subset S$ such that $W_K$ is a maximal finite parabolic subgroup 
	of $W$. Then $p_{K,J,K}(t)=1$ if $K\subset J$ and $0$ otherwise.
\end{lemma}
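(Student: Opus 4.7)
The plan is to show that any $x \in p_{K,J,K}$ must be the identity, which will immediately give $p_{K,J,K}(t) = 1$ when $K \subset J$ and $0$ otherwise. By definition, $x \in p_{K,J,K}$ means $x \in {}^JW^K$ and $K \subset x^{-1}Jx$, so $R := xKx^{-1}$ is a well-defined subset of $J \subset S$, and $W_R = xW_Kx^{-1}$ is finite.

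I will then apply Proposition \ref{prop:Deod2} with $R$ playing the role of the proposition's $J$: there exist $s_1,\dots,s_m \in S$ and subsets $K = J_0, J_1,\dots, J_m = R$ of $S$ such that $w_i := w(J_{i-1}\cup\{s_i\}, J_{i-1})$ exists for every $i=1,\dots,m$, and $x = w_m\cdots w_1$. Suppose for contradiction $m \geq 1$. Then $w_1 = w(K\cup\{s_1\}, K)$ exists, so by Remark \ref{rmk:Deod} the component of $K\cup\{s_1\}$ containing $s_1$ is spherical. Since $K$ itself is spherical, every component of $K\cup\{s_1\}$ is spherical, hence $W_{K\cup\{s_1\}}$ is finite, contradicting the maximality of $W_K$ among finite parabolic subgroups of $W$.

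Therefore $m = 0$, which forces $x = 1$ and $R = K$. Since $R \subset J$, this gives $K \subset J$. Conversely, if $K \subset J$, then $1 \in {}^JW^K$ and $K \cap 1^{-1}J\,1 = K \cap J = K$, so $1 \in p_{K,J,K}$, and we conclude $p_{K,J,K} = \{1\}$ and $p_{K,J,K}(t) = 1$. If $K \not\subset J$, the argument above shows $p_{K,J,K} = \emptyset$, hence $p_{K,J,K}(t) = 0$. I do not foresee any genuine obstacle; the only point requiring a little care is to invoke Deodhar's structure result with the correct target subset $R = xKx^{-1}$ rather than the ambient $J$, and to observe that maximality of $W_K$ rules out even a single elementary step in the resulting chain.
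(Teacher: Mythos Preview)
Your proof is correct and follows essentially the same approach as the paper: both use Deodhar's Proposition~\ref{prop:Deod2} to show that any $x$ with $xKx^{-1}\subset S$ must be the identity, the key point being that no first step $w(K\cup\{s_1\},K)$ can exist when $W_K$ is a maximal finite parabolic. The only cosmetic difference is that you deduce finiteness of $W_{K\cup\{s_1\}}$ via Remark~\ref{rmk:Deod} on components, whereas the paper uses directly that $W_K$ finite and $W_{K\cup\{s_1\}}^K$ finite force $W_{K\cup\{s_1\}}$ to be finite.
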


\begin{proof}
	Let $x\in W$ such that $xKx^{-1}\subset S$. If $x\neq 1$, by 
	Proposition \ref{prop:Deod2} there should be an element $s_1\notin K$ such 
	that $w(K\cup\{s_1\},K)$ exists, but this is not possible since 
	$W_{K\cup\{s\}}$ is not finite.
	Hence, $x=1$ and so $p_{K,J,K}=\emptyset$ if $K\not\subset J$ and 
	$p_{K,J,K}=\{1\}$ if $K\subset J$.
\end{proof}

\subsection{The main theorem}
Finally, we summarize the previous results by stating the main theorem.
\begin{teor}\label{teor:equivalenze}
	Let $(W,S)$ be a Coxeter system. 
	The following are equivalent.
	\begin{enumerate}[(i)]
		\item $p^{S'}_{Q,J,K}(t)$ is a rational function for every 
		$Q,J,K\subset S'\subset S$;
		\item $p^{S'}_{Q,J,K}(t)$ is a rational function for every 
		$Q,J,K\subset S'\subset S$ with $|K|=|S'|-1$;
		\item $p^{S'}_{K,J,K}$ is a rational function for every $J,K\subset 
		S'\subset S$;
		\item $h^{S'}_{R,J,K}(t)$ is a rational function for every $J,K\subset 
		S'\subset S$ and $R\subset J$;
		\item $h^{S'}_{R,J,K}(t)$ is a rational function for every $J,K\subset 
		S'\subset S$, $R\subset J$ with $|J|=|S'|-1$;
		\item $p^{S'}_{K,J,K}(t)=h^{S'}_{J,J,K}(t)$ is a rational function for 
		every $J,K\subset S'\subset S$ with $|J|=|K|$;
		\item $p^{S'}_{K,J,K}(t)$ is a rational function for 
		every $J,K\subset S'\subset S$ with $|J|=|K|$ and $J,K$ spherical. 
	\end{enumerate}
Moreover, if these conditions hold, then ${}^JW_{S'}^K(t)$ is a rational 
function for every $J,K\subset S'\subset S$.
\end{teor}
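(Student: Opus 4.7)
The plan is to organize the seven conditions so that every ``easy'' direction becomes a pure specialization of indices, and every ``hard'' direction is already packaged into one of the tools proved earlier in the section. Explicitly, I would establish
\[\mathrm{(i)}\Leftrightarrow\mathrm{(ii)},\qquad \mathrm{(i)}\Leftrightarrow\mathrm{(iii)}\Leftrightarrow\mathrm{(vi)}\Leftrightarrow\mathrm{(iv)}\Leftrightarrow\mathrm{(v)},\qquad \mathrm{(vi)}\Leftrightarrow\mathrm{(vii)},\]
and then deduce the ``Moreover'' statement from (\ref{eq:JWK}). Among these, (i)$\Rightarrow$(ii), (i)$\Rightarrow$(iii), (iii)$\Rightarrow$(vi), (iv)$\Rightarrow$(v), (iv)$\Rightarrow$(vi), and (vi)$\Rightarrow$(vii) are all immediate restrictions of indices and require no argument; the content is in the reverse arrows.

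For (ii)$\Rightarrow$(i), I would exploit Remark \ref{rmk:M}: given $K\subset S'$, choose a chain $K=K_0\subsetneq K_1\subsetneq\cdots\subsetneq K_r=S'$ with $|K_{i+1}|=|K_i|+1$, and use the factorization $M_{K,S'}=M_{K_0,K_1}\cdots M_{K_{r-1},K_r}$; each factor $M_{K_i,K_{i+1}}$ has rational entries by (ii), so the product does too. The analogous argument via Remark \ref{rmk:N} yields (v)$\Rightarrow$(iv). For (iii)$\Rightarrow$(i), I would apply Proposition \ref{prop:QJK} inside each parabolic $(W_{S'},S')$: its hypothesis is rationality of $p^{S'}_{H,J,H}(t)$ for every $H\subset K$, which (iii) provides, and its conclusion is rationality of $p^{S'}_{Q,J,K}(t)$ for every $Q\subset K$. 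Symmetrically, (vi)$\Rightarrow$(iv) is a direct application of Proposition \ref{prop:KJK} inside $(W_{S'},S')$, using that $p^{S'}_{K,J,K}=h^{S'}_{J,J,K}$ when $|J|=|K|$. The implication (vi)$\Rightarrow$(iii) then follows by combining (vi)$\Rightarrow$(iv) with the disjoint decomposition (\ref{eq:pKJK}), which expresses $p^{S'}_{K,J,K}(t)=\sum_{R\subset J}h^{S'}_{R,J,K}(t)$ as a finite sum of rational functions.

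The last and most interesting step is (vii)$\Rightarrow$(vi), for which I would use Remark \ref{rmk:K'J'K'}. Given $J,K\subset S'$ with $|J|=|K|$ and $p^{S'}_{K,J,K}\neq\emptyset$, let $H$ be the common union of non-spherical components of $J$ and $K$, which coincide by Remark \ref{rmk:Deod3}, and let $J'=J\setminus H$, $K'=K\setminus H$ be the spherical complements. Applying the content of Remark \ref{rmk:K'J'K'} relatively to the parabolic system $(W_{S'},S')$ identifies $p^{S'}_{K,J,K}$ with $p^{H^\perp\cap S'}_{K',J',K'}$, in which $J'$ and $K'$ are spherical with $|J'|=|K'|$; rationality then follows from (vii). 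Once (i) is established, the ``Moreover'' assertion is immediate from (\ref{eq:JWK}), which expresses ${}^JW_{S'}^K(t)$ as a finite sum of $p^{S'}_{Q,J,K}(t)$. The main source of friction I anticipate is purely bookkeeping: each tool originally stated for $(W,S)$ (Propositions \ref{prop:QJK} and \ref{prop:KJK}, and Remark \ref{rmk:K'J'K'}) must be invoked inside arbitrary parabolic subsystems $(W_{S'},S')$, which is legitimate because every such subsystem is itself a Coxeter system of finite rank, but it is worth stating explicitly.
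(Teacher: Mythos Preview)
Your proposal is correct and follows essentially the same approach as the paper: the same tools (Propositions \ref{prop:QJK} and \ref{prop:KJK}, Remarks \ref{rmk:M}, \ref{rmk:N}, \ref{rmk:K'J'K'}, and equations (\ref{eq:pKJK}), (\ref{eq:JWK})) are invoked for exactly the same implications, with only cosmetic differences in how the chain of equivalences is threaded. Your explicit remark that each tool must be applied inside the parabolic subsystem $(W_{S'},S')$ is a useful clarification that the paper leaves implicit.
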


\begin{proof} We have $(iii)\Rightarrow (i)$ by Proposition \ref{prop:QJK}, 
$(vi)\Rightarrow (iv)$ by Proposition \ref{prop:KJK} 
	 and 	
	$(vii)\Rightarrow (vi)$ by Remark \ref{rmk:K'J'K'}.
	Hence, $(i),(iii),(iv),(vi),(vii)$ are equivalent since $(i)\Rightarrow 
	(vii)$ and	$(iv)\Rightarrow (iii)$ are obvious.
	Furthermore, $(i)\Rightarrow (ii)$ and $(iv)\Rightarrow (v)$ are obvious and
	so the following implications complete the proof.
	\begin{itemize}
		\item[$(ii)\Rightarrow (i)$]  Let $K\subset S'\subset S$ and let 
		$K=K_0\subset K_1\subset \cdots \subset K_r=S'$ with 
		$|K_i|=|K_{i-1}|+1$ for every $i\in\{1,\dots,r\}$.
		By hypothesis, the matrices $M_{K_{i-1},K_i}$ with $i\in\{1,\dots,r\}$ 
		have coefficients in $\Q(t)$ and by Remark \ref{rmk:M} we have 
		$M_{K,S'}=\prod_{i=1}^rM_{K_{i-1},K_i}$ which therefore has 
		coefficients in $\Q(t)$.
		\item[$(v)\Rightarrow (iv)$] Let $J\subset S'\subset S$ and let 
		$J=J_0\subset J_1\subset \cdots \subset J_r=S'$ with 
		$|J_i|=|J_{i-1}|+1$ for every $i\in\{1,\dots,r\}$.
		By hypothesis, the matrices $N_{J_{i-1},J_i}$ with $i\in\{1,\dots,r\}$ 
		have coefficients in $\Q(t)$ and by Remark \ref{rmk:N} we have 
		$N_{J,S'}=\prod_{i=1}^rN_{J_{i-1},J_i}$ 
		which therefore has coefficients in $\Q(t)$.\qedhere
	\end{itemize}
\end{proof}

\subsection{About the normalizer of a finite parabolic 
subgroup}\label{sec:normalizer}
In this paragraph, we discuss about the series appearing in $(vii)$ of Theorem 
\ref{teor:equivalenze}, that is the Poincaré series of 
\[p_{K,J,K}=\{x\in W\,|\, xKx^{-1}=J\}\]
where $J,K\subset S$ are spherical and $|J|=|K|$.

\medskip
As observed at the end of paragraph \ref{par:otherrelations}, these sets are 
well studied by in \cite{D82,BH,B98}.
In particular, if $J=K$ then 
$N_J:=p_{J,J,J}$ is a group, $N_W(W_J)=W_J\rtimes N_J$ and 
$N_W(W_J)(t)=W_J(t)N_J(t)$.
Moreover, the multiplication in $W$ induces a transitive left action of $N_J$ 
and a transitive right action of $N_K$ on $p_{K,J,K}$.
Hence, the study of the rationality of $p_{K,J,K}(t)$ is closely related to the 
study of the rationality of $N_J(t)$.
We know various description of $N_J$.
\begin{itemize}
	\item By \cite[Theorem B]{BH}, the group $N_J$ is the semidirect product 
	of a Coxeter group $\widetilde{N}_J$ and a group $M_J$ which acts on  
	$\widetilde{N}_J$ by permuting its Coxeter generating set.
	A similar description is given in \cite{B98}.
	\item By \cite[Theorem 5.7]{Nuida}, if 
	$\bm\Phi^{\perp J}=\{\gamma\in\bm\Phi\,|\, \gamma 
	\text{ is orthogonal to every }\alpha\in \hat{J}\}$ then	
	$N_J$ is the semidirect product of 	the reflection subgroup of $W$ 
	generated by 
	$\{ws_\alpha w^{-1}\,|\, w\alpha\in\bm\Phi ^{\perp J}\}$ and 
	the group 
	$\{w\in N_J\,|\, w(\bm\Phi^{\perp J})^+=(\bm\Phi^{\perp J})^+\}$.
\end{itemize}
Unfortunately, these decompositions of $N_J$ do not allow, for now, to 
determine whether its Poincaré series is rational or not.
Actually, both of these decompositions do not respect the length function of 
$W$, which means that the Poincaré series of $N_J$ is not the product of the 
Poincaré series of its  factors.

\medskip
Another way to prove the rationality of $p_{K,J,K}(t)$ would be to use 
Proposition \ref{prop:Deod2} which states that every $x\in p_{K,J,K}$ can be 
written as $x=w_m\cdots w_1$ where every $w_i$ is of the form 
$w(L\cup\{\alpha\},L)$ and $\ell(x)=\ell(w_1)+\cdots+\ell(w_m)$.

\section{Generating functions of rational cones}\label{section:coni}
In this section we introduce some definitions and properties about generating 
functions of rational simplicial cones (for more details see \cite[Chapter 
3]{BR}). We will use these results in section \ref{section:affine} to prove the 
rationality of ${}^JW^K(t)$ when $W$ is an affine Weyl group.

\medskip
Let $n\in\N$ and $X$ be a subset of $\R^n$.
We define the multivariate generating function of $X$ by
\[\sigma_X(\mathbf{z})=\sigma_X(z_1,\dots,z_n):=
\sum_{\mathbf{m}\in X\cap\Z^n}\mathbf{z}^\mathbf{m}\in \Z[[\mathbf{z}]]\]
where $\mathbf{z}^{\mathbf{m}}=z_1^{m_1}\cdots z_n^{m_n}$ for 
$\mathbf{m}=(m_1,\dots,m_n)\in\Z^n$.

\medskip
Let $d,n\in\N$ with $d\leq n$. A \emph{rational simplicial} $d$-cone 
in $\R^n$ is a set of the form 
\[\mathcal{C}=\mathrm{cone}(\mathbf{v}_1,\dots,\mathbf{v}_d) 
=\{\lambda_1\mathbf{v}_1+\cdots+\lambda_d\mathbf{v}_d\in\R^n\,|\, 
\lambda_1,\dots,\lambda_d\in\R_{\geq 0}\}\] 
where $\mathbf{v}_1,\dots,\mathbf{v}_d\in\Z^n$ are $\R$-linearly 
indipendent vectors. 
The dimension of $\mathcal{C}$ is $d$ and we denote it by $\dim(\mathcal{C})$.
The fundamental 
parallelepiped of $\mathcal{C}$ is the set 
\[\Pi(\mathcal{C})=\Pi(\mathbf{v}_1,\dots,\mathbf{v}_d) 
=\{\lambda_1\mathbf{v}_1+\cdots+\lambda_d\mathbf{v}_d\in\R^n
\,|\, 0\leq\lambda_1,\dots,\lambda_d<1\},\]
while the open cone associated to $\mathcal{C}$ is 
$\mathcal{C}^o=\{\lambda_1\mathbf{v}_1+\cdots+\lambda_d\mathbf{v}_d\in\R^n
\,|\, \lambda_1,\dots,\lambda_d\in\R_{>0}\}.$
If $d=0$, we put $\mathcal{C}=\Pi(\mathcal{C})=\mathcal{C}^o=\{(0,\dots,0)\}$.
We say that a rational simplicial cone $\mathcal{D}$ is contained in 
$\mathcal{C}$, and we write $\mathcal{D}\subset\mathcal{C}$, if 
$\mathcal{D}=\mathrm{cone}(X)$ with $X\subset 
\{\mathbf{v}_1,\dots,\mathbf{v}_d \}$.

\begin{lemma}\label{lemma:cono}
	For every rational simplicial cone we have
	$\displaystyle{		
	\mathcal{C}=\bigsqcup_{\mathcal{D}\subset\mathcal{C}}\mathcal{D}^o.}$
\end{lemma}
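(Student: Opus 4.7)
The plan is to use the uniqueness of the coordinate representation afforded by the $\R$-linear independence of $\mathbf{v}_1,\dots,\mathbf{v}_d$, which lets me assign to every point of $\mathcal{C}$ a canonical ``support'' among the generators.

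First I would fix $\mathcal{C}=\mathrm{cone}(\mathbf{v}_1,\dots,\mathbf{v}_d)$ and, for any $\mathbf{x}\in\mathcal{C}$, use linear independence to write $\mathbf{x}=\sum_{i=1}^d \lambda_i\mathbf{v}_i$ with uniquely determined $\lambda_i\geq 0$. Then I would set $I(\mathbf{x})=\{i\,|\,\lambda_i>0\}$ and $X(\mathbf{x})=\{\mathbf{v}_i\,|\,i\in I(\mathbf{x})\}$. By construction $\mathbf{x}\in \mathrm{cone}(X(\mathbf{x}))^o$, with the convention that $I(\mathbf{x})=\emptyset$ corresponds to $\mathbf{x}=\mathbf{0}$, which sits in the zero-dimensional open cone $\{(0,\dots,0)\}$. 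This shows $\mathcal{C}\subset \bigcup_{\mathcal{D}\subset\mathcal{C}}\mathcal{D}^o$, and the reverse inclusion is immediate from the definition of $\mathcal{D}\subset\mathcal{C}$.

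For the disjointness, I would argue that if $\mathbf{x}\in\mathcal{D}_1^o\cap\mathcal{D}_2^o$ for two subcones $\mathcal{D}_j=\mathrm{cone}(X_j)$ with $X_j\subset\{\mathbf{v}_1,\dots,\mathbf{v}_d\}$, then $\mathbf{x}$ admits expressions as strictly positive combinations of the vectors in $X_1$ and in $X_2$; completing both to a combination over all $\mathbf{v}_i$ (with zero coefficients for the missing generators) and invoking the uniqueness of the expansion forces $X_1=X_2=X(\mathbf{x})$, hence $\mathcal{D}_1=\mathcal{D}_2$.

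There is no real obstacle here: everything reduces to the uniqueness of coordinates in an $\R$-linearly independent set, together with careful handling of the degenerate case $d=0$ and the empty-support convention $\mathrm{cone}(\emptyset)^o=\{\mathbf{0}\}$. The only thing to be mildly careful about is distinguishing the \emph{subcone} relation $\mathcal{D}\subset\mathcal{C}$ used in the statement (which by definition means $\mathcal{D}=\mathrm{cone}(X)$ for some $X\subset\{\mathbf{v}_1,\dots,\mathbf{v}_d\}$, so there are exactly $2^d$ such $\mathcal{D}$) from the a priori weaker notion of set-theoretic inclusion, so that the index set of the disjoint union is unambiguous.
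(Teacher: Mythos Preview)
Your proof is correct and follows essentially the same approach as the paper's: both assign to each point $\mathbf{x}\in\mathcal{C}$ the subcone $\mathrm{cone}(\{\mathbf{v}_i\,|\,\lambda_i\neq 0\})$ determined by its support in the unique expansion $\mathbf{x}=\sum_i\lambda_i\mathbf{v}_i$, and both deduce disjointness from the linear independence of $\mathbf{v}_1,\dots,\mathbf{v}_d$. Your version is simply more explicit about the reverse inclusion, the $d=0$ convention, and the meaning of the relation $\mathcal{D}\subset\mathcal{C}$.
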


\begin{proof}
	For every 
	$\mathbf{v}=\lambda_1\mathbf{v}_1+\cdots+\lambda_d\mathbf{v}_d\in\mathcal{C}$,
	let $\mathcal{D}_{\mathbf{v}}= 
	\mathrm{cone}(\{\mathbf{v}_i\,|\,\lambda_i\neq 0\})$.
	Then $\mathbf{v}\in\mathcal{D}_{\mathbf{v}}^o$ and so 
	$\mathcal{C}=\bigcup_{\mathcal{D}\subset\mathcal{C}}\mathcal{D}^o$.
	The union is disjoint because $\mathbf{v}_1,\dots,\mathbf{v}_d$ are 
	linearly independent.
\end{proof}

\begin{prop}\label{prop:cono}
	Let $\mathcal{C}=\mathrm{cone}(\mathbf{v}_1,\dots,\mathbf{v}_d) $ be a 
	rational simplicial $d$-cone.
	\begin{enumerate}[(a)]
		\item The series $\sigma_\mathcal{C}(\mathbf{z})$ is a rational 
		function and 
		$\displaystyle{\sigma_\mathcal{C}(\mathbf{z})= 
		\sigma_{\Pi(\mathcal{C})}(\mathbf{z})
		\prod_{i=1}^{d}(1-\mathbf{z}^{\mathbf{v}_i})^{-1}.}$
		\item The series $\sigma_{\mathcal{C}^o}(\mathbf{z})$ is a rational 
		function and 
		$\displaystyle{	\sigma_{\mathcal{C}^o}(\mathbf{z})=
		\sum_{\mathcal{D}\subseteq\mathcal{C}}
		(-1)^{\dim(\mathcal{C})-\dim(\mathcal{D})}
		\sigma_{\mathcal{D}}(\mathbf{z})}.$
	\end{enumerate}
\end{prop}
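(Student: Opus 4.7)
The plan is to prove the two parts by, respectively, a tiling/decomposition argument and Möbius inversion on the Boolean lattice of sub-cones, treating rationality as an immediate consequence of the explicit formulas.

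For part $(a)$, the key observation is that every lattice point in $\mathcal{C}$ has a unique decomposition as a translate of a lattice point in the fundamental parallelepiped by a non-negative integer combination of the $\mathbf{v}_i$. Concretely: by $\R$-linear independence of $\mathbf{v}_1,\ldots,\mathbf{v}_d$, every $\mathbf{m}\in\mathcal{C}$ has a unique expression $\mathbf{m}=\sum_{i=1}^d\lambda_i\mathbf{v}_i$ with $\lambda_i\in\R_{\geq 0}$. Writing $\lambda_i=k_i+\mu_i$ with $k_i=\lfloor\lambda_i\rfloor\in\Z_{\geq 0}$ and $\mu_i\in[0,1)$ yields $\mathbf{m}=\sum_{i}k_i\mathbf{v}_i+\mathbf{p}$ with $\mathbf{p}=\sum_i\mu_i\mathbf{v}_i\in\Pi(\mathcal{C})$; since each $\mathbf{v}_i\in\Z^n$, if $\mathbf{m}\in\Z^n$ then automatically $\mathbf{p}\in\Z^n$, and uniqueness is forced by linear independence. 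Splitting the generating function according to this decomposition gives
\[
\sigma_\mathcal{C}(\mathbf{z})=\sum_{\mathbf{p}\in\Pi(\mathcal{C})\cap\Z^n}\mathbf{z}^\mathbf{p}\sum_{(k_1,\ldots,k_d)\in\Z_{\geq 0}^d}\prod_{i=1}^d(\mathbf{z}^{\mathbf{v}_i})^{k_i}=\sigma_{\Pi(\mathcal{C})}(\mathbf{z})\prod_{i=1}^d\frac{1}{1-\mathbf{z}^{\mathbf{v}_i}}.
\]
Since $\Pi(\mathcal{C})$ is bounded, $\Pi(\mathcal{C})\cap\Z^n$ is finite, so $\sigma_{\Pi(\mathcal{C})}(\mathbf{z})$ is a (Laurent) polynomial and the whole expression is a rational function.

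For part $(b)$, I would apply Lemma \ref{lemma:cono} not only to $\mathcal{C}$ but to each sub-cone $\mathcal{D}\subseteq\mathcal{C}$, obtaining $\sigma_\mathcal{D}(\mathbf{z})=\sum_{\mathcal{E}\subseteq\mathcal{D}}\sigma_{\mathcal{E}^o}(\mathbf{z})$. The sub-cones of $\mathcal{C}$ are in bijection with subsets of the generating set $\{\mathbf{v}_1,\ldots,\mathbf{v}_d\}$, with inclusion corresponding to set inclusion and dimension corresponding to cardinality, so this is the standard sum-over-subsets identity on a Boolean lattice. Möbius inversion on the Boolean lattice then yields
\[
\sigma_{\mathcal{C}^o}(\mathbf{z})=\sum_{\mathcal{D}\subseteq\mathcal{C}}(-1)^{\dim(\mathcal{C})-\dim(\mathcal{D})}\sigma_\mathcal{D}(\mathbf{z}),
\]
and rationality follows by combining this finite sum with part $(a)$ applied to each $\sigma_\mathcal{D}$.

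There is no real obstacle here; both statements reduce to standard combinatorial manipulations. The only point that deserves care is the unique decomposition argument in $(a)$, which essentially \emph{is} the content of the proposition: one must check simultaneously that $\mathbf{p}\in\Z^n$ and that the decomposition is forced, both of which rest on the linear independence of the generators. Once this is in place, part $(b)$ is formal.
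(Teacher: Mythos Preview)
Your proof is correct and matches the paper's approach essentially line for line: part~(a) via the unique floor/fractional-part decomposition of lattice points in the cone, and part~(b) via M\"obius inversion on the Boolean lattice of sub-cones using Lemma~\ref{lemma:cono}. The only cosmetic difference is that you spell out the Boolean-lattice structure explicitly, whereas the paper simply invokes M\"obius inversion directly.
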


\begin{proof}
	For $\lambda\in\R$ we write $\lfloor \lambda\rfloor$ for 
	its integer part and $\{\lambda\}=\lambda-\lfloor \lambda\rfloor$ for its 
	fractional part. 	
	Since $\mathbf{v}_1,\dots,\mathbf{v}_d$ are linearly independent, every 
	$\mathbf{m}\in\mathcal{C}\cap\Z^n$ can be written uniquely as 
	$\mathbf{m}=\lambda_1\mathbf{v}_1+\cdots+\lambda_d\mathbf{v}_d 
	=\lfloor\lambda_1\rfloor\mathbf{v}_1+\cdots+\lfloor\lambda_d\rfloor\mathbf{v}_d
	+\{\lambda_1\}\mathbf{v}_1+\cdots+\{\lambda_d\}\mathbf{v}_d$ with 
	$\lambda_1,\dots,\lambda_d\geq 0$. 
	In particular, $\{\lambda_1\}\mathbf{v}_1+\cdots+\{\lambda_d\}\mathbf{v}_d$
	belongs to $\Pi(\mathcal{C})\cap\Z^n$ since $\mathbf{m}$ and 
	$\lfloor\lambda_1\rfloor\mathbf{v}_1+\cdots+\lfloor\lambda_d\rfloor\mathbf{v}_d$
	are in $\Z^n$.
	Hence, every $\mathbf{m}\in\mathcal{C}\cap\Z^n$ can be written uniquely as 
	$\mathbf{m}=\mathbf{p}+k_1\mathbf{v}_1+\cdots+k_d\mathbf{v}_d$ with 
	$\mathbf{p}\in\Pi(\mathcal{C})\cap\Z^n$ and $k_1,\dots,k_d\in\N$ and so 
	\[\sigma_{\Pi(\mathcal{C})}(\mathbf{z})
	\prod_{i=1}^d(1-\mathbf{z}^{\mathbf{v}_i})^{-1}
	=\left(\sum_{\mathbf{p}\in\Pi(\mathcal{C})\cap\Z^n}\mathbf{z}^{\mathbf{p}}\right)
	\left(\sum_{k_1\in\N}\mathbf{z}^{k_1\mathbf{v_1}}\right)\cdots
	\left(\sum_{k_d\in\N}\mathbf{z}^{k_d\mathbf{v_d}}\right)
	=\sum_{\mathbf{m}\in\mathcal{C}\cap\Z^n}\mathbf{z}^{\mathbf{m}},\]
	which proves $(a)$.
	Now, by Lemma \ref{lemma:cono} we have 
	$\sigma_{\mathcal{C}}(\mathbf{z})=
	\sum_{\mathcal{D}\subset\mathcal{C}}
	\sigma_{\mathcal{D}^o}(\mathbf{z})$
	and using M\"obius inversion formula we obtain $(b)$.
\end{proof}

\begin{rmk}\label{rmk:vettoriridotti}
	The fundamental parallelepiped $\Pi(\mathcal{C})$ of a rational simplicial 
	cone $\mathcal{C}$ depends on the choice of vectors 
	$\mathbf{v}_1,\dots,\mathbf{v}_d$ who define $\mathcal{C}$.
	On the other hand, the series $\sigma_{\mathcal{C}}(\mathbf{z})$ and 
	$\sigma_{\mathcal{C}^o}(\mathbf{z})$ do not depend on this choice, so we 
	can choose each $\mathbf{v}_i$ so that the greatest common divisor of 
	its coefficients is 1.
\end{rmk}

\section{Affine Weyl groups}\label{section:affine}
In this section we focus on the case when $W$ is an affine Weyl group. The 
objective is to prove that, in some special cases, the series $p_{Q,J,K}(t)$ 
and ${}^JW^K(t)$ are rational functions.

\medskip
Even if we have proved Theorem \ref{teor:equivalenze} for general Coxeter 
groups, we will not use it in the case of affine Weyl group. 
However, we will use Proposition \ref{prop:razionale2} and results of 
section \ref{section:coni}. 
In paragraphs \ref{par:root} and \ref{par:affineWeyl} we recall the definitions 
of finite root systems and of affine Weyl groups, while in paragraph 
\ref{par:rationalityaffine} we state and prove the main theorem of this section 
(Theorem \ref{teor:riassunto}).

\subsection{Root systems}\label{par:root}
Let $V$ be a finite dimensional real vector space, let $V^{\vee}$ be its dual 
vector space and let $\langle\cdot,\cdot\rangle:V\times V^{\vee}\rightarrow 
\mathbb{R}$ be the natural pairing.
A subset $\bm\Phi$ of $V$ is a \emph{finite root system} in $V$ if it is 
finite, 
it spans $V$, $0\notin\bm\Phi$ and for every $\alpha\in\bm\Phi$ there 
exists a unique $\alpha^{\vee}\in V^{\vee}$ such that $\langle 
\alpha,\alpha^{\vee}\rangle=2$, 
$\langle\bm\Phi,\alpha^{\vee}\rangle\subset\mathbb{Z}$ and 
$s_\alpha:x\mapsto x-\langle x,\alpha^{\vee}\rangle\alpha$ maps $\bm\Phi$ 
into $\bm\Phi$. 	
The elements of $\bm\Phi$ are called \emph{roots}.
We say that a finite root system $\bm\Phi$ in $V$ is \emph{reduced} if 
$\mathbb{Z}\alpha\cap R=\{\pm\alpha\}$ for every $\alpha\in\bm\Phi$ and it is 
\emph{indecomposable} if it is not the disjoint union of two non-empty root 
systems. 

\medskip
We fix a reduced finite root systems $\bm\Phi$ in $V$.
The \emph{Weyl group} $\mathcal{W}=\mathcal{W}(\bm\Phi)$ of $\bm\Phi$ is 
the subgroup of $\mathrm{Aut}(V)$ generated by the reflections $s_\alpha$ 
for $\alpha\in\bm\Phi$.

\medskip
A subset $\Sigma$ of $\bm\Phi$ is a \emph{base} for $\bm\Phi$ if it is a 
base for $V$ and if every $\alpha\in\bm\Phi$ is a linear combination of 
elements of $\Sigma$ with all non-negative or all non-positive integer 
coefficients.
The elements of $\Sigma$ are called \emph{simple roots}.
Let $\bm\Phi^{+}$ be the set of roots which are linear 
combination of simple roots with all non-negative coefficients and let 
$\bm\Phi^-=\{-\alpha\,|\,\alpha\in\bm\Phi^+\}$.
The elements of $\bm\Phi^{+}$ are called \emph{positive roots} while the 
elements of $\bm\Phi^{-}$ are called \emph{negative roots}. 
Clearly we have $\bm\Phi=\bm\Phi^{+}\sqcup\bm\Phi^{-}$.
The group $\mathcal{W}(\bm\Phi)$ acts 
faithfully on $\bm\Phi$, it is generated by the reflections 
$\mathcal{S}=\mathcal{S}(\Sigma)=\{s_\alpha\,|\,\alpha\in\Sigma\}$ and 
$(\mathcal{W},\mathcal{S})$ is a finite Coxeter system.
Moreover, there exists a unique root 
$\widetilde{\alpha}=\sum_{\alpha\in\Sigma}n_\alpha\alpha\in\bm\Phi^+$, called 
\emph{highest root}, such that for every 
$\beta=\sum_{\alpha\in\Sigma}m_\alpha\alpha\in\bm\Phi$ we have $n_\alpha\geq 
m_\alpha$ for every $\alpha\in\Sigma$.

\medskip
Let $\bm\Phi^{\vee}$ be the set of $\alpha^{\vee}$ for $\alpha\in\bm\Phi$. 
Then $\bm\Phi^{\vee}$ is a finite root system in $V^{\vee}$, called 
\emph{inverse (or dual) root system}, and its elements are called 
\emph{coroots}.
Moreover, if $\Sigma$ is a base for $\bm\Phi$ then $\Sigma^{\vee}=\{\alpha^{\vee}\in\bm\Phi^{\vee}\,|\,\alpha\in \Sigma\}$ is a base for $\bm\Phi^{\vee}$.

\subsection{Affine Weyl groups}\label{par:affineWeyl}
We fix an irreducible reduced finite root system $\bm\Phi$ in $V$ and a base 
$\Sigma$ 
for it.
We denote by $\mathrm{Aff}(V^{\vee})$ the group of affine transformations of 
the vector space $V^{\vee}$.

\medskip
We define $\bm\Phi_{\mathrm{aff}}:=\bm\Phi\times \Z$ and we call \emph{affine 
roots} its elements.
 The \emph{affine Weyl group} of $\bm\Phi$ is the subgroup 
 $\widetilde{\mathcal{W}}=\widetilde{\mathcal{W}}(\bm\Phi)$ of 
 $\mathrm{Aff}(V^{\vee})$ generated by the set of \emph{affine reflections}
\[s_{\alpha,k}:x\mapsto 
x-(\langle\alpha,x\rangle-k)\alpha^{\vee}\]
with $(\alpha,k)\in\bm\Phi_{\mathrm{aff}}$.
If $\widetilde{\alpha}$ is the highest root of $\mathcal{W}(\bm\Phi)$, then 
the set 
\begin{equation}\label{eq:generatoriWtilda}
\widetilde{\mathcal{S}}=\widetilde{\mathcal{S}}(\Sigma)= 
\{s_{\alpha,0}\,|\,\alpha\in\Sigma\}\cup 
\{s_{-\widetilde{\alpha},1}\}
\end{equation} 
generates $\widetilde{\mathcal{W}}$ and 
$(\widetilde{\mathcal{W}},\widetilde{\mathcal{S}})$ is a Coxeter system (see 
\cite[\S4.6]{Hum}).
We identify $\mathcal{S}$ with $\{s_{\alpha,0}\,|\,\alpha\in\Sigma\}$ and 
$\mathcal{W}(\bm\Phi)$ with the subgroup of $\widetilde{\mathcal{W}}$ generated 
by these elements.

\medskip
Let $L^{\vee}$ be the coroot lattice, i.e. the $\mathbb{Z}$-span of 
$\bm\Phi^{\vee}$. The abelian subgroup 
\[\Lambda=\{t(v):x\mapsto x+v\,|\,v\in L^{\vee}\}\] 
of $\mathrm{Aff}(V^{\vee})$ is contained in
$\widetilde{\mathcal{W}}$ since $s_{\alpha,k}\circ 
s_{\alpha,0}:x\mapsto x+k\alpha^{\vee}$ for every $\alpha\in\bm\Phi$ and 
$k\in\mathbb{Z}$.
Moreover, for every $\alpha\in\Sigma$ and every $v\in L^{\vee}$ we have 
$s_{\alpha,0}t(v)s_{\alpha,0}=t(s_{\alpha,0}(v))$ and so $\Lambda$ is normal in 
$\widetilde{\mathcal{W}}$.
By \cite[Proposition 4.2]{Hum}, the group $\widetilde{\mathcal{W}}$ is the 
semidirect product of $\mathcal{W}$ and $\Lambda$.
	
\medskip
Let $\mathcal{H}$ be the collection of the hyperplanes 
$H_{\alpha,k}=\{x\in V^{\vee}\,|\,\langle \alpha,x\rangle=k\}$
with $(\alpha,k)\in \bm\Phi_\mathrm{aff}$ and let $\mathcal{A}$ be the 
collection of connected components, called \emph{alcoves}, of 
$V^{\vee}\setminus 
\bigcup_{H\in\mathcal{H}}H$.
The group $\widetilde{\mathcal{W}}$ acts faithfully on $\bm\Phi_\mathrm{aff}$ 
and transitively on $\mathcal{A}$ (see \cite[\S4.3]{Hum}).
Since we have fixed a base $\Sigma$ for $\bm\Phi$, we call \emph{fundamental 
alcove} the set
$A_0=\{x\in V^{\vee}\,|\,\langle\alpha,x\rangle >0\;\forall 
\alpha\in\Sigma \text{ and } \langle\widetilde{\alpha},x\rangle<1\}.$
Let 
$\bm\Phi_{\mathrm{aff}}^+=\{(\alpha,k)\in\bm\Phi_{\mathrm{aff}}\,|\, 
\langle\alpha,x\rangle+k>0 \;\forall x\in A_0\}$
be the set of \emph{affine postive roots}. 
In particular (see \cite[\S2]{Mac}), we have
$\bm\Phi_{\mathrm{aff}}^+=\{(\alpha,n+\chi(\alpha))\,|\,\alpha\in\bm\Phi, 
n\in\mathbb{N}\}$
where $\chi$ is the characteristic function of $\bm\Phi^-$ (i.e. 
$\chi(\alpha)=0$ if $\alpha\in\bm\Phi^+$ and $\chi(\alpha)=1$ if 
$\alpha\in\bm\Phi^-$). Let 
$\bm\Phi_{\mathrm{aff}}^-=\bm\Phi_{\mathrm{aff}}\setminus\bm\Phi_{\mathrm{aff}}^+$
 be the set of \emph{affine negative roots}. By \cite[\S2]{Mac} and 
 \cite[\S4]{Hum} we have the following lemma.

\begin{lemma}\label{lemma:length}
The length of $x\in \widetilde{\mathcal{W}}$ can be 
equivalently computed as:
\begin{enumerate}
	\item the length of a reduced expression $x=s_1\cdots s_n$ with 
	$s_i\in\widetilde{\mathcal{S}}$;
	\item the cardinality of $\bm\Phi_{\mathrm{aff}}^+\cap 
	x^{-1}\bm\Phi_{\mathrm{aff}}^-$;
	\item the cardinality of $\{H\in\mathcal{H}\,|\,H \text{ separates 
	}A_0 \text{ and }xA_0\}$;
	\item $\sum_{\alpha\in\bm\Phi^+}|\langle\alpha,v\rangle+\chi(w\alpha)|$
	if $x=wt(v)$ with $w\in\mathcal{W}$ and $v\in L^{\vee}$.
\end{enumerate}
\end{lemma}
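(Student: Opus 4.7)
The plan is to establish the chain (1) $\Leftrightarrow$ (2) $\Leftrightarrow$ (3) $\Leftrightarrow$ (4). The first two equivalences are essentially formal; the last is a direct geometric count.

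For (1) $\Leftrightarrow$ (2), since $(\widetilde{\mathcal{W}}, \widetilde{\mathcal{S}})$ is itself a Coxeter system whose canonical reflection representation uses $\bm\Phi_{\mathrm{aff}}$ as root system and $\bm\Phi^+_{\mathrm{aff}}$ as positive subsystem, the general Coxeter-group identity $\ell(x) = |\bm\Phi^+ \cap x^{-1}\bm\Phi^-|$ recalled in Section \ref{sec:Coxeter} applies verbatim.

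For (2) $\Leftrightarrow$ (3), I would use the bijection $\mathcal{H} \leftrightarrow \bm\Phi^+_{\mathrm{aff}}$ that assigns to $H_{\alpha,k} \in \mathcal{H}$ the unique positive affine root in the pair $\pm(\alpha,-k)$. A positive affine root $(\alpha,k)$ corresponds to an affine linear functional $y \mapsto \langle\alpha,y\rangle + k$ that is positive on $A_0$, and this functional is negative on $xA_0$ iff its pullback by $x$ is a negative affine root, i.e.\ iff $(\alpha,k) \in x\cdot\bm\Phi^-_{\mathrm{aff}}$. Since $\ell(x) = \ell(x^{-1})$, the sets $\bm\Phi^+_{\mathrm{aff}} \cap x\bm\Phi^-_{\mathrm{aff}}$ and $\bm\Phi^+_{\mathrm{aff}} \cap x^{-1}\bm\Phi^-_{\mathrm{aff}}$ have the same cardinality, which is the content of (3).

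For (3) $\Leftrightarrow$ (4), I would group the separating hyperplanes by direction $\alpha\in\bm\Phi^+$ and count. Since every $\alpha\in\bm\Phi^+$ is coefficient-wise bounded by the highest root $\widetilde{\alpha}$ and $0 < \langle\widetilde{\alpha},\cdot\rangle < 1$ on $A_0$, the function $\langle\alpha,\cdot\rangle$ takes values in $(0,1)$ on $A_0$. Writing $x = wt(v)$, one computes $\langle\alpha,xz\rangle = \langle w^{-1}\alpha, z\rangle + \langle w^{-1}\alpha, v\rangle$ for $z\in A_0$, so the values on $xA_0$ lie in $(m,m+1)$ when $w^{-1}\alpha\in\bm\Phi^+$ and in $(m-1,m)$ when $w^{-1}\alpha\in\bm\Phi^-$, with $m = \langle w^{-1}\alpha, v\rangle \in \Z$. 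An elementary count of integers $k$ for which $H_{\alpha,k}$ separates the two intervals yields $|m|$ in the first case and $|m-1|$ in the second. Reindexing by $\beta = w^{-1}\alpha$ in the first case and by $\beta = -w^{-1}\alpha$ in the second (both of which land in $\bm\Phi^+$) rewrites the sum as $\sum_{\beta\in\bm\Phi^+}|\langle\beta,v\rangle + \chi(w\beta)|$, the sign flip in the second reindexing being exactly what converts $|m-1|$ into $|\langle\beta,v\rangle + 1|$.

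The main obstacle will be the bookkeeping in the last step: the count naturally splits according to the sign of $w^{-1}\alpha$, but the stated formula is indexed by $\bm\Phi^+$ and uses $\chi(w\beta)$. Getting the characteristic function to land on $w\beta$ rather than on $w^{-1}\beta$ or some other variant requires choosing the correct sign in the second reindexing, and this is where the computation is easiest to mis-do.
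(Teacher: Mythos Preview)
The paper does not give its own proof of this lemma: it simply states ``By \cite[\S2]{Mac} and \cite[\S4]{Hum} we have the following lemma'' and moves on. So there is no in-paper argument to compare your proposal against.

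That said, your argument is correct and is essentially the one found in those references. A small remark on (1)$\Leftrightarrow$(2): invoking the general Coxeter identity $\ell(x)=|N(x)|$ requires knowing that $\bm\Phi_{\mathrm{aff}}$, with the positive system $\bm\Phi_{\mathrm{aff}}^+$ described in the paper, really is the root system of the Coxeter system $(\widetilde{\mathcal{W}},\widetilde{\mathcal{S}})$ in the sense of Section~\ref{sec:Coxeter}. This identification is standard (it is in Humphreys \S4 and Macdonald \S2), but it is the only place where something beyond a formal manipulation is being used. Your computation for (3)$\Leftrightarrow$(4) is clean: the reindexing by $\beta=w^{-1}\alpha$ when $w^{-1}\alpha\in\bm\Phi^+$ and by $\beta=-w^{-1}\alpha$ when $w^{-1}\alpha\in\bm\Phi^-$ is exactly right, and in both cases $\chi(w\beta)$ comes out to the correct value ($0$ and $1$ respectively) since $w\beta=\pm\alpha$ with $\alpha\in\bm\Phi^+$.
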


\subsection{Rationality of ${}^J\widetilde{\mathcal{W}}^K$}
\label{par:rationalityaffine}
In this paragraph, initially we show that the rationality of $p_{Q,J,K}(t)$ 
with $Q,J,K\subset \mathcal{S}$ depends on the rationality of 
$p_{Q,\mathcal{S},\mathcal{S}}(t)$ with $Q\subset \mathcal{S}$ and after we 
prove the rationality of the last using the results of section 
\ref{section:coni}.
We recall that every proper parabolic subgroup of $\widetilde{\mathcal{W}}$ is 
finite and so $p^{\mathcal{S}'}_{Q,J,K}(t)$ is a polynomial for every 
$Q,J,K\subset \mathcal{S}'\subsetneq \mathcal{S}$.

\medskip
First of all, we want to understand,  using the 
isomorphism $\widetilde{\mathcal{W}}\cong \mathcal{W}\ltimes \Lambda$, how the 
elements of 
${}^\mathcal{S}\widetilde{\mathcal{W}}^\mathcal{S}$ looks like and which 
properties they satisfy (Proposition \ref{prop:pQSS} and Lemma 
\ref{lemma:SWS}).

\begin{lemma}\label{lemma:costanteclassi}
	Let $\lambda\in\Lambda$. 
	\begin{enumerate}[(a)]
		\item $\ell$ is constant on the 
		$\mathcal{W}$-conjugacy class $\{w\lambda w^{-1}\,|\,w\in 
		\mathcal{W}\}$ of $\lambda$;
		\item $C_\mathcal{W}(\lambda)=\mathcal{W}\cap 
		\lambda^{-1}\mathcal{W}\lambda$ and 
		$C_\mathcal{S}(\lambda)=\mathcal{S}\cap 
		\lambda^{-1}\mathcal{S}\lambda$;
		\item $C_\mathcal{S}(\lambda)\subset A_L(\lambda)$;
		\item If $\mathcal{S}\subset A_R(\lambda)$ then 
		$A_L(\lambda)\cap\mathcal{S}=C_\mathcal{S}(\lambda)$.
	\end{enumerate}
\end{lemma}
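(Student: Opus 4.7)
The unifying strategy is to translate everything into numerical conditions on $v$ via the coordinate formula in Lemma \ref{lemma:length}(4). Writing $\lambda = t(v)$ with $v \in L^{\vee}$, that formula specializes to $\ell(\lambda) = \sum_{\alpha \in \bm\Phi^+} |\langle \alpha, v\rangle|$. The second ingredient is the semidirect product structure: $\widetilde{\mathcal{W}} = \mathcal{W} \ltimes \Lambda$ with $w\, t(u)\, w^{-1} = t(wu)$, and uniqueness of writing an element as $w\,t(u)$.

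For (a), conjugation yields $w\lambda w^{-1} = t(wv)$, so the length formula gives $\ell(w\lambda w^{-1}) = \sum_{\alpha \in \bm\Phi^+} |\langle w^{-1}\alpha, v\rangle|$. Since the function $\beta \mapsto |\langle \beta, v\rangle|$ is invariant under $\beta \mapsto -\beta$ and since $w^{-1}\bm\Phi^+ \sqcup w^{-1}\bm\Phi^- = \bm\Phi$, the sum is unchanged, giving $\ell(w\lambda w^{-1}) = \ell(\lambda)$. For (b), if $w \in \mathcal{W}$ satisfies $\lambda w \lambda^{-1} \in \mathcal{W}$, a direct computation using normality of $\Lambda$ gives $\lambda w \lambda^{-1} = t(v - wv)\, w$; uniqueness of the semidirect product decomposition forces $v = wv$, which is equivalent to $w$ commuting with $\lambda$. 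Conversely, commutation is obvious. The same reasoning restricted to simple reflections gives the $\mathcal{S}$-version, using also that $\mathcal{W} \cap \Lambda = \{1\}$ to conclude that the conjugate of a simple reflection landing in $\mathcal{S}$ must equal the original.

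For (c) and (d), the plan is to pin down the exact length changes under multiplication by a simple reflection. Rewriting $s_\alpha \lambda = s_\alpha t(v)$ and $\lambda s_\alpha = s_\alpha t(s_\alpha v)$ in canonical form and applying Lemma \ref{lemma:length}(4), together with the standard fact that $s_\alpha$ sends $\alpha$ to $-\alpha$ and permutes $\bm\Phi^+ \setminus \{\alpha\}$, I expect to obtain
\[
\ell(s_\alpha \lambda) - \ell(\lambda) = |1 + \langle \alpha, v\rangle| - |\langle \alpha, v\rangle|, \qquad \ell(\lambda s_\alpha) - \ell(\lambda) = |1 - \langle \alpha, v\rangle| - |\langle \alpha, v\rangle|.
\]
Since $\langle \alpha, v\rangle \in \Z$, these are positive if and only if $\langle \alpha, v\rangle \geq 0$ and $\langle \alpha, v\rangle \leq 0$ respectively. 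Combined with the characterization $s_\alpha \in C_{\mathcal{S}}(\lambda) \Leftrightarrow \langle \alpha, v\rangle = 0$ from (b), part (c) follows at once (if $s_\alpha$ commutes with $\lambda$ then the increment formula gives $\ell(s_\alpha \lambda) = \ell(\lambda) + 1$), and part (d) follows because the hypothesis $\mathcal{S} \subset A_R(\lambda)$ means $\langle \alpha, v\rangle \leq 0$ for all $\alpha \in \Sigma$, so the further condition $s_\alpha \in A_L(\lambda)$ (i.e.\ $\langle \alpha, v\rangle \geq 0$) forces equality. The only real obstacle is careful bookkeeping in rewriting $\lambda s_\alpha$ in the $wt(u)$ form before applying the length formula; everything else is formal.
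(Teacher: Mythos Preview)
Your argument is correct. Parts (a) and (b) are essentially the same as the paper's, phrased slightly differently: the paper checks (a) on simple reflections and uses the projection $\pi:\widetilde{\mathcal{W}}\to\mathcal{W}$ for (b), while you argue directly for general $w$ and use uniqueness of the semidirect product decomposition; these are interchangeable.

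Parts (c) and (d), however, take a genuinely different route. The paper proves (c) via the alcove interpretation of length (item 3 of Lemma~\ref{lemma:length}): if $\langle\alpha,v\rangle=0$ then $H_{\alpha,0}$ does not separate $A_0$ from $\lambda A_0$, hence it does separate $A_0$ from $s_\alpha\lambda A_0$. For (d) the paper argues by contradiction through the double-coset machinery: assuming $s\in A_L(\lambda)\cap\mathcal{S}$ with $s\notin C_{\mathcal{S}}(\lambda)$ and setting $J=C_{\mathcal{S}}(\lambda)\cup\{s\}$, one gets $\lambda\in{}^J\widetilde{\mathcal{W}}^{\mathcal{S}}$ with $\mathcal{S}\cap\lambda^{-1}J\lambda=C_{\mathcal{S}}(\lambda)$, and Lemma~\ref{lemma:decwtilde} then forces $\ell(s\lambda s)=\ell(\lambda)+2$, contradicting (a). Your approach instead extracts the explicit increments
\[
\ell(s_\alpha\lambda)-\ell(\lambda)=|1+\langle\alpha,v\rangle|-|\langle\alpha,v\rangle|,\qquad
\ell(\lambda s_\alpha)-\ell(\lambda)=|1-\langle\alpha,v\rangle|-|\langle\alpha,v\rangle|,
\]
directly from item 4 of Lemma~\ref{lemma:length}, and reads off (c) and (d) as integer inequalities. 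This is more elementary and self-contained (it avoids any appeal to Lemma~\ref{lemma:decwtilde}); the paper's argument, on the other hand, illustrates how (d) is really a shadow of (a) filtered through the double-coset length additivity.
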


\begin{proof} Let $\lambda=t(v)$ with $v\in L^{\vee}$.
	\begin{enumerate}[$(a)$]
		\item Let $s=s_{\beta,0}\in \mathcal{W}$. 
		Using 4 of Lemma \ref{lemma:length} we have $\ell(s 
		\lambda s)= 
		\ell(t(sv))=\sum_{\alpha\in\bm\Phi^+}|\langle\alpha,sv\rangle|=
		\sum_{\alpha\in\bm\Phi^+}|\langle s\alpha,v\rangle|=	
		\sum_{\alpha\neq \beta}|\langle \alpha,v\rangle|+|\langle 
		-\beta,v\rangle|=\ell(\lambda)$ since
		$s$ fixes $\bm\Phi^+\setminus\{\beta\}$ and 
		$s(\beta)=-\beta$.
		\item If $w=\lambda^{-1} w'\lambda\in 
		\mathcal{W}\cap \lambda^{-1} \mathcal{W}\lambda$ then, applying the 
		natural projection $\pi:\widetilde{\mathcal{W}}\rightarrow 
		\mathcal{W}$, we obtain $w=\pi(w)=\pi(\lambda^{-1} w'\lambda)=w'$ and 
		so $w\in C_\mathcal{W}(\lambda)$. On the other hand if $w''\in 
		C_\mathcal{W}(\lambda)$ then $\lambda^{-1} w''\lambda=w''$ and so  
		$w''\in \mathcal{W}\cap \lambda^{-1} \mathcal{W}\lambda$.
		\item 	Let $s=s_{\alpha,0}\in 
		C_\mathcal{S}(\lambda)$.
		Since $s\lambda s=\lambda$ we have $\langle \alpha,v\rangle=0$. This 
		implies that $A_0$ and $\lambda A_0$ lie on the same side of 
		$H_{\alpha,0}$.  Hence, $H_{\alpha,0}$ separates $A_0$ and $s\lambda 
		A_0$ and so $\ell(s\lambda)=\ell(\lambda)+1$ by 3 of Lemma 
		\ref{lemma:length}.
		\item Suppose by 
		contradiction that exists $s\in 
		A_L(\lambda)\cap\mathcal{S}$ which is not in $C_\mathcal{S}(\lambda)$.
		Let $J=C_\mathcal{S}(\lambda)\cup\{s\}$. Then 
		$\lambda\in{}^J\widetilde{\mathcal{W}}^\mathcal{S}$ and 
		$\mathcal{S}\cap \lambda^{-1}J\lambda=\mathcal{S}\cap 
		(C_\mathcal{S}(\lambda)\cup\{\lambda^{-1}s\lambda\})= 
		C_\mathcal{S}(\lambda)$ since $\lambda^{-1}s\lambda\notin \mathcal{S}$ 
		by $(b)$. 
		Hence, by Lemma \ref{lemma:decwtilde} we have $\ell(s\lambda 
		s)=\ell(\lambda)+2$ which is in contradiction with $(a)$.  
		This implies $A_L(\lambda)\cap\mathcal{S}\subset 
		C_\mathcal{S}(\lambda)$ and so the equality by $(c)$. \qedhere
	\end{enumerate}
\end{proof}

\begin{lemma}\label{lemma:WS}
	Let $x=v\lambda \in {}^\mathcal{S}\widetilde{\mathcal{W}}$ with 
	$v\in\mathcal{W}$ and $\lambda\in\Lambda$. Then
	\begin{multicols}{3}
		\begin{enumerate}[(a)]
			\item $\ell(x)=\ell(\lambda)-\ell(v)$;
			\item $A_L(\lambda)\cap\mathcal{S}=A_R(v)\cap\mathcal{S}$;
			\item $C_\mathcal{W}(\lambda)=\mathcal{W}\cap x^{-1}\mathcal{W}x$.
		\end{enumerate}
	\end{multicols}
\end{lemma}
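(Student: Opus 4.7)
The plan is to exploit the single structural fact that $x \in {}^\mathcal{S}\widetilde{\mathcal{W}}$ means $x$ is the minimal-length representative of its right $\mathcal{W}$-coset (here $W_\mathcal{S} = \mathcal{W}$), so that multiplying $x$ on the left by any element of $\mathcal{W}$ produces length-additivity via formula (\ref{eq:lunghezze}). Combined with the decomposition $\widetilde{\mathcal{W}} = \mathcal{W} \ltimes \Lambda$ and Lemma \ref{lemma:costanteclassi}, all three assertions will fall out almost immediately.

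For (a), I would write $\lambda = v^{-1}x$ and note that $v^{-1} \in \mathcal{W} = W_\mathcal{S}$ while $x \in {}^\mathcal{S}\widetilde{\mathcal{W}}$, so the uniqueness in (\ref{eq:lunghezze}) identifies this as the canonical coset decomposition of $\lambda$. Hence $\ell(\lambda) = \ell(v^{-1}) + \ell(x) = \ell(v) + \ell(x)$, which rearranges to the desired identity.

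For (b), take $s \in \mathcal{S}$ and rewrite $s\lambda = sv^{-1}x = (vs)^{-1}x$. Since $(vs)^{-1} \in \mathcal{W}$ and $x \in {}^\mathcal{S}\widetilde{\mathcal{W}}$, the same length-additivity gives $\ell(s\lambda) = \ell(vs) + \ell(x)$. Subtracting the identity from (a), one gets $\ell(s\lambda) - \ell(\lambda) = \ell(vs) - \ell(v)$, so $s \in A_L(\lambda)$ if and only if $s \in A_R(v)$.

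For (c), Lemma \ref{lemma:costanteclassi}(b) already identifies $C_\mathcal{W}(\lambda) = \mathcal{W} \cap \lambda^{-1}\mathcal{W}\lambda$. Writing $\lambda^{-1} = x^{-1}v$ with $v \in \mathcal{W}$, conjugation by $\lambda$ and conjugation by $x$ agree on $\mathcal{W}$, so $\lambda^{-1}\mathcal{W}\lambda = x^{-1}\mathcal{W}x$ and (c) follows. I do not foresee any real obstacle: the whole argument is a careful bookkeeping of (\ref{eq:lunghezze}) applied in the presence of the semidirect-product splitting, and the only non-trivial ingredient, namely the centralizer description, is already available from the preceding lemma.
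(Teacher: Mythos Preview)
Your argument is correct and is essentially identical to the paper's own proof: both derive (a) and (b) from the length-additivity (\ref{eq:lunghezze}) applied to $\lambda = v^{-1}x$ and $s\lambda = (sv^{-1})x$, and both obtain (c) by combining $x^{-1}\mathcal{W}x = \lambda^{-1}v^{-1}\mathcal{W}v\lambda = \lambda^{-1}\mathcal{W}\lambda$ with Lemma~\ref{lemma:costanteclassi}(b). The only cosmetic difference is that the paper tracks $\ell(sv^{-1})$ where you track $\ell(vs)$, which are of course equal.
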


\begin{proof}
	Since $x\in {}^\mathcal{S}\widetilde{\mathcal{W}}$ we have
	$\ell(\lambda)=\ell(v^{-1}x)=\ell(x)+\ell(v^{-1})=\ell(x)+\ell(v)$ which 
	proves $(a)$.
	To prove $(b)$, let $s\in \mathcal{S}$. Then 
	$\ell(s\lambda)=\ell(sv^{-1}x)= 
	\ell(x)+\ell(sv^{-1})=\ell(\lambda)-\ell(v^{-1})+\ell(sv^{-1})$
	and so 
	$A_L(\lambda)\cap\mathcal{S}=A_L(v^{-1})\cap\mathcal{S}= 
	A_R(v)\cap\mathcal{S}$.
	Finally, we have $\mathcal{W}\cap x^{-1}\mathcal{W}x= 
	\mathcal{W}\cap \lambda^{-1} \mathcal{W}\lambda$ 
	which is $C_\mathcal{W}(\lambda)$ by Lemma \ref{lemma:costanteclassi}$(b)$.
\end{proof}

We recall that for every $J\subset \mathcal{S}$ we denote by $w_J$ the element 
of maximal length in $\mathcal{W}_J$ and we put $w_0=w_\mathcal{S}$. Then we 
have 
$\ell(w_Jw)=\ell(ww_J)=\ell(w_J)-\ell(w)$ and 
$\ell(w_Jww_J)=\ell(w)$ for every $w\in \mathcal{W}_J$.

\begin{lemma}\label{lemma:SWS}
	Let $x=v\lambda \in {}^\mathcal{S}\widetilde{\mathcal{W}}^\mathcal{S}$ with 
	$v\in\mathcal{W}$ and $\lambda\in\Lambda$ and let $Q=\mathcal{S}\cap 
	x^{-1}\mathcal{S}x$. Then
\begin{multicols}{4}
	\begin{enumerate}[(a)]
	\item $C_\mathcal{S}(\lambda)=Q$;
	\item $\mathcal{S}\subset A_R(\lambda)$;	
	\item $A_L(\lambda)\cap\mathcal{S}=Q$;
	\item $v=w_0w_Q$.
\end{enumerate}	
\end{multicols}
\end{lemma}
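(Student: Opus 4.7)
The plan is to establish the four assertions in the order (a), (b), (c), (d), using (a) to feed the stabilizer information into every subsequent step.

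Part (a) is immediate: combining Lemma \ref{lemma:WS}(c) with the identity $\mathcal{W}\cap x^{-1}\mathcal{W}x = \mathcal{W}_Q$ supplied by \eqref{eq:lemma2.25} (applied with $J=K=\mathcal{S}$) gives $C_\mathcal{W}(\lambda) = \mathcal{W}_Q$, and intersecting with $\mathcal{S}$ yields $C_\mathcal{S}(\lambda) = Q$.

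For part (b), I would write $\lambda = t(\mu)$ with $\mu \in L^{\vee}$ and compute $\ell(xs) - \ell(x)$ for $s = s_\beta$ ($\beta \in \Sigma$) using Lemma \ref{lemma:length}(4). Since $xs = (vs_\beta)\,t(s_\beta \mu)$, the formula expresses both lengths as sums over $\bm\Phi^+$, and after the change of variables $\gamma = s_\beta\alpha$ the difference collapses to a local term depending only on $\beta$, $\langle\beta,\mu\rangle$ and $\chi(v\beta)$. Splitting into the cases $v\beta > 0$ and $v\beta < 0$, the assumption $s \in A_R(x)$ forces $\langle\beta,\mu\rangle \le 0$ in each case; since this holds for every $\beta \in \Sigma$, the vector $\mu$ is antidominant. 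Redoing the same computation with $v = 1$ then yields $\mathcal{S} \subset A_R(\lambda)$. Part (c) follows at once: Lemma \ref{lemma:costanteclassi}(d) becomes applicable and, together with (a), gives $A_L(\lambda)\cap\mathcal{S} = C_\mathcal{S}(\lambda) = Q$.

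For part (d), Lemma \ref{lemma:WS}(b) combined with (c) gives $A_R(v)\cap\mathcal{S} = Q$, so $v \in \mathcal{W}^Q$. The strategy is to show that $v$ attains the maximal length $\ell(w_0) - \ell(w_Q)$ in $\mathcal{W}^Q$, which forces $v = w_0 w_Q$. For this I would first use (a) to identify $\bm\Phi_Q = \{\alpha\in\bm\Phi : \langle\alpha,\mu\rangle=0\}$: the stabilizer of $\mu$ in $\mathcal{W}$ equals $C_\mathcal{W}(\lambda) = \mathcal{W}_Q$, and for $\mu$ in the closure of the antidominant chamber this stabilizer is the standard parabolic whose roots are precisely those orthogonal to $\mu$. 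Combined with antidominance, this gives $\langle\alpha,\mu\rangle \le -1$ for every $\alpha \in \bm\Phi^+\setminus\bm\Phi_Q^+$. Because $x$ has minimal length in $\mathcal{W}\lambda$, minimizing $\ell(w\,t(\mu))$ over $w\in\mathcal{W}$ using Lemma \ref{lemma:length}(4) produces $\ell(x) = \ell(\lambda) - |\bm\Phi^+\setminus\bm\Phi_Q^+|$; Lemma \ref{lemma:WS}(a) then yields $\ell(v) = |\bm\Phi^+\setminus\bm\Phi_Q^+| = \ell(w_0 w_Q)$, closing the argument.

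The main difficulty will be the length-formula bookkeeping in parts (b) and (d): each step requires splitting the sum over $\bm\Phi^+$ according to the sign of $v\alpha$ (or $w\alpha$) and of $\langle\alpha,\mu\rangle + \chi(\cdot)$, and carefully unwinding the absolute values case by case. The identification of $\bm\Phi_Q$ with $\{\alpha\in\bm\Phi : \langle\alpha,\mu\rangle=0\}$ is the other delicate ingredient, though it is a standard consequence of (a).
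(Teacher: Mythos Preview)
Your proof is correct. Parts (a) and (c) match the paper's argument exactly. For parts (b) and (d) you take a genuinely different route, working with the explicit Iwahori--Matsumoto length formula (Lemma~\ref{lemma:length}(4)) and the geometry of the coroot lattice, whereas the paper stays entirely inside Coxeter combinatorics.

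For (b), the paper avoids all root-pairing computations: once $Q\subset A_R(\lambda)$ is known (from $Q=C_\mathcal{S}(\lambda)\subset A_L(\lambda)$), it takes $s\in\mathcal{S}\setminus Q$, writes $\lambda s = v^{-1}xs$, observes that $s\in{}^Q\mathcal{W}_\mathcal{S}$, and applies the double coset length decomposition of Lemma~\ref{lemma:decwtilde} to get $\ell(\lambda s)=\ell(v)+\ell(x)+1=\ell(\lambda)+1$ in one line. For (d), rather than computing $\ell(v)$, the paper shows that $\lambda w_Q = w_Q\lambda$ satisfies $\mathcal{S}\subset D_L(\lambda w_Q)$, so it is the longest element of the coset $\mathcal{W}\lambda=\mathcal{W}x$; since that longest element is also $w_0x$, one reads off $v=w_0w_Q$ directly.

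Your approach is more computational but yields extra geometric content along the way (antidominance of $\mu$ and the identification $\bm\Phi_Q=\{\alpha\in\bm\Phi:\langle\alpha,\mu\rangle=0\}$), which is exactly the information used later in~\eqref{eq:fQ}. The paper's approach is shorter, needs no case analysis on signs, and makes clearer that (b)--(d) are really consequences of the double coset machinery already set up in Section~\ref{sec:Coxeter}.
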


\begin{proof}\mbox{}
	\begin{enumerate}[$(a)$]
		\item Since $x\in {}^\mathcal{S}\widetilde{\mathcal{W}}^\mathcal{S}$, 
		by (\ref{eq:lemma2.25}) we have $\mathcal{W}\cap 
		x^{-1}\mathcal{W}x=\mathcal{W}_{Q}$ and then by Lemma 
		\ref{lemma:WS}$(c)$ we obtain  $C_\mathcal{S}(\lambda)=Q$.
		\item By $(a)$ and Lemma \ref{lemma:costanteclassi}$(c)$ we have 
		$Q=C_\mathcal{S}(\lambda)\subset A_L(\lambda)$ and so $Q\subset 
		A_R(\lambda)$. 
		Now, if $s\in \mathcal{S}\setminus Q$, then $\lambda s=v^{-1}xs$ and by 
		Lemma \ref{lemma:decwtilde} we have 
		$\ell(v^{-1}xs)=\ell(v^{-1})+\ell(x)+\ell(s)=\ell(\lambda)+1$.
		\item It follows by $(b)$ and Lemma \ref{lemma:costanteclassi}$(d)$.
		\item By $(b)$ and $(c)$ we have $\lambda\in 
		{}^Q\widetilde{\mathcal{W}}^\mathcal{S}$.
		We consider the element $\lambda w_Q$ and we prove that 
		$\mathcal{S}\subset D_L(\lambda w_Q)$. 
		If $s\in Q=C_\mathcal{S}(\lambda)$ then $\ell(s\lambda 
		w_Q)=\ell(\lambda s 
		w_Q)=\ell(\lambda)+\ell(sw_Q)<\ell(\lambda)+\ell(w_Q)=\ell(\lambda 
		w_Q)$.
		If $s\in\mathcal{S}\setminus Q$ then by $(c)$ we have 
		$\ell(s\lambda)<\ell(\lambda)$ and so by Lemma 
		\ref{lemma:compatibilita}$(a)$ we have $\mathcal{S}\subset 
		A_R(s\lambda)$
		which implies $\ell(s\lambda 
		w_Q)=\ell(s\lambda)+\ell(w_Q)<\ell(\lambda)+\ell(w_Q)=\ell(\lambda 
		w_Q)$.
		Hence $w_Q\lambda$, which is equal to $\lambda w_Q$ by 
		$(a)$, is the longest element in $ \mathcal{W}\lambda=\mathcal{W}x$ and 
		so it is equal to $w_0x$. This 
		implies $v=w_0w_Q$. \qedhere
	\end{enumerate}
\end{proof}

In summary, if $x\in{}^\mathcal{S}\widetilde{\mathcal{W}}^\mathcal{S}$ then 
$x=w_0w_Q\lambda $ where $\lambda\in\Lambda$, $Q=\mathcal{S}\cap 
x^{-1}\mathcal{S}x=C_\mathcal{S}(\lambda)=A_L(\lambda)\cap\mathcal{S}$ and 
$\mathcal{S}\subset A_R(\lambda)$. 

\begin{lemma}\label{lemma:viceversa}
	Let $\lambda\in\Lambda$ and let $Q=C_\mathcal{S}(\lambda)$. If 
	$\mathcal{S}\subset A_R(\lambda)$ then $w_0w_Q\lambda\in 
	{}^\mathcal{S}\widetilde{\mathcal{W}}^\mathcal{S}$.
\end{lemma}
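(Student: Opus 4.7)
The plan is to set $v = w_0 w_Q = w(\mathcal{S}, Q)$, write $\lambda = t(v')$ with $v' \in L^{\vee}$, and verify directly that $\mathcal{S} \subset A_L(y) \cap A_R(y)$ for $y = v\lambda$ via the length formula (item~4) of Lemma~\ref{lemma:length}. Three preliminary facts are crucial: (i) applied to $\lambda$ itself, that formula shows $\mathcal{S} \subset A_R(\lambda)$ is equivalent to $\langle \gamma, v' \rangle \leq 0$ for every $\gamma \in \Sigma$; (ii) since $t(v') s_{\gamma, 0} t(-v') = s_{\gamma, \langle \gamma, v'\rangle}$, the assumption $Q = C_\mathcal{S}(\lambda)$ forces $\hat{Q} = \{\gamma \in \Sigma : \langle \gamma, v' \rangle = 0\}$, and by linearity $\bm\Phi_Q^+ = \{\alpha \in \bm\Phi^+ : \langle \alpha, v' \rangle = 0\}$; (iii) by Remark~\ref{rmk:wHJ}, $D_R(v) = \mathcal{S} \setminus Q$, $D_L(v) = \mathcal{S} \setminus Q'$ with $Q' = v Q v^{-1}$, and $v$ restricts to a bijection $\bm\Phi_Q \to \bm\Phi_{Q'}$.

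For a fixed $\beta \in \Sigma$, the commutation $\lambda s_\beta = s_\beta t(s_\beta v')$ gives $y s_\beta = (v s_\beta)\, t(s_\beta v')$, and after applying the length formula to both $y$ and $y s_\beta$ and reindexing via $\alpha \mapsto s_\beta \alpha$, all summands cancel except the one indexed by $\alpha = \beta$, leaving a short expression in $\langle \beta, v'\rangle$ and $\chi(v\beta)$. Splitting on whether $s_\beta \in Q$ (so $v\beta \in \bm\Phi^+$ and $\langle \beta, v'\rangle = 0$ by (ii)) or $s_\beta \notin Q$ (so $v\beta \in \bm\Phi^-$ and strictly $\langle \beta, v'\rangle \leq -1$, using (i) together with (ii)), one checks that $\ell(y s_\beta) - \ell(y) = +1$ in both cases. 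The left ascent is analogous: $s_\beta y = (s_\beta v)\, t(v')$, and the only nonvanishing contribution to $\ell(s_\beta y) - \ell(y)$ comes from $\alpha = \pm v^{-1}\beta$. If $s_\beta \in Q'$, then $v^{-1}\beta \in \hat{Q}$ (via $v s_{v^{-1}\beta} v^{-1} = s_\beta$ and $v^{-1}\beta \in \bm\Phi^+$), so $\langle v^{-1}\beta, v'\rangle = 0$; if $s_\beta \notin Q'$, then $-v^{-1}\beta \in \bm\Phi^+ \setminus \bm\Phi_Q^+$, so $\langle -v^{-1}\beta, v'\rangle \leq -1$. Both subcases again yield $+1$.

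The main obstacle is converting the hypotheses into pointwise strict inequalities on the positive roots that actually appear in the length formula: the simple-root bound $\langle \gamma, v'\rangle \leq 0$ from (i) alone is too weak, and one must invoke (ii) to identify $\bm\Phi_Q^+$ as the precise zero locus of $\langle \cdot, v'\rangle$ on $\bm\Phi^+$, together with the $v$-equivariant bijection $\bm\Phi_Q \leftrightarrow \bm\Phi_{Q'}$ of (iii), in order to treat the right and left descent checks symmetrically.
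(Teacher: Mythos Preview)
Your argument is correct, but it proceeds along a genuinely different path from the paper's. You work entirely inside the explicit length formula of Lemma~\ref{lemma:length}(4): after translating the hypotheses into the inequalities $\langle\gamma,v'\rangle\le 0$ on $\Sigma$ with equality exactly on $\hat Q$, you compute $\ell(ys_\beta)-\ell(y)$ and $\ell(s_\beta y)-\ell(y)$ term by term, using that the reindexing $\alpha\mapsto s_\beta\alpha$ (resp.\ the comparison of $\chi(v\alpha)$ with $\chi(s_\beta v\alpha)$) kills all but a single summand. The paper instead argues structurally: it first invokes Lemma~\ref{lemma:costanteclassi}(d) to get $A_L(\lambda)\cap\mathcal{S}=Q$, then repeats the computation from the proof of Lemma~\ref{lemma:SWS}(d) to show that $\lambda w_Q=w_Q\lambda$ is the longest element of the coset $\mathcal{W}\lambda$, whence $w_0w_Q\lambda\in{}^{\mathcal S}\widetilde{\mathcal W}$; the right-ascent condition then follows from $\lambda\in\widetilde{\mathcal W}^{\mathcal S}$ via Lemma~\ref{lemma:compatibilita}(a). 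Your approach is more self-contained and bypasses the chain of auxiliary lemmas (\ref{lemma:costanteclassi}, \ref{lemma:WS}, \ref{lemma:SWS}), at the cost of a direct case analysis; the paper's approach is shorter on the page because it recycles machinery already in place, and it makes the role of the longest coset representative transparent.
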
 

\begin{proof}
	By Lemma \ref{lemma:costanteclassi} we have $A_L(\lambda)\cap 
	\mathcal{S}=Q$.
	Proceeding as in the proof of Lemma \ref{lemma:SWS}$(d)$, we can prove that 
	$\mathcal{S}\subset D_L(\lambda w_Q)$. 
	Hence $w_Q\lambda =\lambda w_Q$ is the longest element in 
	$ \mathcal{W}\lambda$ and so $w_0w_Q\lambda \in 
	{}^\mathcal{S}\widetilde{\mathcal{W}}$.
	Now, by Lemma \ref{lemma:WS}$(a)$ we have $\ell(w_0w_Q\lambda 
	)=\ell(\lambda)-\ell(w_Q w_0)$ and, since $\lambda\in 
	\widetilde{\mathcal{W}}^\mathcal{S}$, by Lemma 
	\ref{lemma:compatibilita}$(a)$ we have $w_0w_Q\lambda \in 
	\widetilde{\mathcal{W}}^\mathcal{S}$. 
\end{proof}

\begin{prop}\label{prop:pQSS}
	We have
	$\{x\in{}^\mathcal{S}\widetilde{\mathcal{W}}^\mathcal{S}\,|\, 
	\mathcal{S}\cap x^{-1}\mathcal{S}x=Q\}=\{w_0w_Q\lambda 
	\,|\,\lambda\in\Lambda\cap \widetilde{\mathcal{W}}^\mathcal{S}, 
	C_\mathcal{S}(\lambda)=Q\}.$
\end{prop}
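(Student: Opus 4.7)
The plan is to prove the two inclusions separately, observing that each one is essentially packaged by the lemmas just established. Throughout, recall that $\lambda\in\widetilde{\mathcal{W}}^\mathcal{S}$ is equivalent, by definition, to $\mathcal{S}\subset A_R(\lambda)$.

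For the inclusion $\subseteq$, I would take any $x\in{}^\mathcal{S}\widetilde{\mathcal{W}}^\mathcal{S}$ with $\mathcal{S}\cap x^{-1}\mathcal{S}x=Q$ and use the semidirect product decomposition $\widetilde{\mathcal{W}}=\mathcal{W}\ltimes\Lambda$ to write $x=v\lambda$ uniquely with $v\in\mathcal{W}$ and $\lambda\in\Lambda$. Then I would apply Lemma \ref{lemma:SWS} in its entirety: part $(d)$ forces $v=w_0w_Q$, part $(a)$ gives $C_\mathcal{S}(\lambda)=Q$, and part $(b)$ shows $\mathcal{S}\subset A_R(\lambda)$, i.e.\ $\lambda\in\widetilde{\mathcal{W}}^\mathcal{S}$. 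Hence $x$ lies in the right-hand set.

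For the inclusion $\supseteq$, I would start with $\lambda\in\Lambda\cap\widetilde{\mathcal{W}}^\mathcal{S}$ satisfying $C_\mathcal{S}(\lambda)=Q$ and set $x=w_0w_Q\lambda$. Lemma \ref{lemma:viceversa} is designed precisely for this situation and yields $x\in{}^\mathcal{S}\widetilde{\mathcal{W}}^\mathcal{S}$. To recover the condition on $Q$, I would then invoke Lemma \ref{lemma:SWS}$(a)$ applied to this $x$: its decomposition in $\mathcal{W}\ltimes\Lambda$ is $x=(w_0w_Q)\lambda$, so the lemma gives $\mathcal{S}\cap x^{-1}\mathcal{S}x=C_\mathcal{S}(\lambda)=Q$, as required.

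No real obstacle remains, since all the work was already done in Lemmas \ref{lemma:costanteclassi}--\ref{lemma:viceversa}; the proposition is essentially a bookkeeping statement combining them. The only minor point to be careful about is to ensure that the unique decomposition $x=v\lambda$ used in the forward direction matches the convention of Lemma \ref{lemma:SWS} (i.e.\ $v\in\mathcal{W}$ on the left, $\lambda\in\Lambda$ on the right), so that parts $(a)$--$(d)$ of that lemma can be applied directly without reshuffling the factors.
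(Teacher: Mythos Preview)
Your proposal is correct and follows exactly the same approach as the paper: the inclusion $\subseteq$ is handled by Lemma~\ref{lemma:SWS}, and the inclusion $\supseteq$ by Lemma~\ref{lemma:viceversa} together with Lemma~\ref{lemma:SWS}$(a)$. The paper's own proof is a one-line pointer to these lemmas, and your write-up simply unpacks that pointer.
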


\begin{proof}
	One inclusion is given by Lemma \ref{lemma:SWS} while the other inclusion 
	is given by Lemmas \ref{lemma:viceversa} and \ref{lemma:SWS}$(a)$.
\end{proof}

Now, as a consequence of Lemma \ref{lemma:SWS}, we show that the rationality of 
$p_{Q,J,K}(t)$ depends on the rationality of $p_{Q,\mathcal{S},\mathcal{S}}(t)$.

\begin{corol}\label{corol:razionale2}
	Let $Q,J\subset \mathcal{S}$. Then
	\begin{equation*}
	p_{Q,J,\mathcal{S}}(t)=\sum_{Q\subset Q'\subset \mathcal{S}} 
	p^{\mathcal{S}}_{w_0w_{Q'}Qw_{Q'}w_0,J,w_0Q'w_0}(t) 
	\;p_{Q',\mathcal{S},\mathcal{S}}(t).
	\end{equation*}
\end{corol}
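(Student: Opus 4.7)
The plan is to construct a length-preserving bijection between $p_{Q,J,\mathcal{S}}$ and the disjoint union, over $Q'$ with $Q\subset Q'\subset\mathcal{S}$, of $p^\mathcal{S}_{w_0w_{Q'}Qw_{Q'}w_0,J,w_0Q'w_0}\times p_{Q',\mathcal{S},\mathcal{S}}$; summing $t^{\ell(x)}$ will then yield the identity.

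First I would establish a dual of Lemma \ref{lemma:razionale1}: every $x\in{}^J\widetilde{\mathcal{W}}^\mathcal{S}$ factors uniquely as $x=yz$ with $z\in{}^\mathcal{S}\widetilde{\mathcal{W}}^\mathcal{S}$, $y\in{}^J\mathcal{W}^{Q_2}$ where $Q_2:=\mathcal{S}\cap z\mathcal{S}z^{-1}$, and $\ell(x)=\ell(y)+\ell(z)$. This can be proved by applying Lemma \ref{lemma:razionale1} to $x^{-1}\in{}^\mathcal{S}\widetilde{\mathcal{W}}^J$ with the enlargement $J\subset\mathcal{S}$ of its right index, and then inverting the resulting decomposition; the key point is that ${}^\mathcal{S}\widetilde{\mathcal{W}}^\mathcal{S}$ is closed under inversion, so the right factor is of the desired type.

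Next, I set $Q':=\mathcal{S}\cap z^{-1}\mathcal{S}z$, so that $z\in p_{Q',\mathcal{S},\mathcal{S}}$. By Lemma \ref{lemma:SWS} we have $z=w_0w_{Q'}\lambda$ with $\lambda\in\Lambda$ and $C_\mathcal{S}(\lambda)=Q'$; in particular $\lambda$ commutes with $\mathcal{W}_{Q'}$, while $w_{Q'}$ permutes $Q'$ via the opposition involution. A direct conjugation calculation then gives $Q_2=zQ'z^{-1}=w_0Q'w_0$. To translate the defining condition $\mathcal{S}\cap x^{-1}Jx=Q$, I would apply (\ref{eq:lemma2.25}) twice, first inside $\widetilde{\mathcal{W}}$ for the pair $(\mathcal{S},\mathcal{S})$ and then inside $\mathcal{W}$ for $(J,Q_2)$ (using $y\in{}^J\mathcal{W}^{Q_2}$), obtaining
\[\mathcal{W}_Q=\mathcal{W}\cap z^{-1}y^{-1}\mathcal{W}_Jyz=z^{-1}\bigl(\mathcal{W}_{Q_2}\cap y^{-1}\mathcal{W}_Jy\bigr)z=z^{-1}\mathcal{W}_{Q_2\cap y^{-1}Jy}z.\]
Since $Q_2\cap y^{-1}Jy\subset Q_2=zQ'z^{-1}$, this forces $Q\subset Q'$ and, using $\lambda Q\lambda^{-1}=Q$, it yields $Q_2\cap y^{-1}Jy=zQz^{-1}=w_0w_{Q'}Qw_{Q'}w_0$; equivalently, $y\in p^\mathcal{S}_{w_0w_{Q'}Qw_{Q'}w_0,J,w_0Q'w_0}$.

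For the converse, given $z\in p_{Q',\mathcal{S},\mathcal{S}}$ and $y\in p^\mathcal{S}_{w_0w_{Q'}Qw_{Q'}w_0,J,w_0Q'w_0}$ with $Q\subset Q'\subset\mathcal{S}$, I apply Lemma \ref{lemma:compatibilita} to the inverse factorisation $(yz)^{-1}=z^{-1}y^{-1}$ to conclude that $yz\in{}^J\widetilde{\mathcal{W}}^\mathcal{S}$ with $\ell(yz)=\ell(y)+\ell(z)$, and then run the computation above backwards to recover $\mathcal{S}\cap(yz)^{-1}J(yz)=Q$. The main technical hurdle is the first step: setting up the dual decomposition cleanly via inversion and verifying that the $y$-factor really lands in a \emph{finite} parabolic double coset ${}^J\mathcal{W}^{Q_2}$, which is what makes the first factor in each summand of the Corollary a polynomial. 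Once this is in place, the remainder is a careful bookkeeping of conjugations in $\mathcal{W}$, mediated by the explicit form of $z$ supplied by Lemma \ref{lemma:SWS}.
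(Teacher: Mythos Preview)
Your proposal is correct and follows essentially the same route as the paper's proof: decompose $x=yz$ with $z\in{}^\mathcal{S}\widetilde{\mathcal{W}}^\mathcal{S}$ and $y\in{}^J\mathcal{W}^{Q_2}$, use Lemma~\ref{lemma:SWS} to identify $Q_2=w_0Q'w_0$, then track the condition $\mathcal{S}\cap x^{-1}Jx=Q$ through (\ref{eq:lemma2.25}) to obtain $Q\subset Q'$ and $Q_2\cap y^{-1}Jy=w_0w_{Q'}Qw_{Q'}w_0$. The only cosmetic difference is that you spell out the inversion argument needed to get the ``left'' variant of Lemma~\ref{lemma:razionale1} (small factor in $\mathcal{W}$ on the left), whereas the paper simply cites Lemma~\ref{lemma:razionale1} and leaves that step implicit; likewise, for the converse the paper invokes Lemma~\ref{lemma:compatibilita}(b) directly rather than passing through the inverse factorisation, but the content is the same.
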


\begin{proof}
	We want to prove the equality
	\[\{x\in{}^J\widetilde{\mathcal{W}}^\mathcal{S}\,|\,\mathcal{S}\cap 
	x^{-1}Jx=Q\}=
	\bigsqcup_{Q\subset Q'\subset \mathcal{S}}
	\bigsqcup_{\substack{z\in 
			{}^{\mathcal{S}}\widetilde{\mathcal{W}}^{\mathcal{S}}\\ 
			\mathcal{S}\cap z^{-1}\mathcal{S}z=Q'}}
	\bigsqcup_{\substack{y\in{}^J\mathcal{W}^{w_0Q'w_0}\\ w_0Q'w_0\cap 
	y^{-1}Jy=w_0w_{Q'}Qw_{Q'}w_0}}
	\{yz\}.
	\]
	Let $x\in p_{Q,J,\mathcal{S}}$. By Lemma \ref{lemma:razionale1} we have 
	$x=yz$ with $z\in {}^{\mathcal{S}}\widetilde{\mathcal{W}}^{\mathcal{S}}$ 
	and $y\in {}^J\mathcal{W}^{\mathcal{S}\cap z\mathcal{S}z^{-1}}$.
	If we put $Q':=\mathcal{S}\cap z^{-1}\mathcal{S}z$, then, by Lemma 
	\ref{lemma:SWS} we have $z=w_0w_{Q'}\lambda$ with 
	$\lambda\in\Lambda$ and $C_\mathcal{S}(\lambda)=Q'$. 
	Hence, $\mathcal{S}\cap z\mathcal{S}z^{-1}=zQ'z^{-1}=w_0w_{Q'}\lambda 
	Q'\lambda^{-1}w_{Q'}w_0=w_0 Q' w_0$ and so $y\in\mathcal{W}^{w_0Q'w_0}$.
	Moreover, since $x\in {}^J\widetilde{\mathcal{W}}^\mathcal{S}$ and $z\in 
	{}^{\mathcal{S}}\widetilde{\mathcal{W}}^{\mathcal{S}}$ we have 
	$\mathcal{W}_{Q}=\mathcal{W}\cap x^{-1}\mathcal{W}_Jx\subset 
	\mathcal{W}\cap z^{-1}y^{-1}\mathcal{W}yz=\mathcal{W}\cap 
	z^{-1}\mathcal{W}z=\mathcal{W}_{Q'}$ and so $Q\subset Q'$. Finally, we have 
	$w_0Q'w_0\cap y^{-1}Jy=zQ'z^{-1}\cap y^{-1}Jy=z(Q'\cap 
	x^{-1}Jx)z^{-1}=zQz^{-1}$ which is equal to $w_0w_{Q'}Qw_{Q'}w_0$ since 
	$Q\subset 
	Q'=C_\mathcal{S}(\lambda)$.	
	On the other hand, fix $Q'\subset\mathcal{S}$ containing $Q$ and let 
	$H=w_0w_{Q'}Qw_{Q'}w_0$. Let 
	$z\in {}^{\mathcal{S}}\widetilde{\mathcal{W}}^{\mathcal{S}}$ such that 
	$\mathcal{S}\cap z^{-1}\mathcal{S}z=Q'$ and 
	$y\in{}^J\mathcal{W}^{w_0Q'w_0}$ such that $w_0Q'w_0\cap 
	y^{-1}Jy=H$. By Lemma \ref{lemma:compatibilita}$(b)$, the element $x:=yz$ 
	belongs to $ {}^J\widetilde{\mathcal{W}}^\mathcal{S}$.
	As before, we have $z=w_0w_{Q'}\lambda$ with 
	$\lambda\in\Lambda$ and $C_\mathcal{S}(\lambda)=Q'$. Moreover, 
	$\mathcal{W}_{\mathcal{S}\cap x^{-1}Jx}=\mathcal{W}\cap 
	x^{-1}\mathcal{W}_Jx\subset \mathcal{W}\cap 
	z^{-1}y^{-1}\mathcal{W}yz=\mathcal{W}\cap 
	z^{-1}\mathcal{W}z=\mathcal{W}_{Q'}$ and so 
	$\mathcal{S}\cap x^{-1}Jx=Q'\cap x^{-1}Jx=z^{-1}(w_0Q'w_0\cap 
	y^{-1}Jy)z=z^{-1}Hz=\lambda^{-1}Q\lambda$ which is equal to $Q$ since 
	$Q\subset Q'=C_\mathcal{S}(\lambda)$.
\end{proof}

\begin{corol}\label{corol:pQJK}
	Let $Q,J,K\subset\mathcal{S}$. Then
	\begin{equation*}
	p_{Q,J,K}(t)=\sum_{Q'\subset \mathcal{S}}
	\sum_{Q'\subset Q''\subset \mathcal{S}}
	p^{\mathcal{S}}_{Q,Q',K}(t) \;
	p^{\mathcal{S}}_{w_0w_{Q''}Q'w_{Q''}w_0,J,w_0Q''w_0}(t) \;
	p_{Q'',\mathcal{S},\mathcal{S}}(t).
\end{equation*}
\end{corol}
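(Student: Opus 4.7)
The plan is to chain together the two previously established decomposition identities. First I would apply Proposition \ref{prop:razionale2} to $p_{Q,J,K}(t)=p^{\widetilde{\mathcal{S}}}_{Q,J,K}(t)$ taking the intermediate parabolic $K'=\mathcal{S}$; this is legitimate because the hypothesis $K\subset\mathcal{S}$ is given and $\mathcal{S}\subset\widetilde{\mathcal{S}}$. This yields
\[
p_{Q,J,K}(t)=\sum_{Q'\subset \mathcal{S}} p^{\mathcal{S}}_{Q,Q',K}(t)\;p_{Q',J,\mathcal{S}}(t),
\]
so the problem of expressing $p_{Q,J,K}(t)$ is reduced to expanding each $p_{Q',J,\mathcal{S}}(t)$.

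Second, for each $Q'\subset \mathcal{S}$ appearing in the sum I would apply Corollary \ref{corol:razionale2} to $p_{Q',J,\mathcal{S}}(t)$, which gives
\[
p_{Q',J,\mathcal{S}}(t)=\sum_{Q'\subset Q''\subset \mathcal{S}} p^{\mathcal{S}}_{w_0w_{Q''}Q'w_{Q''}w_0,\,J,\,w_0Q''w_0}(t)\;p_{Q'',\mathcal{S},\mathcal{S}}(t).
\]
Substituting this expression into the previous formula and swapping the nested summations gives exactly the stated identity.

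There is essentially no obstacle here: both ingredients have been proved in the preceding paragraphs, and the only thing to check is compatibility of hypotheses, namely that $K\subset\mathcal{S}$ (needed to apply Proposition \ref{prop:razionale2} with $K'=\mathcal{S}$) and that $Q'\subset\mathcal{S}$ whenever $p^{\mathcal{S}}_{Q,Q',K}(t)$ is nonzero (needed to apply Corollary \ref{corol:razionale2}). Both are immediate from the definitions, so the proof reduces to this two-step substitution.
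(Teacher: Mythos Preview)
Your proposal is correct and matches the paper's own proof, which simply states that the corollary follows from Proposition~\ref{prop:razionale2} and Corollary~\ref{corol:razionale2}; you have spelled out precisely this two-step substitution. The only minor addition worth noting is that Corollary~\ref{corol:razionale2} also requires $J\subset\mathcal{S}$, which is part of the hypothesis here.
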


\begin{proof}
	Il follows by Proposition \ref{prop:razionale2} and Corollary 
	\ref{corol:razionale2}.
\end{proof}

Hence, if we prove that $p_{Q,\mathcal{S},\mathcal{S}}(t)$ is a rational 
function for every 
$Q\subset\mathcal{S}$ 
then $p_{Q,J,K}(t)$ is rational for every $Q,J,K\subset\mathcal{S}$. 
By Proposition \ref{prop:pQSS} and Lemma \ref{lemma:WS}$(a)$ we obtain
\begin{equation}\label{eq:pQSS}
p_{Q,\mathcal{S},\mathcal{S}}(t)=t^{\ell(w_Q)-\ell(w_0)}
\sum_{\substack{\lambda\in\Lambda\cap \widetilde{\mathcal{W}}^\mathcal{S}\\ 
		C_\mathcal{S}(\lambda)=Q}}t^{\ell(\lambda)}
	=t^{\ell(w_Q)-\ell(w_0)}
	\sum_{\substack{\lambda\in\Lambda\cap 
	{}^\mathcal{S}\widetilde{\mathcal{W}}\\ 
			C_\mathcal{S}(\lambda)=Q}}t^{\ell(\lambda)}.
\end{equation}
Now, by 4 of Lemma \ref{lemma:length} we have $\lambda=t(v)\in\Lambda\cap 
{}^\mathcal{S}\widetilde{\mathcal{W}}$ if and only if 
$\langle\alpha,v\rangle\geq 0$ for every $\alpha\in\Sigma$. Moreover we have 
$s_{\alpha,0}\in 
C_\mathcal{S}(t(v))$ if and only if $\langle\alpha,v\rangle=0$. Hence, by 4 of
Lemma \ref{lemma:length} we obtain
\begin{equation}\label{eq:fQ}
	\sum_{\substack{\lambda\in\Lambda\cap 
	\widetilde{{}^\mathcal{S}\mathcal{W}}\\ 
	C_\mathcal{S}(\lambda)=Q}}t^{\ell(\lambda)}=
	\sum_{\substack{v\in L^{\vee}\\ 
	\langle\alpha,v\rangle> 0 \;\forall s_{\alpha,0}\in\mathcal{S}\setminus Q\\	
	\langle\alpha,v\rangle=0 \;\forall s_{\alpha,0}\in Q}}
	t^{\langle2\rho,v\rangle}
\end{equation} 	
where $2\rho=\sum_{\alpha\in\bm\Phi^+}\alpha$. 

\medskip
In the last part of this paragraph, using results of section 
\ref{section:coni}, we prove that, for every $Q\subset\mathcal{S}$, the series 
in (\ref{eq:fQ}), that we denote 
$f_Q(t)$, is a rational function.

\medskip
Let $n=|\Sigma|$. We fix an ordering 
$\Sigma=\{\alpha_1,\dots,\alpha_n\}$ of simple roots.
Then we can identify $V$ and $V^\vee$ with $\R^n$.
We recall that $L^\vee$ is the $\Z$-span of $\Sigma^\vee$ and so, if we denote 
$\bm\alpha^\vee=(\alpha_1^\vee,\dots,\alpha_n^\vee)$, every element of $L^\vee$ 
is of the form $\bm\alpha^\vee\mathbf{m}=\sum_{i=1}^n m_i\alpha_i^\vee$ 
where $\mathbf{m}=(m_1,\dots,m_n)^T\in\Z^n$.

\medskip
Let $\mathbf{C}=(\langle\alpha_i,\alpha_j^\vee\rangle)_{1\leq i,j\leq n}\in 
GL_n(\Q)$ be the Cartan matrix associated to $\bm\Phi$ and let
$\{\mathbf{e}_1,\dots,\mathbf{e}_n\}$ be the standard basis of $\R^n$. 
Since $\mathbf{C}$ is an invertible matrix with integer 
coefficients, the vector 
$\mathbf{v}_i:=|\det(\mathbf{C})|\mathbf{C}^{-1}\mathbf{e}_i$ 
belongs to $\Z^n$ for every $i\in\{1,\dots,n\}$. 
We remark that for every $v=\bm\alpha^\vee\mathbf{m}\in L^\vee$ we have 
$(\langle\alpha_1,v\rangle,\dots,\langle\alpha_n,v\rangle)^T= 
\mathbf{C}\mathbf{m}$. 
	
\begin{lemma}\label{lemma:fQ}
	Let $Q\subset\mathcal{S}$ and 
	$\mathcal{I}(Q)= 
	\{i\in\{1,\dots,n\}\,|\,s_{\alpha_i,0}\in \mathcal{S}\setminus Q\}$.
	We have
	\[\left\{v\in L^\vee\,\Big|
	\begin{array}{l}
	\langle\alpha,v\rangle> 0\; \forall s_{\alpha,0}\in \mathcal{S}\setminus 
	Q \\
	\langle\alpha,v\rangle= 0\; \forall 
	s_{\alpha,0}\in  Q
	\end{array}
	\hspace{-0,15cm}
	\right\}= 
	\bigg\{\sum_{i\in\mathcal{I}(Q)}\lambda_i\bm\alpha^\vee\mathbf{v}_i\, 
	\Big|\,
	\sum_{i\in\mathcal{I}(Q)}\lambda_i\mathbf{v}_i\in \Z^n ,
	\lambda_i\in\R_{>0}\,\forall i\in\mathcal{I}(Q)
	\bigg\}.\]
\end{lemma}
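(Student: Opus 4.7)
The plan is a direct change of coordinates driven by the Cartan matrix $\mathbf{C}$. First, I would rewrite an arbitrary $v \in L^\vee$ as $v = \bm\alpha^\vee\mathbf{m}$ with $\mathbf{m}\in\Z^n$; since the excerpt already notes that $(\langle\alpha_1,v\rangle,\dots,\langle\alpha_n,v\rangle)^T = \mathbf{C}\mathbf{m}$, the left-hand side can be rephrased as the set of $\bm\alpha^\vee\mathbf{m}$ with $\mathbf{m}\in\Z^n$ such that $(\mathbf{C}\mathbf{m})_i > 0$ for $i\in\mathcal{I}(Q)$ and $(\mathbf{C}\mathbf{m})_i = 0$ for $i\notin\mathcal{I}(Q)$.

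Next I would invert $\mathbf{C}$. Writing $\mathbf{m} = \mathbf{C}^{-1}\mathbf{C}\mathbf{m} = \sum_i (\mathbf{C}\mathbf{m})_i\, \mathbf{C}^{-1}\mathbf{e}_i$ and recalling $\mathbf{v}_i = |\det(\mathbf{C})|\,\mathbf{C}^{-1}\mathbf{e}_i$, I set $\lambda_i := (\mathbf{C}\mathbf{m})_i/|\det(\mathbf{C})|$, so that $\mathbf{m} = \sum_i \lambda_i \mathbf{v}_i$. The sign conditions on $\mathbf{C}\mathbf{m}$ transfer to $\lambda_i > 0$ for $i\in\mathcal{I}(Q)$ and $\lambda_i = 0$ for $i\notin\mathcal{I}(Q)$, so the sum collapses to one indexed by $\mathcal{I}(Q)$. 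The integrality requirement $\mathbf{m}\in\Z^n$ then becomes exactly $\sum_{i\in\mathcal{I}(Q)}\lambda_i\mathbf{v}_i \in \Z^n$, producing the right-hand side.

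For the reverse inclusion I would take $v = \sum_{i\in\mathcal{I}(Q)}\lambda_i\,\bm\alpha^\vee\mathbf{v}_i$ with $\lambda_i > 0$ and $\mathbf{m} := \sum_i \lambda_i\mathbf{v}_i \in \Z^n$, whence $v = \bm\alpha^\vee\mathbf{m}\in L^\vee$; then $\mathbf{C}\mathbf{m} = \sum_i \lambda_i\,|\det(\mathbf{C})|\,\mathbf{e}_i$, from which the required strict positivity of $\langle\alpha_i,v\rangle$ for $i\in\mathcal{I}(Q)$ and its vanishing for $i\notin\mathcal{I}(Q)$ are immediate.

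The argument is essentially bookkeeping, so there is no real obstacle; the only point worth watching is that each $\mathbf{v}_i$ lies in $\Z^n$ (this is exactly why the factor $|\det(\mathbf{C})|$ was inserted in the definition), so the condition $\sum_{i\in\mathcal{I}(Q)}\lambda_i\mathbf{v}_i\in\Z^n$ on the right-hand side is meaningful even though the $\lambda_i$ are a priori real. The payoff of this lemma is that it identifies the index set of the sum defining $f_Q(t)$ in \eqref{eq:fQ} with the lattice points of the open rational simplicial cone generated by the vectors $\bm\alpha^\vee\mathbf{v}_i$, $i\in\mathcal{I}(Q)$, so that Proposition \ref{prop:cono} can be invoked to conclude rationality of $f_Q(t)$.
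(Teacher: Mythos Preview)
Your proposal is correct and follows essentially the same approach as the paper: both argue by writing $v=\bm\alpha^\vee\mathbf{m}$ with $\mathbf{m}\in\Z^n$, using $\mathbf{C}\mathbf{m}=(\langle\alpha_i,v\rangle)_i$ to translate the sign/vanishing constraints, and then inverting via $\mathbf{v}_i=|\det(\mathbf{C})|\,\mathbf{C}^{-1}\mathbf{e}_i$ to obtain $\mathbf{m}=\sum_{i\in\mathcal{I}(Q)}\lambda_i\mathbf{v}_i$ with $\lambda_i=\langle\alpha_i,v\rangle/|\det(\mathbf{C})|$, the reverse inclusion being handled symmetrically. The only cosmetic difference is that the paper states the forward and backward inclusions separately rather than phrasing them as a single change of coordinates.
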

 	
\begin{proof}
Let $v\in L^\vee$ such that $\langle\alpha,v\rangle> 0$ for every 
$s_{\alpha,0}\in \mathcal{S}\setminus Q$ and $\langle\alpha,v\rangle= 0$ for 
every $s_{\alpha,0}\in  Q$.
Then we have $v=\bm\alpha^\vee\mathbf{m}$ with 
$\mathbf{m}\in\Z^n$ and 
$\mathbf{C}\mathbf{m}= 
\sum_{i\in\mathcal{I}(Q)}\langle\alpha_i,v\rangle\mathbf{e}_i$.
We obtain 
\[\mathbf{m}= 
\sum_{i\in\mathcal{I}(Q)}\langle\alpha_i,v\rangle\mathbf{C}^{-1} 
\mathbf{e}_i=
\sum_{i\in\mathcal{I}(Q)}\frac{\langle\alpha_i,v\rangle}{|\det(\mathbf{C})|} 
\mathbf{v}_i.\]
On the other hand, let 
$\mathbf{m}=\sum_{i\in\mathcal{I}(Q)}\lambda_i\mathbf{v}_i\in\Z^n$ with 
$\lambda_i>0$ for every $i\in\mathcal{I}(Q)$.
Then $\bm\alpha^\vee\mathbf{m}\in L^{\vee}$ and 
\[\mathbf{C}\mathbf{m}=
\sum_{i\in\mathcal{I}(Q)}\lambda_i\mathbf{C}\mathbf{v}_i=
\sum_{i\in\mathcal{I}(Q)}\lambda_i|\det(\mathbf{C})|\mathbf{e}_i\]
and so 
$\langle\alpha_i,\bm\alpha^\vee\mathbf{m}\rangle= 
\lambda_i|\det(\mathbf{C})|>0$
for every $i\in\mathcal{I}(Q)$ and 
$\langle\alpha_i,\bm\alpha^\vee\mathbf{m}\rangle=0$
otherwise.
\end{proof}
 	
\begin{rmk}\label{rmk:rCm}
Let $\mathbf{r}=(r_1,\dots,r_n)\in\N^n$ such that $2\rho=\sum_{i=1}^n 
r_i\alpha_i$.
Since for every $i\in\{1,\dots,n\}$ we have 
$s_{\alpha_i}(\rho)=\rho-\alpha_i$, we 
obtain $\mathbf{r}\mathbf{C}= 
(\langle2\rho,\alpha_1^\vee\rangle,\dots,\langle2\rho,\alpha_n^\vee\rangle) 
=(2,\dots,2)=\mathbf{2}$.
Then, for every $v=\bm\alpha^\vee\mathbf{m}\in L^\vee$ we have 
$\langle 2\rho,v\rangle=\mathbf{r}\mathbf{C}\mathbf{m}=\mathbf{2}\mathbf{m}$.
\end{rmk}

For every $Q\subset \mathcal{S}$, let 
\begin{equation}\label{eq:conoQ}
\mathcal{C}(Q)=\bigg\{\sum_{i\in\mathcal{I}(Q)}\lambda_i\mathbf{v}_i 
\,\Big|\, \lambda_i\in\R_{\geq 0} \,\forall\, i\in\mathcal{I}(Q)\bigg\}
\end{equation}
be the rational simplicial $|\mathcal{I}(Q)|$-cone in $\R^n$ associated to $Q$. 
 	
\begin{lemma}\label{lemma:fQ2}
	Let $Q\subset\mathcal{S}$, $\mathcal{C}=\mathcal{C}(Q)$ and 
	$\mathbf{t}=(t,\dots,t)$. Then 
	$f_Q(t)=\sigma_{\mathcal{C}^o}(\mathbf{t}^{\mathbf{2}})$.
\end{lemma}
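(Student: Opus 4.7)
The plan is to unravel the definitions of $f_Q(t)$ and $\sigma_{\mathcal{C}^o}(\mathbf{t}^{\mathbf{2}})$ and show they describe the same sum, using the bijection $L^{\vee}\leftrightarrow\Z^n$ given by $v=\bm\alpha^{\vee}\mathbf{m}\leftrightarrow\mathbf{m}$, together with the two computational tools provided just before the statement: Lemma \ref{lemma:fQ} (which identifies the $\mathbf{m}$'s that appear) and Remark \ref{rmk:rCm} (which computes the exponent).

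More concretely, I would first rewrite $f_Q(t)$ using Lemma \ref{lemma:fQ}: the summation set there is exactly the image under $\bm\alpha^{\vee}(-)$ of those $\mathbf{m}\in\Z^n$ of the form $\sum_{i\in\mathcal{I}(Q)}\lambda_i\mathbf{v}_i$ with every $\lambda_i>0$. By definition (\ref{eq:conoQ}), these are precisely the integer points of the open cone $\mathcal{C}(Q)^o$. Since the map $\mathbf{m}\mapsto\bm\alpha^{\vee}\mathbf{m}$ is a bijection from $\Z^n$ to $L^{\vee}$, this gives a bijection between the index set of $f_Q(t)$ and $\mathcal{C}^o\cap\Z^n$.

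Next I would translate the exponent: by Remark \ref{rmk:rCm}, for $v=\bm\alpha^{\vee}\mathbf{m}$ one has $\langle 2\rho,v\rangle=\mathbf{2}\mathbf{m}=2(m_1+\cdots+m_n)$. On the generating function side, with $\mathbf{t}=(t,\dots,t)$ and $\mathbf{2}=(2,\dots,2)$, the substitution yields $(\mathbf{t}^{\mathbf{2}})^{\mathbf{m}}=t^{2m_1}\cdots t^{2m_n}=t^{\mathbf{2}\mathbf{m}}$, so the two exponents match term by term.

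Putting the pieces together,
\[
f_Q(t)=\sum_{\mathbf{m}\in\mathcal{C}^o\cap\Z^n}t^{\mathbf{2}\mathbf{m}}
=\sum_{\mathbf{m}\in\mathcal{C}^o\cap\Z^n}(\mathbf{t}^{\mathbf{2}})^{\mathbf{m}}
=\sigma_{\mathcal{C}^o}(\mathbf{t}^{\mathbf{2}}).
\]
The proof is essentially a chain of definitional identifications, and I do not expect any genuine obstacle: the substantive content was already packaged into Lemma \ref{lemma:fQ} and Remark \ref{rmk:rCm}, and Lemma \ref{lemma:fQ2} is the clean reformulation that lets Proposition \ref{prop:cono} be applied in the sequel to deduce rationality of $f_Q(t)$.
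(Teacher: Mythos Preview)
Your proposal is correct and follows essentially the same approach as the paper: both use Lemma~\ref{lemma:fQ} to identify the index set of $f_Q(t)$ with $\mathcal{C}^o\cap\Z^n$ via $v=\bm\alpha^\vee\mathbf{m}$, and Remark~\ref{rmk:rCm} to rewrite the exponent $\langle 2\rho,v\rangle$ as $\mathbf{2}\mathbf{m}$, arriving at the same chain of equalities. The paper's proof is simply a terser one-line version of what you wrote.
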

 	
\begin{proof}
 Using Lemma \ref{lemma:fQ} and Remark \ref{rmk:rCm}, we obtain 
 \[\sum_{\substack{v\in L^{\vee}\\ 
	\langle\alpha,v\rangle> 0 \;\forall s_\alpha\in\mathcal{S}\setminus Q\\	
	\langle\alpha,v\rangle=0 \;\forall s_\alpha\in Q}}
	t^{\langle2\rho,v\rangle}=
	\sum_{\mathbf{m}\in\mathcal{C}^o\cap\Z^n} 
	t^{\langle2\rho,\bm\alpha^\vee\mathbf{m}\rangle}=
	\sum_{\mathbf{m}\in\mathcal{C}^o\cap\Z^n}t^{\mathbf{2}\mathbf{m}}
	=	\sum_{\mathbf{m}\in\mathcal{C}^o\cap\Z^n}
	(\mathbf{t}^{\mathbf{2}})^\mathbf{m}=
	\sigma_{\mathcal{C}^o}(\mathbf{t}^{\mathbf{2}}).\qedhere
 \]
\end{proof}
 	
\begin{teor}\label{teor:riassunto}
	Let $\bm\Phi$ be an irreducible reduced finite root system in a finite 
	dimensional real vector space, let $\widetilde{\mathcal{W}}$ be the 
	associated affine Weyl group and let $\mathcal{S}$ as in 
	paragraph \ref{par:affineWeyl}. 
	\begin{enumerate}[(a)]
		\item $p_{Q,J,K}(t)$ defined in (\ref{eq:defpQJK}) is a rational 
		function for every $Q,J,K\subset \mathcal{S}$.
		\item ${}^J \widetilde{\mathcal{W}}^K(t)$ is a rational function  
		for every $J,K\subset \mathcal{S}$.
		\item $N_{\widetilde{\mathcal{W}}}(\mathcal{W}_J)(t)$ is a 
		rational function for 
		every $J\subset \mathcal{S}$.
	\end{enumerate}
\end{teor}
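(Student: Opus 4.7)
The plan is to reduce everything to proving that $p_{Q,\mathcal{S},\mathcal{S}}(t)$ is rational for every $Q \subset \mathcal{S}$; by formula (\ref{eq:pQSS}) this is in turn reduced to showing that $f_Q(t)$ is rational. The heavy lifting for this, the content of Lemmas \ref{lemma:fQ}, \ref{lemma:fQ2} and Proposition \ref{prop:cono}, has already been carried out in the excerpt, so the theorem will follow by assembling these pieces.

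First, I would establish part $(a)$ in two stages. By Lemma \ref{lemma:fQ2}, $f_Q(t) = \sigma_{\mathcal{C}(Q)^o}(\mathbf{t}^{\mathbf{2}})$ with $\mathbf{t} = (t, \dots, t)$, and by Proposition \ref{prop:cono}$(b)$, $\sigma_{\mathcal{C}(Q)^o}(\mathbf{z})$ is a rational function of $z_1, \dots, z_n$; specializing each $z_i$ to $t^2$ therefore gives $f_Q(t) \in \Q(t)$. Combined with (\ref{eq:pQSS}), this shows that $p_{Q,\mathcal{S},\mathcal{S}}(t) \in \Q(t)$ for every $Q \subset \mathcal{S}$. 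Since $\widetilde{\mathcal{W}}_\mathcal{S} = \mathcal{W}$ is finite, each series $p^{\mathcal{S}}_{\cdot,\cdot,\cdot}(t)$ appearing in Corollary \ref{corol:pQJK} is a polynomial in $t$; hence that corollary expresses $p_{Q,J,K}(t)$ as a finite sum of products of polynomials and rational functions, proving $(a)$.

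Assertion $(b)$ then follows immediately from $(a)$ and the partition identity (\ref{eq:JWK}) applied with $S' = \widetilde{\mathcal{S}}$, which reads ${}^J\widetilde{\mathcal{W}}^K(t) = \sum_{Q \subset K} p_{Q,J,K}(t)$. Assertion $(c)$ follows from the decomposition $N_{\widetilde{\mathcal{W}}}(\mathcal{W}_J) = \mathcal{W}_J \rtimes p_{J,J,J}$ recalled in paragraph \ref{sec:normalizer} together with the length-additivity $N_{\widetilde{\mathcal{W}}}(\mathcal{W}_J)(t) = \mathcal{W}_J(t)\, p_{J,J,J}(t)$: since $J \subset \mathcal{S}$ is spherical, $\mathcal{W}_J(t)$ is a polynomial, and $p_{J,J,J}(t)$ is rational by $(a)$.

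The main conceptual hurdle is not in deducing $(a)$--$(c)$ but has already been cleared in the preparatory lemmas: the identification of $\{\lambda \in \Lambda \cap {}^\mathcal{S}\widetilde{\mathcal{W}} : C_\mathcal{S}(\lambda) = Q\}$ with the set of lattice points in the interior of a rational simplicial cone $\mathcal{C}(Q)$, on which the length function $\ell(t(v)) = \langle 2\rho, v \rangle$ is a linear functional, is precisely what makes the cone-generating-function machinery applicable. Once Lemma \ref{lemma:fQ} and Remark \ref{rmk:rCm} provide this translation via the Cartan matrix, Proposition \ref{prop:cono}$(b)$ supplies the rationality, and what remains is bookkeeping through Corollary \ref{corol:pQJK} and the partition identities.
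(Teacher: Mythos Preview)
Your proposal is correct and follows essentially the same route as the paper: you assemble Lemma \ref{lemma:fQ2}, Proposition \ref{prop:cono}, formula (\ref{eq:pQSS}) and Corollary \ref{corol:pQJK} to obtain $(a)$, then deduce $(b)$ from (\ref{eq:JWK}) and $(c)$ from the identity $N_{\widetilde{\mathcal{W}}}(\mathcal{W}_J)(t)=\mathcal{W}_J(t)\,p_{J,J,J}(t)$ of paragraph \ref{sec:normalizer}, exactly as the paper does. Your write-up simply spells out in more detail what the paper compresses into a single sentence.
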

 	
\begin{proof}
Point $(a)$ comes from Corollary \ref{corol:pQJK}, formula (\ref{eq:pQSS}), 
Lemma \ref{lemma:fQ2} and Proposition \ref{prop:cono}. 
Consequently, $(b)$ follows by (\ref{eq:JWK}) while $(c)$ by paragraph 
\ref{sec:normalizer}.
\end{proof}

\section{Examples}\label{sec:examples}
We resume some results of previous sections in order to calculate 
explicitly $p_{Q,J,K}(t)$ and so ${}^J 
\widetilde{\mathcal{W}}^K(t)$ for every $Q,J,K\subset\mathcal{S}$.

\medskip
Let $\Sigma=\{\alpha_1,\dots,\alpha_n\}$, $\mathbf{C}$ be the Cartan 
matrix and
$\mathbf{v}_i:=|\det(\mathbf{C})|\mathbf{C}^{-1}\mathbf{e}_i$.
Thanks to Remark \ref{rmk:vettoriridotti}, in (\ref{eq:conoQ}) we can replace 
each  $\mathbf{v}_i=(v_{i,1},\dots,v_{i,n})^T$
by $\mathbf{w}_i:=\mathbf{v}_i/\gcd(v_{i,1},\dots,v_{i,n})$.
For every $Q\subset \mathcal{S}$, by formula (\ref{eq:pQSS})  we 
have $p_{Q,\mathcal{S},\mathcal{S}}(t)=t^{\ell(w_{Q})-\ell(w_0)} f_Q(t)$ and by 
Lemma \ref{lemma:fQ2} and Proposition \ref{prop:cono} we have
\[f_Q(t)= \sum_{R\supseteq Q} (-1)^{|R|-|Q|}
\frac{\sum_{\mathbf{m}\in \Pi(\mathcal{C}(R))\cap\Z^n} 
	t^{\mathbf{2}\mathbf{m}}}
{\prod_{i\in\mathcal{I}(R)}(1-t^{\mathbf{2}\mathbf{w}_i})} \]
where
\[\Pi(\mathcal{C}(R))=\bigg\{\sum_{i\in\mathcal{I}(R)}\lambda_i\mathbf{w}_i 
\,\Big|\, 
0\leq\lambda_i<1 \,\forall\, i\in\mathcal{I}(R)\bigg\}
\quad\text{and}\quad 
\mathcal{I}(R)=\{i\in\{1,\dots,n\}\,|\,s_{\alpha_i}\in 
\mathcal{S}\setminus R\}.
\]

\begin{rmk}\label{rmk:calcolofQ}
	If $\Pi(\mathcal{C}(R))\cap\Z^n=\{(0,\dots,0)^T\}$ for 
	every $Q\subset R\subset  \mathcal{S}$, we obtain
	\begin{align*}
	f_Q(t)&
	=\frac{1}
	{\prod_{i\in\mathcal{I}(Q)}(1-t^{\mathbf{2}\mathbf{w}_i})}
	\sum_{R\supseteq Q} 
	(-1)^{|R|-|Q|}\prod_{i\in\mathcal{I}(Q)\setminus\mathcal{I}(R)} 
	(1-t^{\mathbf{2}\mathbf{w}_i})\\
	&=\frac{1}
	{\prod_{i\in\mathcal{I}(Q)}(1-t^{\mathbf{2}\mathbf{w}_i})} 
	\bigg(\prod_{i\in\mathcal{I}(Q)} 
	(X-(1-t^{\mathbf{2}\mathbf{w}_i}))\bigg)_{|_{X=1}}\\
	&=\frac{t^{\sum_{i\in\mathcal{I}(Q)}\mathbf{2}\mathbf{w}_i}}
	{\prod_{i\in\mathcal{I}(Q)}(1-t^{\mathbf{2}\mathbf{w}_i})} 
	.\end{align*}
\end{rmk}
After that, using Corollary \ref{corol:razionale2} we can calculate the 
$2^n\times 2^n$ matrix 
$M_{\mathcal{S}}:=\big(p_{Q,J,\mathcal{S}}(t)\big)_{Q\subset 
	\mathcal{S},J\subset \mathcal{S}}$.
Finally, for every $K\subset\mathcal{S}$ we can calculate the polynomial 
matrix 
\[M_{K,\mathcal{S}}:=\Big(p^{\mathcal{S}}_{Q,J,K}(t)\Big)_{Q\subset K,J\subset 
\mathcal{S}}\in \mathrm{Mat}_{2^{|K|}\times 2^{n}}(\Z[t]).\]
Hence, by Proposition \ref{prop:razionale2}, we have
$\big(p_{Q,J,K}(t)\big)_{Q\subset K,J\subset 
	\mathcal{S}}=M_{K,\mathcal{S}}M_{\mathcal{S}}$ and 
 ${}^J 
\widetilde{\mathcal{W}}^K(t)$ is the sum of the $J$-th column of this matrix.

\medskip
In the next paragraphs we will give some examples when the root system  
$\bm\Phi$ is irreducible. 

\subsection{Root system of type $A_{n+1}$}
We have $\mathcal{W}\cong\mathfrak{S}_{n+1}$.
Let $\alpha_i=\mathbf{e}_i-\mathbf{e}_{i+1}$ for every $i\in\{1,\dots,n\}$.
The Cartan matrix is
\[\mathbf{C}=
\begin{pmatrix} 
	2 		& -1 		&  0 		&  \cdots	& 0 		\\
   -1 		&  \ddots	& \ddots	& \ddots	& \vdots  	\\
	0  		& \ddots 	& \ddots 	& \ddots 	& 0	    	\\
	\vdots 	& \ddots 	& \ddots 	& \ddots 	& -1	    \\
	0  		& \cdots 	& 0 		& -1 		& 2	    	\\
\end{pmatrix}
\in M_n(\Z)\]
which has determinant $n+1$ and so we have $\mathbf{v}_i=\sum_{k=1}^n 
\min\{i,k\}(n+1-\max\{i,k\})\mathbf{e}_k$ for every $i\in\{1,\dots n\}$.

\subsubsection{$A_3$}
We have 
$\mathbf{C}=\left(\begin{smallmatrix}2&-1\\-1&2\end{smallmatrix}\right)$, 
$\mathbf{v}_1=\mathbf{w}_1=\left(\begin{smallmatrix}2\\1\end{smallmatrix}\right)$
 and 
$\mathbf{v}_2=\mathbf{w}_2=\left(\begin{smallmatrix}1\\2\end{smallmatrix}\right)$.
Let $Q_1=\{\alpha_1\}$ and $Q_2=\{\alpha_2\}$. 
Then we have 
$\Pi(\mathcal{C}(Q_1))\cap\Z^2=\Pi(\mathcal{C}(Q_2))\cap\Z^2=
\{\left(\begin{smallmatrix}0\\0\end{smallmatrix}\right)\}$ 
and $\Pi(\mathcal{C}(\emptyset))\cap\Z^2=
\{\left(\begin{smallmatrix}0\\0\end{smallmatrix}\right),
\left(\begin{smallmatrix}1\\1\end{smallmatrix}\right),
\left(\begin{smallmatrix}2\\2\end{smallmatrix}\right)
\}$.
We obtain
\[f_{Q_1}(t)=f_{Q_2}(t)=\frac{1}{1-t^6}-1=
\frac{t^6}{1-t^6}
\quad\text{and}\quad f_{\emptyset}(t)
=\frac{1+t^4+t^8}{(1-t^6)^2}-\frac{2}{1-t^6}+1=
\frac{t^4(1-t^2+t^4)}{(1-t^2)(1-t^6)}
\]
and
\[M_{\mathcal{S}}=\frac{1}{(1-t^2)(1-t^6)}
\begin{pmatrix} 
(1+t+t^2)(1+t^3)			& t(1+t+t^4) 	&   
t(1+t+t^4)&  t(1-t^2+t^4)\\
0 	& 1-t^2	& t^4(1-t^2)	&  t^4(1-t^2)	\\
0 	& t^4(1-t^2) & 1-t^2  	& t^4(1-t^2)    	\\
0 	& 0 				& 0 				& 
(1-t^2)(1-t^6)	    				\\
\end{pmatrix}.
\]
Moreover, we have $M_{\emptyset,\mathcal{S}}=
\begin{pmatrix} 
(1+t)(1+t+t^2) & 1+t+t^2	 & 1+t+t^2		& 1
\end{pmatrix}
$ and 
\[ M_{Q_1,\mathcal{S}}=
\begin{pmatrix} 
1+t+t^2		& t	&   1 	&  0\\
0 			& 1	& t^2	& 1	\\
\end{pmatrix},
\quad M_{Q_2,\mathcal{S}}=
\begin{pmatrix} 
1+t+t^2		& 1	&   t 	&  0\\
0 			& t^2	& 1	& 1	\\
\end{pmatrix}.
\]

\subsubsection{$A_4$}
We have 
$\mathbf{C}= 
\left(\begin{smallmatrix}2&-1&0\\-1&2&-1\\0&-1&2\end{smallmatrix}\right)$,
$\mathbf{w}_1= 
\left(\begin{smallmatrix}3\\2\\1\end{smallmatrix}\right)$, 
$\mathbf{w}_2= \left(\begin{smallmatrix}1\\2\\1\end{smallmatrix}\right)$ and 
$\mathbf{w}_3=\left(\begin{smallmatrix}1\\2\\3\end{smallmatrix}\right)$.
Let $Q_1=\{\alpha_1\}$, $Q_2=\{\alpha_2\}$, $Q_3=\{\alpha_3\}$,  
$Q_{12}=\{\alpha_1,\alpha_2\}$, $Q_{13}=\{\alpha_1,\alpha_3\}$ and 
$Q_{23}=\{\alpha_2,\alpha_3\}$. 
Then we have 
\[
\Pi(\mathcal{C}(Q))\cap\Z^3=
\left\{
\begin{array}{ll}
\{(0,0,0)^T\} 				& \text{if }Q\in\{Q_{12},Q_{13},Q_{23}\}\\
\{(0, 0, 0)^T,  (1, 2, 2)^T\}	& \text{if }Q=Q_{1}\\
\{(0, 0, 0)^T, (1, 1, 1)^T, (2, 2, 2)^T, (3, 3, 3)^T\}	& \text{if }Q=Q_{2}\\
\{(0, 0, 0)^T, (2, 2, 1)^T\} 	& \text{if }Q=Q_{3}\\
\Bigg\{
\begin{array}{l}
(0,0,0)^T, (1,1,1)^T, (1,2,2)^T, (2,2,1)^T, \\ 
(2,2,2)^T, (2,3,3)^T, (3,3,2)^T, (3,3,3)^T
\end{array}
\Bigg\} & \text{if }Q=\emptyset.
\end{array}
\right.
\]
We obtain
\[
f_{Q}(t)=
\left\{
\begin{array}{cl}
\displaystyle{\frac{t^{12}}{1-t^{12}}}		& \text{if }Q\in\{Q_{12}, 
Q_{23}\}\vspace{0,2cm}\\
\displaystyle{\frac{t^{8}}{1-t^{8}}}		& \text{if }Q=Q_{13}\vspace{0,2cm}\\
\displaystyle{\frac{t^{10} (1+t^{10})}{(1-t^{12})(1-t^8)}} & \text{if 
}Q\in\{Q_{1},Q_{3}\}\vspace{0,2cm}\\
\displaystyle{\frac{t^6(1+t^6+t^{12}+t^{18})}{(1-t^{12})^2}}	& 
\text{if }Q=Q_{2}\vspace{0,2cm}\\
\displaystyle{\frac{t^{14} (1+2t^2+t^{12})}{(1-t^{12})(1-t^8)(1-t^6)}}	& 
\text{if }Q=\emptyset.
\end{array}
\right.
\]

\subsection{Root system of type $B_{n}$}
We have $\mathcal{W}\cong\mathfrak{S}_{n}\ltimes (\Z/2\Z)^n$.
Let $\alpha_i=\mathbf{e}_i-\mathbf{e}_{i+1}$ for every $i\in\{1,\dots,n-1\}$ 
and $\alpha_n=\mathbf{e}_n$.
The Cartan matrix is
\[\mathbf{C}=
\begin{pmatrix} 
2 		& -1 		&  0 		&  \cdots	& 0 	 & 0		\\
-1 		&  \ddots	& \ddots	& \ddots	& \vdots & \vdots  	\\
0  		& \ddots 	& \ddots 	& \ddots 	& 0	     & \vdots	\\
\vdots 	& \ddots 	& \ddots 	& 2		 	& -1	 & 0   		\\
0  		& \cdots 	& 0 		& -1 		& 2	     &-2		\\
0  		& \cdots 	& \cdots	& 0 		& -1	 &2		\\
\end{pmatrix}
\in M_n(\Z)\]
which has determinant $2$.
Hence, for every $i\in\{i,\dots,n\}$, we have 
\[
\mathbf{w}_i=
\left\{
\begin{array}{ll}
\displaystyle{\sum_{k=1}^{i-1}2k\mathbf{e}_k+\sum_{k=i}^{n-1}2i\mathbf{e}_k 
+i\mathbf{e}_n=(2,4,\dots,2(i-1),2i,\dots,2i,i)^T}		& \text{if } i \text{ 
odd}\vspace{0,2cm}\\
\displaystyle{\sum_{k=1}^{i-1}k\mathbf{e}_k+\sum_{k=i}^{n-1}i\mathbf{e}_k 
+\frac{i}{2}\mathbf{e}_n=(1,2,\dots,i-1,i,\dots,i,i/2)^T}		
& \text{if } i \text{ even}.
\end{array}
\right.
\]
which implies $\mathbf{2} 
\mathbf{w}_i=i(2n-i)$ if $i$ odd and  
$\mathbf{2} \mathbf{w}_i=i(2n-i)/2$ if $i$ even.

\subsubsection{$B_3$}
We have 
$\mathbf{C}= 
\left(\begin{smallmatrix}2&-1&0\\-1&2&-2\\0&-1&2\end{smallmatrix}\right)$,
$\mathbf{w}_1= 
\left(\begin{smallmatrix}2\\2\\1\end{smallmatrix}\right)$, 
$\mathbf{w}_2= \left(\begin{smallmatrix}1\\2\\1\end{smallmatrix}\right)$ and 
$\mathbf{w}_3=\left(\begin{smallmatrix}2\\4\\3\end{smallmatrix}\right)$.
Let $Q_1=\{\alpha_1\}$, $Q_2=\{\alpha_2\}$, $Q_3=\{\alpha_3\}$,  
$Q_{12}=\{\alpha_1,\alpha_2\}$, $Q_{13}=\{\alpha_1,\alpha_3\}$ and 
$Q_{23}=\{\alpha_2,\alpha_3\}$. 
Then we have 
\[
\Pi(\mathcal{C}(Q))\cap\Z^3=
\left\{
\begin{array}{ll}
\{(0,0,0)^T\} 				& \text{if }Q\in\{Q_1,Q_3,Q_{12},Q_{13},Q_{23}\}\\
\{(0, 0, 0)^T,  (2, 3, 2)^T\}	& \text{if }Q\in\{\emptyset, Q_2\}
\end{array}
\right.
\]
Hence, we have
\[f_{\emptyset}(t)=\frac{t^{22}(1+t^{14})}{(1-t^8)(1-t^{10})(1-t^{18})}
\quad \text{ and 
}\quad
f_{Q_{2}}(t)=\frac{t^{14}(1+t^{14})}{(1-t^{10})(1-t^{18})}
\]
and, using Remark \ref{rmk:calcolofQ}, we can easily compute $f_Q(t)$ for 
$Q\in\{Q_1,Q_3,Q_{12},Q_{13},Q_{23}\}$.

\subsection{Root system of type $C_{n}$}
We have $\mathcal{W}\cong\mathfrak{S}_{n}\ltimes (\Z/2\Z)^n$.
Let $\alpha_i=\mathbf{e}_i-\mathbf{e}_{i+1}$ for every $i\in\{1,\dots,n-1\}$ 
and $\alpha_n=2\mathbf{e}_n$.
The Cartan matrix is
\[\mathbf{C}=
\begin{pmatrix} 
2 		& -1 		&  0 		&  \cdots	& 0 	 & 0		\\
-1 		&  \ddots	& \ddots	& \ddots	& \vdots & \vdots  	\\
0  		& \ddots 	& \ddots 	& \ddots 	& 0	     & \vdots	\\
\vdots 	& \ddots 	& \ddots 	& 2		 	& -1	 & 0   		\\
0  		& \cdots 	& 0 		& -1 		& 2	     &-1		\\
0  		& \cdots 	& \cdots	& 0 		& -2	 &2		\\
\end{pmatrix}
\in M_n(\Z)\]
which has determinant $2$.
Hence, for every $i\in\{i,\dots,n\}$, we have 
$\mathbf{w}_i=\sum_{k=1}^{i-1} 
k\mathbf{e}_k+\sum_{k=i}^{n} 
i\mathbf{e}_k=(1,2,\dots,i,\dots,i)^T$
which implies $\mathbf{2} 
\mathbf{w}_i=i(2n+1-i)$ and $\Pi(\mathcal{C}(Q))\cap\Z^n=\{(0,\dots,0)^T\}$ for 
every $Q\subset \mathcal{S}$. 
Hence, using Remark \ref{rmk:calcolofQ}, we can easily compute $f_Q(t)$ for 
every $Q\subset \mathcal{S}$.

\subsection{Root system of type $F_{4}$}
We have $\mathcal{W}\cong\mathfrak{S}_{3}\ltimes(\mathfrak{S}_{4}\ltimes 
(\Z/2\Z)^2)$.
Let $\alpha_1=\mathbf{e}_2-\mathbf{e}_{3}$, 
$\alpha_1=\mathbf{e}_3-\mathbf{e}_{4}$, $\alpha_3=\mathbf{e}_4$ and 
$\alpha_4=\frac{1}{2}(\mathbf{e}_1-\mathbf{e}_{2}-\mathbf{e}_{3}-\mathbf{e}_{4})$.
The Cartan matrix is
\[\mathbf{C}=
\begin{pmatrix} 
2 		& -1 		&  0 	 & 0		\\
-1 		&  2	 	& -2	 & 0   		\\
0  		& -1 		&  2	 & -1		\\
0  		& 0 		& -1	 & 2		\\
\end{pmatrix}
\]
which has determinant $1$.
Hence, we have 
$\mathbf{w}_1=(2,3,2,1)^T$,
$\mathbf{w}_2=(3,6,4,2)^T$,
$\mathbf{w}_3=(4,8,6,3)^T$ and
$\mathbf{w}_4=(2,4,3,2)^T$
which implies $\Pi(\mathcal{C}(Q))\cap\Z^4=\{(0,0,0,0)^T\}$ for 
every $Q\subset \mathcal{S}$. 
Hence, using Remark \ref{rmk:calcolofQ}, we can easily calculate $f_Q(t)$ for 
every $Q\subset \mathcal{S}$.

\subsection{Root system of type $G_{2}$}
We have $\mathcal{W}\cong D_6$, the dihedral group of order $12$.
Let $\alpha_1=-2\mathbf{e}_1+\mathbf{e}_2+\mathbf{e}_3$ and 
$\alpha_2=\mathbf{e}_1-\mathbf{e}_2$.
The Cartan matrix is
$\mathbf{C}=\left(\begin{smallmatrix}2&-3\\-1&2\end{smallmatrix}\right)$
which has determinant $1$. 
Hence, we have 
$\mathbf{w}_1=\left(\begin{smallmatrix}2\\1\end{smallmatrix}\right)$
and 
$\mathbf{w}_2=\left(\begin{smallmatrix}3\\2\end{smallmatrix}\right)$which 
implies $\Pi(\mathcal{C}(Q))\cap\Z^2= 
\{\left(\begin{smallmatrix}0\\0\end{smallmatrix}\right)\}$ for 
every $Q\subset \mathcal{S}$. 
Hence, using Remark \ref{rmk:calcolofQ}, we can easily compute $f_Q(t)$ for 
every $Q\subset \mathcal{S}$.

\bibliographystyle{plain}
\bibliography{Poincare}

\end{document}